\newtheorem{theorem}{Theorem}[section]
\newtheorem{corollary}[theorem]{Corollary}
\newtheorem{lemma}[theorem]{Lemma}
\newtheorem{definition}[theorem]{Definition}
\newtheorem{remark}[theorem]{Remark}
\newtheorem{example}[theorem]{Example}
\numberwithin{equation}{section}
\DeclareMathOperator*{\rep}{Rep}
\DeclareMathOperator*{\seq}{Seq}
\DeclareMathOperator*{\res}{Res}
\begin{document}

\title{On the minimal affinizations over the quantum affine algebras of type $C_n$}
\author{XIN-YANG FENG , JIAN-RONG LI , YAN-FENG LUO$^{*}$}

\address{Xin-Yang Feng: Department of Mathematics, Lanzhou University, Lanzhou 730000, P. R. China.}
\email{fengxy12@lzu.edu.cn}
\address{Jian-Rong Li: Department of Mathematics, Lanzhou University, Lanzhou 730000, P. R. China.}
\email{lijr@lzu.edu.cn, lijr07@gmail.com}
\address{Yan-Feng Luo: Department of Mathematics, Lanzhou University, Lanzhou 730000, P. R. China.}
\email{luoyf@lzu.edu.cn}

\date{}

\maketitle\footnotetext[1]{Corresponding author.}

\begin{abstract}
In this paper, we study the minimal affinizations over the quantum affine algebras of type $C_n$ by using the theory of cluster algebras. We show that the $q$-characters of a large family of minimal affinizations of type $C_n$ satisfy some systems of equations. These equations correspond to mutation equations of some cluster algebras. Furthermore, we show that the minimal affinizations in these equations correspond to cluster variables in these cluster algebras.

\hspace{0.15cm}

\noindent
{\bf Key words}: quantum affine algebras of type $C_n$; cluster algebras; minimal affinizations; $q$-characters; Frenkel-Mukhin algorithm

\hspace{0.15cm}

\noindent
{\bf 2010 Mathematics Subject Classification}: 17B37; 13F60
\end{abstract}

\section{Introduction}
Quantum groups are introduced independently by Jimbo \cite{Jim85} and Drinfeld \cite{Dri87}. Quantum affine algebras $U_q\hat{\mathfrak{g}}$ form a family of infinite-dimensional quantum groups. Although the representation theory of quantum affine algebras are studied intensively over the past few decades, the structure of the category $\rep(U_q\hat{\mathfrak{g}})$ of finite dimensional $U_q\hat{\mathfrak{g}}$-modules is far from being understood. For example, general decomposition formulas for tensor products of simple $U_q\hat{\mathfrak{g}}$-modules have not been found. The Weyl character formula for general $U_q\hat{\mathfrak{g}}$-modules has not been found.

Fomin and Zelevinsky in \cite{FZ02} introduced the theory of cluster algebras. It has many applications including quiver representations, Teichm\"{u}ller theory, tropical geometry, integrable systems, and Poisson geometry.

The aim of this paper is to apply the theory of cluster algebras to study the minimal affinizations over the quantum affine algebras of type $C_n$.

Let $\mathfrak{g}$ be a simple Lie algebra and $U_q \hat{\mathfrak{g}}$ the corresponding quantum affine algebra. Hernandez and Leclerc found a remarkable connection between cluster algebras and finite dimensional representations of $U_q \hat{\mathfrak{g}}$ in \cite{HL10}. They show that the Grothendieck ring of some subcategories of the category of all finite dimensional representations of $U_q \hat{\mathfrak{g}}$ have cluster algebra structures. In the paper \cite{HL13}, they apply the theory of cluster algebras to study the $q$-characters a family of $U_q \hat{\mathfrak{g}}$-modules called Kirillov-Reshetkhin modules.

The family of minimal affinizations over $U_q \hat{\mathfrak{g}}$ is an important family of simple $U_q \hat{\mathfrak{g}}$-modules which was introduced in \cite{C95}. The family of minimal affinizations contains the Kirillov-Reshetikhin modules. Minimal affinizations are studied intensively in recent years, see for examples, \cite{CG11}, \cite{CMY13}, \cite{Her07}, \cite{Li15}, \cite{LM13}, \cite{M10}, \cite{MP07}, \cite{MP11}, \cite{MY12a}, \cite{MY12b}, \cite{MY14}, \cite{Nao13}, \cite{Nao14}, \cite{QL14}, \cite{SS14}, \cite{ZDLL15}.

M-systems and dual M-systems of types $A_{n}$, $B_{n}$, $G_{2}$ are introduced in \cite{ZDLL15}, \cite{QL14} to study the minimal affinizations of types $A_{n}$, $B_{n}$, $G_{2}$. Recently, two closed systems which contain a large family of minimal affinizations of type $F_4$ are studied in \cite{DLL15}. The equations in these systems are satisfied by the $q$-characters of minimal affinizations of types $A_{n}$, $B_{n}$, $G_{2}$ and $F_4$. The minimal affinizations of type $C_n$ are much more complicated than the minimal affinizations of types $A_{n}$, $B_{n}$, $G_{2}$. In types $A_{n}$, $B_{n}$, $G_{2}$, all minimal affinizations are special or anti-special, \cite{Her07}, \cite{LM13}. But in type $C_n$, there are minimal affinizations which are neither special nor anti-special, \cite{Her07}. Here a $U_q \hat{\mathfrak{g}}$-module $V$ is called special (resp. anti-special) if there is only one dominant (resp. anti-dominant) monomial in the $q$-character of $V$.

In this paper, we find two closed systems which contain a large family of minimal affinizations of type $C_n$, Theorem \ref{M system of type Cn}, Theorem \ref{dual-M-system}. The equations in these closed systems are of the form
\[
[\mathcal T_1][\mathcal T_2]=[\mathcal T_3][\mathcal T_4]+[\mathcal S],
\]
where $\mathcal T_i$, $i\in \{1,2,3,4\}$, are minimal affinizations of type $C_n$ and $\mathcal S$ is some simple module. We show these equations are satisfied by the $q$-characters of the modules in the systems. We prove that the modules in the system in Theorem \ref{M system of type Cn} are special (Theorem \ref{The special modules of $C_n$}) and the modules in the system in Theorem \ref{dual-M-system} are anti-special (Theorem \ref{anti-special}).

Moreover, we show that every equation in Theorem \ref{M system of type Cn} (resp. Theorem \ref{dual-M-system}) corresponds to a mutation equation in some cluster algebra $\mathscr{A}$ (resp. $\mathscr{\widetilde{A}}$). Furthermore, every minimal affinization in Theorem \ref{M system of type Cn} (resp. Theorem \ref{dual-M-system}) corresponds to a cluster variable in the cluster algebra $\mathscr{A}$ (resp. $\mathscr{\widetilde{A}}$), Theorem \ref{connection with cluster algebra 1} (resp. Theorem \ref{minimal affinizations correspond to cluster variablesII}). Therefore we have verified that the Hernandez-Leclerc conjecture (Conjecture 13.2 in \cite{HL10} and Conjecture 9.1 in \cite{Le10}) is true for the minimal affinizations in Theorem \ref{M system of type Cn} and Theorem \ref{dual-M-system}.

The equations in Theorem \ref{M system of type Cn} and Theorem \ref{dual-M-system} can be used to compute the $q$-characters of the modules in Theorem \ref{M system of type Cn} and Theorem \ref{dual-M-system}. By using these equations, we can also obtain the ordinary characters of the modules in Theorem \ref{M system of type Cn} and Theorem \ref{dual-M-system}. A $U_q \hat{\mathfrak{g}}$-module is also a $U_q \mathfrak{g}$-module since $U_q \mathfrak{g}$ is isomorphic to a subalgebra of $U_q \hat{\mathfrak{g}}$. Usually a simple $U_q \hat{\mathfrak{g}}$-module $V$ is not simple when $V$ is considered as a $U_q \mathfrak{g}$-module. Let $V$ be a minimal affinization in Theorem \ref{M system of type Cn} and Theorem \ref{dual-M-system}. The equations in Theorem \ref{M system of type Cn} and Theorem \ref{dual-M-system} can be used to compute the decomposition formula of $V$ when we decompose $V$ into simple $U_q \mathfrak{g}$-modules.

In \cite{Li15}, the minimal affinizations over $U_q \hat{\mathfrak{g}}$ of type $C_3$ is studied by using the method of extended T-systems. The extended T-system of type $C_3$ is much more complicated than the systems in Theorem \ref{M system of type Cn} and Theorem \ref{dual-M-system}. The extended T-system of type $C_3$ contains not only minimal affinizations but also some other kinds of modules.

Let us describe the organization of the present paper. In Section \ref{background}, we give some background information about cluster algebras and representation theory of quantum affine algebras. In Section \ref{main results}, we write a system of equations satisfied by a large family of minimal affinizations of type $C_n$ explicitly, Theorem \ref{M system of type Cn}. In Section \ref{Relation between the M-systems and cluster algebras}, we study the relationship between the system in Theorem \ref{M system of type Cn} and cluster algebras. In Section \ref{dual M system}, we study the dual system of the system in Theorem \ref{M system of type Cn}. In Section \ref{proof of special}, Section \ref{proof main1} and Section \ref{proof irreducible}, we prove Theorem \ref{The special modules of $C_n$}, Theorem \ref{M system of type Cn} and Theorem \ref{irreducible} given in Section \ref{main results}, respectively.

\section{Background}\label{background}
\subsection{Cluster algebras}
Fomin and Zelevinsky in \cite{FZ02} introduced the theory of cluster algebras. Let $\mathbb{Q}$ be the field of rational numbers and $\mathcal{F}=\mathbb{Q}(x_{1}, x_{2}, \cdots, x_{n})$ the field of rational functions.
A seed in $\mathcal{F}$ is a pair $\Sigma=({\bf y}, Q)$, where ${\bf y} = (y_{1}, y_{2}, \cdots, y_{n})$ is a free generating set of $\mathcal{F}$, and $Q$ is a quiver with vertices labeled by $\{1, 2, \cdots, n\}$. Assume that $Q$ has neither loops nor $2$-cycles. For $k=1, 2, \cdots, n$, one defines a mutation $\mu_k$ by $\mu_k({\bf y}, Q) = ({\bf y}', Q')$. Here ${\bf y}' = (y_1', \ldots, y_n')$, $y_{i}'=y_{i}$, for $i\neq k$, and
\begin{equation}
y_{k}'=\frac{\prod_{i\rightarrow k} y_{i}+\prod_{k\rightarrow j} y_{j}}{y_{k}}, \label{exchange relation}
\end{equation}
where the first (resp. second) product in the right hand side is over all arrows of $Q$ with target (resp. source) $k$, and $Q'$ is obtained from $Q$ by
\begin{enumerate}
\item[(i)] adding a new arrow $i\rightarrow j$ for every existing pair of arrow $i\rightarrow k$ and $k\rightarrow j$;

\item[(ii)] reversing the orientation of every arrow with target or source equal to $k$;

\item[(iii)] erasing every pair of opposite arrows possible created by (i).
\end{enumerate}
The mutation class $\mathcal{C}(\Sigma)$ is the set of all seeds obtained from $\Sigma$ by a finite sequence of mutation $\mu_{k}$. If $\Sigma'=((y_{1}', y_{2}', \cdots, y_{n}'), Q')$ is a seed in $\mathcal{C}(\Sigma)$, then the subset $\{y_{1}', y_{2}', \cdots, y_{n}'\}$ is called a $cluster$, and its elements are called \textit{cluster variables}. The \textit{cluster algebra} $\mathscr{A}_{\Sigma}$ as the subring of $\mathcal{F}$ generated by all cluster variables. \textit{Cluster monomials} are monomials in the cluster variables supported on a single cluster.

In this paper, the initial seed in the cluster algebra we use is of the form $\Sigma=({\bf y}, Q)$, where ${\bf y}$ is an infinite set and $Q$ is an infinite quiver.

\begin{definition}[{Definition 3.1, \cite{GG14}}] \label{definition of cluster algebras of infinite rank}
Let $Q$ be a quiver without loops or $2$-cycles and with a countably infinite number of vertices labelled by all integers $i \in \mathbb{Z}$. Furthermore, for each vertex $i$ of $Q$ let the
number of arrows incident with $i$ be finite. Let ${\bf y} = \{y_i \mid i \in \mathbb{Z}\}$. An infinite initial seed is the pair $({\bf y}, Q)$. By finite sequences of mutation
at vertices of $Q$ and simultaneous mutation of the set ${\bf y}$ using the exchange relation (\ref{exchange relation}), one obtains a family of infinite seeds. The sets of variables in these seeds are called
the infinite clusters and their elements are called the cluster variables. The cluster algebra of
infinite rank of type $Q$ is the subalgebra of $\mathbb{Q}({\bf y})$ generated by the cluster variables.
\end{definition}

\subsection{Quantum affine algebras of type $C_n$}

Let $\mathfrak{g}$ be a simple Lie algebra of type $C_n$ and $I=\{1, \ldots, n\}$ the indices of the Dynkin diagram of $\mathfrak{g}$ (we use the same labeling of the vertices of the Dynkin diagram of $\mathfrak{g}$ as the one used in \cite{Car05}). Let $C=(C_{ij})_{i,j\in I}$ be the Cartan matrix of $\mathfrak{g}$, where $C_{ij}=\frac{2 ( \alpha_i, \alpha_j ) }{( \alpha_i, \alpha_i )}$.

The quantum affine algebra $U_q \hat{\mathfrak{g}}$ in Drinfeld's new realization, see \cite{Dri88}, is generated by $x_{i, n}^{\pm}$ ($i\in I, n\in \mathbb{Z}$), $k_i^{\pm 1}$ $(i\in I)$, $h_{i, n}$ ($i\in I, n\in \mathbb{Z}\backslash \{0\}$) and central elements $c^{\pm 1/2}$, subject to certain relations.

\subsection{Finite-dimensional $U_q \hat{\mathfrak{g}}$-modules and their $q$-characters} In this section, we recall the standard facts about finite-dimensional $U_q\hat{\mathfrak{g}}$-modules and their $q$-characters, see \cite{CP94}, \cite{CP95a}, \cite{FR98}, \cite{MY12a}.

Let $\mathcal{P}$ be the free abelian multiplicative group of monomials in infinitely many formal variables $(Y_{i, a})_{i\in I, a\in \mathbb{C}^{\times}}$ and let $\mathbb{Z}\mathcal{P} = \mathbb{Z}[Y_{i, a}^{\pm 1}]_{i\in I, a\in \mathbb{C}^{\times}}$ be the group ring of $\mathcal{P}$.

For each $j\in I$, a monomial $m=\prod_{i\in I, a\in \mathbb{C}^{\times}} Y_{i, a}^{u_{i, a}} \in \mathcal{P}$, where $u_{i, a}$ are some integers, is said to be \textit{$j$-dominant} (resp. \textit{$j$-anti-dominant}) if and only if $u_{j, a} \geq 0$ (resp. $u_{j, a} \leq 0$) for all $a\in \mathbb{C}^{\times}$. A monomial is called \textit{dominant} (resp. \textit{anti-dominant}) if and only if it is $j$-dominant (resp. $j$-anti-dominant) for all $j\in I$. Let $\mathcal{P}^+ \subset \mathcal{P}$ denote the set of all dominant monomials.

A finite-dimensional $U_q \hat{\mathfrak{g}}$-module is parameterized by a dominant monomial in $\mathbb{Z}\mathcal{P}$. Given a dominant monomial $m$, there is a unique finite-dimensional $U_q \hat{\mathfrak{g}}$-module $L(m)$, \cite{CP94}, \cite{CP95a}.

The theory of $q$-characters is introduced in \cite{FR98}. The $q$-character of a $U_q\hat{\mathfrak{g}}$-module $V$ is given by
\begin{align*}
\chi_q(V) = \sum_{m\in \mathcal{P}} \dim(V_{m}) m \in \mathbb{Z}\mathcal{P},
\end{align*}
where $V_m$ is the $l$-weight space with $l$-weight $m$, see \cite{FR98}. We use $\mathscr{M}(V)$ to denote the set of all monomials in $\chi_q(V)$ for a  finite-dimensional $U_q\hat{\mathfrak{g}}$-module $V$. For $m_+ \in \mathcal{P}^+$, we use $\chi_q(m_+)$ to denote $\chi_q(L(m_+))$. We also write $m \in \chi_q(m_+)$ if $m \in \mathscr{M}(\chi_q(m_+))$.

The following lemma is well-known.
\begin{lemma}
\label{contains in a larger set}
Let $m_1, m_2$ be two monomials. Then $L(m_1m_2)$ is a sub-quotient of $L(m_1) \otimes L(m_2)$. In particular, $\mathscr{M}(L(m_1m_2)) \subseteq \mathscr{M}(L(m_1))\mathscr{M}(L(m_2))$.   $\Box$
\end{lemma}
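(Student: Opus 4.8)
The plan is to first realize $L(m_1m_2)$ as a sub-quotient of $L(m_1)\otimes L(m_2)$ by producing a highest-$l$-weight vector of the correct $l$-weight, and then transport the statement to $q$-characters using multiplicativity in the Grothendieck ring.

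First I would fix a highest-$l$-weight vector $v_i\in L(m_i)$ for $i=1,2$, i.e. a vector annihilated by all $x_{j,n}^{+}$ which is a simultaneous eigenvector for the $k_j^{\pm 1}$ and the $h_{j,n}$ with eigenvalues encoded by $m_i$. The key structural input is that a tensor product of highest-$l$-weight modules is generated, as a module, by the tensor product of highest-$l$-weight vectors, which is itself a highest-$l$-weight vector; this follows from the standard coproduct on $U_q\hat{\mathfrak g}$ together with the triangularity of the Drinfeld generators with respect to the $l$-weight partial order, as established in \cite{CP94}, \cite{CP95a}. Thus $v_1\otimes v_2\in L(m_1)\otimes L(m_2)$ is a highest-$l$-weight vector with $l$-weight $m_1m_2$, and the submodule $W:=U_q\hat{\mathfrak g}\cdot(v_1\otimes v_2)$ is a finite-dimensional highest-$l$-weight module with dominant highest $l$-weight $m_1m_2$. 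Such a module has a unique maximal proper submodule $W'$, and by the classification of finite-dimensional simple $U_q\hat{\mathfrak g}$-modules recalled above we have $W/W'\cong L(m_1m_2)$. Hence $L(m_1m_2)$ is a sub-quotient of $L(m_1)\otimes L(m_2)$.

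For the inclusion of monomial sets, I would use that $\chi_q$ is an injective ring homomorphism from the Grothendieck ring $\rep(U_q\hat{\mathfrak g})$ to $\mathbb Z\mathcal P$, so $\chi_q(L(m_1)\otimes L(m_2))=\chi_q(L(m_1))\chi_q(L(m_2))$, and in particular $\mathscr M(L(m_1)\otimes L(m_2))=\mathscr M(L(m_1))\mathscr M(L(m_2))$. Since $\chi_q$ is additive on short exact sequences, the $q$-character of any module is a non-negative integer combination of the $q$-characters of its composition factors; therefore every monomial occurring in a sub-quotient of $L(m_1)\otimes L(m_2)$ already occurs in $\chi_q(L(m_1)\otimes L(m_2))$. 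Applying this to the sub-quotient $L(m_1m_2)$ yields $\mathscr M(L(m_1m_2))\subseteq\mathscr M(L(m_1))\mathscr M(L(m_2))$.

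The only genuinely non-formal step is the assertion that $v_1\otimes v_2$ generates a highest-$l$-weight module with highest $l$-weight exactly $m_1m_2$; this is precisely where one needs the explicit behaviour of the coproduct on the Drinfeld generators and the fact that its correction terms are upper triangular for the $l$-weight order. I expect this to be the main point to cite carefully rather than reprove, everything else being bookkeeping in $\rep(U_q\hat{\mathfrak g})$.
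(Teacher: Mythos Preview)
Your argument is correct and is the standard proof of this well-known fact; the paper itself does not supply a proof but merely records the lemma as known (the $\Box$ appears immediately after the statement). The one point you flag---that $v_1\otimes v_2$ is a highest-$l$-weight vector of $l$-weight $m_1m_2$---is indeed the only nontrivial ingredient, and it is precisely what is established in the Chari--Pressley references \cite{CP94}, \cite{CP95a} via the approximate formulas for the coproduct on Drinfeld generators; everything else is formal, as you say.
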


A finite-dimensional $U_q\hat{\mathfrak{g}}$-module $V$ is said to be \textit{special} if and only if $\mathscr{M}(V)$ contains exactly one dominant monomial. It is called \textit{anti-special} if and only if $\mathscr{M}(V)$ contains exactly one anti-dominant monomial. It is said to be \textit{prime} if and only if it is not isomorphic to a tensor product of two non-trivial $U_q\hat{\mathfrak{g}}$-modules, see \cite{CP97}. Clearly, if a module is special or anti-special, then it is simple.

The elements $A_{i, a} \in \mathcal{P}, i\in I, a\in \mathbb{C}^{\times}$, are defined by
\begin{align*}
A_{i, a} = Y_{i, aq_{i}}Y_{i, aq_{i}^{-1}} \prod_{C_{ji}=-1}Y_{j, a}^{-1} \prod_{C_{ji}=-2}Y_{j, aq}^{-1}Y_{j, aq^{-1}}^{-1}\prod_{C_{ji}=-3}Y_{j, aq^{2}}^{-1}Y_{j, a}^{-1}Y_{j, aq^{-2}}^{-1},
\end{align*} see Section 2.3 in \cite{FM01}.

Let $\mathcal{Q}$ be the subgroup of $\mathcal{P}$ generated by $A_{i, a}, i\in I, a\in \mathbb{C}^{\times}$. Let $\mathcal{Q}^{\pm}$ be the monoids generated by $A_{i, a}^{\pm 1}, i\in I, a\in \mathbb{C}^{\times}$. There is a partial order $\leq$ on $\mathcal{P}$ defined as follows:
\begin{align}
m\leq m' \text{ if and only if } m'm^{-1}\in \mathcal{Q}^{+}. \label{partial order of monomials}
\end{align}
For all $m_+ \in \mathcal{P}^+$, $\mathscr{M}(L(m_+)) \subset m_+\mathcal{Q}^{-}$, see \cite{FM01}.

We will need the concept of \textit{right negative} which is introduced in Section 6 of \cite{FM01}.
\begin{definition}
A monomial m is called \textit{right negative} if for all $a \in \mathbb{C}^{\times}$, for $L= \max\{l\in \mathbb{Z}\mid u_{i,aq^{l}}(m)\neq 0 \text{ for some i $\in I$}\}$ we have $u_{j,aq^{L}}(m)\leq 0$ for $j\in I$.
\end{definition}
For $i\in I, a\in \mathbb{C}^{\times}$, $A_{i,a}^{-1}$ is right-negative. A product of right-negative monomials is right-negative. If $m$ is right-negative and $m'\leq m$, then $m'$ is right-negative, see \cite{FM01}, \cite{Her06}.

As $A^{-1}_{i,a}$ are algebraically independent, for $M$ a product of $A^{-1}_{i,a}$ one can define $v_{i,a}(M)\geq0$ by $M=\prod_{i\in I,a\in \mathbb{C}^{\times}}A^{-v_{i,a}(m)}_{i,a}$. Let $v_{i}(M)=\sum_{a\in \mathbb{C}^{\times}}v_{i,a}(M)$ and $v(M)=\sum_{i\in I}v_{i}(M)$, \cite{Her08}.

For $J\subset I$, we denote by $U_q\mathfrak{\hat{g}}_{J}\subset U_q\mathfrak{\hat{g}}$ the subalgebra generated by the $x^{\pm}_{i,m}, h_{i,m}, k^{\pm}_{i}$ for $i\in J$. For $i\in I$, we denote $U_q\mathfrak{\hat{g}}_{i}=U_q\mathfrak{\hat{g}}_{\{i\}}\simeq U_{q_{i}} {\mathfrak{\hat{sl}}_{2}}$.

The following theorem is useful to show that some monomial is not in the $q$-character of a $U_q\mathfrak{\hat{g}}$-module.

\begin{theorem}[{Theorem 5.1, \cite{Her08}}]\label{Elimination Theorem}
Let $V=L(m)$ be a simple $U_q\mathfrak{\hat{g}}$-module. Suppose that $m'<m$ and $i\in I$ satisfy the following conditions:
\begin{enumerate}[(i)]
\item there is a unique $i$-dominant $M\in (\mathscr{M}(V)\cap m'\mathbb{Z}[A_{i,a}]_{a\in \mathbb{C}^{\times}})-\{m'\}$ and its coefficient is $1$,

\item $\sum_{r\in \mathbb{Z}}x^{+}_{i,r}(V_M)=\{0\}$,

\item $m'$ is not a monomial of $L_i(M)$,

\item if $m''\in \mathscr{M}(U_q\mathfrak{\hat{g}}_iV_M)$ is i-dominant, then $v(m''m^{-1})\geq v(m'm^{-1})$,

\item for all $j\neq i,\  \{m''\in \mathscr{M}(V)\mid v(m''m^{-1})< v(m'm^{-1})\}\cap m'\mathbb{Z}[A^{\pm1}_{j,a}]_{a\in \mathbb{C}^{\times}}=\emptyset$.
\end{enumerate}
Then $m'\notin \mathscr{M}(V)$.
\end{theorem}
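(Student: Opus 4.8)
The plan is to reduce the statement to the (completely explicit) representation theory of $U_{q_i}\hat{\mathfrak{sl}}_2$, accessed through the subalgebra $U_q\hat{\mathfrak{g}}_i\simeq U_{q_i}\hat{\mathfrak{sl}}_2$. Restricting $V=L(m)$ to $U_q\hat{\mathfrak{g}}_i$, the set $\mathscr{M}(V)$ is covered by $\hat{\mathfrak{sl}}_2$-strings, and every monomial of $\mathscr{M}(V)$ that is not $i$-dominant is obtained, by the usual $\hat{\mathfrak{sl}}_2$ lowering procedure, from some $i$-dominant monomial of $\mathscr{M}(V)$ at the top of its string; equivalently it lies in $\mathscr{M}(L_i(M''))$ for that top monomial $M''$. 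The five hypotheses are designed so that, were $m'$ to occur in $\mathscr{M}(V)$, the only possible top monomial would be $M$, and $m'\in\mathscr{M}(L_i(M))$ is exactly what (iii) forbids.

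So I would argue by contradiction: assume $m'\in\mathscr{M}(V)$. The first task is to show that $m'$ must be produced in the $i$-direction rather than in a direction $j\ne i$. This is where hypothesis (v) is used, together with $\mathscr{M}(V)\subset m\mathcal{Q}^{-}$ and the right-negativity of the $A_{j,a}^{-1}$: (v) prevents $m'$ from sitting properly inside a $j$-string ($j\ne i$) whose top is strictly closer to $m$ than $m'$ in the filtration by $v(\,\cdot\,m^{-1})$, which is precisely the configuration that would create $m'$ at a neighbouring node. Granting this, $m'$ is non-$i$-dominant, and in the $U_q\hat{\mathfrak{g}}_i$-restriction it lies below the $i$-dominant top $M''$ of its string; hypotheses (ii) and (iv) are then used to control the $U_q\hat{\mathfrak{g}}_i$-submodule generated by the higher $\ell$-weight spaces (condition (ii) says the space $V_M$ consists of $\ell$-highest weight vectors, so it generates a sum of copies of $L_i(M)$; condition (iv) pins down the $v$-distances so that nothing overshoots $m'$), with the upshot that $M''$ satisfies all the requirements of hypothesis (i). Hence $M''=M$, so $m'\in\mathscr{M}(L_i(M))$, contradicting (iii).

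The main obstacle is making the two exclusion steps rigorous: showing, via careful bookkeeping with the functions $v_{i,a}$, $v_i$ and $v$ and repeated use of right-negativity and of $\mathscr{M}(L(m))\subset m\mathcal{Q}^{-}$, that hypothesis (v) really does kill off every route to $m'$ through a node $j\ne i$, and that hypotheses (ii) and (iv) really do force the $i$-dominant source of $m'$ to be $M$ and not some other $i$-dominant monomial of $\mathscr{M}(V)$. Once these are in place the $\hat{\mathfrak{sl}}_2$ combinatorics (together with Lemma \ref{contains in a larger set}) finish the argument.
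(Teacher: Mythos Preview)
The paper does not prove this theorem at all: it is quoted verbatim as Theorem~5.1 of \cite{Her08} and used as a black box (in Section~\ref{proof of special}, Case~3.1). So there is no in-paper proof to compare your proposal against.

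That said, your sketch is essentially the strategy of Hernandez's original argument in \cite{Her08}: decompose $V$ as a $U_q\hat{\mathfrak g}_i$-module, use (v) to rule out reaching $m'$ from a $j$-direction with $j\neq i$, then use (i), (ii), (iv) to identify the unique $i$-dominant source of $m'$ as $M$, and conclude by (iii). One point to be more precise about: condition (ii) is exactly what guarantees that the $U_q\hat{\mathfrak g}_i$-submodule generated by $V_M$ is a direct sum of copies of $L_i(M)$ (rather than a larger module whose $q$-character might contain $m'$); and condition (iv), together with (i), is what rules out any \emph{other} $i$-dominant monomial of $\mathscr M(V)$ in $m'\mathbb Z[A_{i,a}]$ contributing to $m'$. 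Your paragraph identifies these as the main obstacles but stops short of the bookkeeping that makes them rigorous; for the purposes of this paper nothing more than the citation is needed.
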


\subsection{Minimal affinizations of type $C_n$} \label{definition of minimal affinizations}

From now on, we fix an $a\in \mathbb{C}^{\times}$ and denote $i_s = Y_{i, aq^s}$, $i \in I$, $s \in \mathbb{Z}$. Without loss of generality, we may assume that a simple $U_q \hat{\mathfrak{g}}$-module $L(m_+)$ of type $C_n$ is a minimal affinization of $V(\lambda)$ if and only if $m_+$ is one of the following monomials
\begin{gather}
\begin{align*}
\widetilde{T}^{(s)}_{k_1,k_{2},\ldots,k_n}&=\left(\prod^{k_n-1}_{i_n=0}n_{s+4i_n}\right)\prod^{n-1}_{j=1}\left(\prod^{k_{n-j}-1}_{i_{n-j}=0}(n-j)_{s+4k_n+2\sum_{p=1}^{j-1}k_{n-p}+2i_{n-j}+j}\right),\\
T^{(s)}_{k_1,k_{2},\ldots,k_n}&=\left(\prod^{k_n-1}_{i_n=0}n_{-s-4i_n}\right)\prod^{n-1}_{j=1}\left(\prod^{k_{n-j}-1}_{i_{n-j}=0}(n-j)_{-s-4k_n-2\sum_{p=1}^{j-1}k_{n-p}-2i_{n-j}-j}\right),
\end{align*}
\end{gather}
where $k_{1},\ldots k_{n}\in  \mathbb{Z}_{\geq 0}$, see \cite{CP96a}.

We write $T_{0, 0,\ldots,0}^{(s)}=1$ for any $s \in \mathbb{Z}$. We denote $A_{i,aq^{s}}^{-1}$ by $A_{i,s}^{-1}$. For a dominant monomial $T$, we use $\mathcal{T}$ to denote $L(T)$ the simple $U_q \mathfrak{\hat{g}}$-module with heighest $l$-weight $T$. For example, $\mathcal T^{(s)}_{k_{1},k_{2},\ldots,k_{n}} = L(T^{(s)}_{k_{1},k_{2},\ldots,k_{n}})$, where $k_{1},\ldots k_{n}\in  \mathbb{Z}_{\geq 0}$.

\subsection{The modules $\mathcal{\widetilde{S}}^{(s)}_{k_1,\ldots,k_{n-2},k_{n-1},0}$ and $\mathcal{S}^{(s)}_{k_1,\ldots,k_{n-2},k_{n-1},0}$}

For $s\in \mathbb{Z}$, $k_1,\ldots,k_{n-2}\in \mathbb{Z}_{\geq0}$, $k_{n-1}\in \mathbb{Z}_{\geq1}$, we will need the modules $\mathcal{\widetilde{S}}^{(s)}_{k_1,\ldots,k_{n-2},k_{n-1},0}$ and $\mathcal{S}^{(s)}_{k_1,\ldots,k_{n-2},k_{n-1},0}$ with the following heighest $l$-weights
\begin{gather}
\begin{align*}
&\widetilde{S}^{(s)}_{k_1,\ldots,k_{n-2},k_{n-1},0}=\widetilde{T}^{(s)}_{k_1,\ldots,k_{n-2},k_{n-1},0}\widetilde{T}^{(s+2k_{n-1}+4)}_{k_1,\ldots,k_{n-2}-2,0,0}\quad (k_{n-2}\geq2),\\
&S^{(s)}_{k_1,\ldots,k_{n-2},k_{n-1},0}=T^{(s)}_{k_1,\ldots,k_{n-2},k_{n-1},0}T^{(s+2k_{n-1}+4)}_{k_1,\ldots,k_{n-2}-2,0,0}\quad (k_{n-2}\geq2),\\
&\widetilde{S}^{(s)}_{k_1,\ldots,k_{n-2},k_{n-1},0}=\widetilde{T}^{(s)}_{k_1,\ldots,k_{n-2},k_{n-1},0}\widetilde{T}^{(s+2k_{n-1}+4)}_{k_1,\ldots,k_{n-3}-1,0,0,0}\quad (k_{n-2}=1),\\
&S^{(s)}_{k_1,\ldots,k_{n-2},k_{n-1},0}=T^{(s)}_{k_1,\ldots,k_{n-2},k_{n-1},0}T^{(s+2k_{n-1}+4)}_{k_1,\ldots,k_{n-3}-1,0,0,0}\quad (k_{n-2}=1),
\end{align*}
\end{gather}
respectively.

We use $T^{(s)}_{k_1,\ldots,\underset{i}{k_i},\ldots,k_{n-1},k_{n}}$ to indicate that $k_i$ is in the $i$-th position. For example, the $1$ in $T^{(s)}_{0,\ldots,0,\underset{i}1,0,\ldots,0,k_{n-1},0}$ is in the $i$-th position.

\begin{remark}
For $k_1,\ldots,k_{n-2}\in \mathbb{Z}_{\geq0}, k_{n-1}\in \mathbb{Z}_{\geq1}, s\in \mathbb{Z}$, we have
\begin{align*}
\widetilde{S}^{(s)}_{0,\ldots,0,\underset{m}1,0,\ldots,0,\underset{l}1,0,\ldots,0,k_{n-1},0}=& \widetilde{ T}^{(s)}_{0,\ldots,0,\underset{m}1,0,\ldots,0,\underset{l}1,0,\ldots,0,k_{n-1},0},\\
S^{(s)}_{0,\ldots,0,\underset{m}1,0,\ldots,0,\underset{l}1,0,\ldots,0,k_{n-1},0}=& T^{(s)}_{0,\ldots,0,\underset{m}1,0,\ldots,0,\underset{l}1,0,\ldots,0,k_{n-1},0},\\
\widetilde{S}^{(s)}_{0,\ldots,0,\underset{l}1,0,\ldots,0,k_{n-1},0}=& \widetilde{ T}^{(s)}_{0,\ldots,0,\underset{l}1,0,\ldots,0,k_{n-1},0},\\ S^{(s)}_{0,\ldots,0,\underset{l}1,0,\ldots,0,k_{n-1},0}=& T^{(s)}_{0,\ldots,0,\underset{l}1,0,\ldots,0,k_{n-1},0},
\end{align*}
where $1\leq m<l\leq n-2$.
\end{remark}

\subsection{$q$-characters of $U_q \hat{\mathfrak{sl}}_2$-modules and the Frenkel-Mukhin algorithm}
We recall the results of the $q$-characters of $U_q \hat{\mathfrak{sl}}_2$-modules which are well-understood, see \cite{CP91}, \cite{FR98}.

Let $W_{k}^{(a)}$ be the irreducible representation $U_q \hat{\mathfrak{sl}}_2$ with
highest weight monomial
\begin{align*}
X_{k}^{(a)}=\prod_{i=0}^{k-1} Y_{aq^{k-2i-1}},
\end{align*}
where $Y_a=Y_{1, a}$. Then the $q$-character of $W_{k}^{(a)}$ is given by
\begin{align*}
\chi_q(W_{k}^{(a)})=X_{k}^{(a)} \sum_{i=0}^{k} \prod_{j=0}^{i-1} A_{aq^{k-2j}}^{-1},
\end{align*}
where $A_a=Y_{aq^{-1}}Y_{aq}$.

For $a\in \mathbb{C}^{\times}, k\in \mathbb{Z}_{\geq 1}$, the set $\Sigma_{k}^{(a)}=\{aq^{k-2i-1}\}_{i=0, \ldots, k-1}$ is called a \textit{string}. Two strings $\Sigma_{k}^{(a)}$ and $\Sigma_{k'}^{(a')}$ are said to be in \textit{general position} if the union $\Sigma_{k}^{(a)} \cup \Sigma_{k'}^{(a')}$ is not a string or $\Sigma_{k}^{(a)} \subset \Sigma_{k'}^{(a')}$ or $\Sigma_{k'}^{(a')} \subset \Sigma_{k}^{(a)}$.

Denote by $\chi_q(m_+)$ the simple $U_q \hat{\mathfrak{sl}}_2$-module with
highest weight monomial $m_+$. Let $m_{+} \neq 1$ and $\in \mathbb{Z}[Y_a]_{a\in \mathbb{C}^{\times}}$ be a dominant monomial. Then $m_+$ can be uniquely (up to permutation) written in the form
\begin{align*}
m_+=\prod_{i=1}^{s} \left( \prod_{b\in \Sigma_{k_i}^{(a_i)}} Y_{b} \right),
\end{align*}
where $s$ is an integer, $\Sigma_{k_i}^{(a_i)}, i=1, \ldots, s$, are strings which are pairwise in general position and
\begin{align*}
L(m_+)=\bigotimes_{i=1}^s W_{k_i}^{(a_i)}, \qquad \chi_q(L(m_+))=\prod_{i=1}^s
 \chi_q(W_{k_i}^{(a_i)}).
\end{align*}

For $j\in I$, let
\begin{align*}
\beta_j : \mathbb{Z}[Y_{i, a}^{\pm 1}]_{i\in I; a\in \mathbb{C}^{\times}} \to \mathbb{Z}[Y_{a}^{\pm 1}]_{a\in \mathbb{C}^{\times}}
\end{align*}
be the ring homomorphism such that $\beta_j(Y_{k, a})=1$ for $k\neq j$ and $\beta_j(Y_{j, a}) = Y_{a}$ for all $a\in \mathbb{C}^{\times}$.

Let $V$ be a $U_q \hat{\mathfrak{g}}$-module. Then $\beta_i(\chi_q(V))$, $i=1, 2$, is the $q$-character of $V$ considered as a $U_{q_i} \hat{\mathfrak{sl}_2} $-module.

The Frenkel-Mukhin algorithm introduced in \cite{FM01} is very useful to compute $q$-characters of simple $U_q \mathfrak{\hat{g}}$-modules. It is shown in \cite{FM01} that the Frenkel-Mukhin algorithm works for simple $U_q \mathfrak{\hat{g}}$-modules which are special.

\subsection{Truncated $q$-characters}

In this paper, we need to use the concept truncated $q$-characters, see \cite{HL10} and \cite{MY12a}. Given a set of monomials $\mathcal R\subset\mathcal P$, let $\mathbb{Z}\mathcal{R}\subset \mathbb{Z}\mathcal{P}$ denote the $\mathbb{Z}$-module of formal linear combinations of elements of $\mathcal R$ with integer coefficients. Define
\begin{align*}
\text{trunc}_{\mathcal R}: \mathcal P \rightarrow \mathcal R; \quad m \mapsto
\begin{cases}
    m,  & \text{if}\quad  m\in \mathcal{R}, \\
    0,  & \text{if}\quad  m\notin \mathcal{R},
     \end{cases}
\end{align*}
and extend $\text{trunc}_{\mathcal R}$ as a $\mathbb{Z}$-module map $\mathbb{Z}\mathcal{P}\rightarrow \mathbb{Z}\mathcal{R}$.

Given a subset $U\subset I\times \mathbb{C}^{\times}$, let $\mathcal{Q}_{U}$ be the subgroups of $\mathcal{Q}$ generated by $A_{i, a}$ with $(i, a)\in U$. Let $\mathcal{Q}^{\pm}_{U}$ be the monoid generated by $A^{\pm 1}_{i,a}$ with $(i, a)\in U$. The polynomial $\text{trunc}_{m_{+}\mathcal{Q}^{-}_{U}}\chi_{q}(m_+)$ is called \emph{the $q$-character of $L(m_+)$ truncated to $U$}.

The following theorem can be used to compute some truncated $q$-characters.

\begin{theorem}[Theorem 2.1, \cite{MY12a}]\label{truncated $q$-characters}
Let $U\subset I\times \mathbb{C}^{\times}$ and $m_+\in \mathcal{P}^{+}$. Suppose that $\mathscr M\subset \mathcal P$ is a finite set of distinct monomials such that
\begin{enumerate}[(i)]
\item $\mathscr M\subset m_+\mathcal{Q}^{-}_{U}$,

\item $\mathcal{P}^{+}\cap \mathscr{M}=\{m_+\}$,

\item for all $m\in \mathscr M$ and all $(i, a)\in U$, if $mA^{-1}_{i,a}\notin \mathscr M$, then $mA^{-1}_{i,a}A_{j,b}\notin \mathscr M$ unless $(j, b)=(i, a)$,

\item for all $m\in \mathscr M$ and all $i\in I$, there exists a unique $i$-dominant monomial $M\in \mathscr M$ such that
\end{enumerate}
\begin{align*}
\emph{trunc}_{\beta_{i}(M\mathcal{Q}^{-}_{U})}\chi_{q}(\beta_{i}(M))=\sum_{m^{'}\in m\mathcal{Q}_{\{i\}\times \mathbb{C}^{\times}}\cap \mathscr M}\beta_{i}(m^{'}).
\end{align*}
Then
\begin{align*}
\emph{trunc}_{m_+\mathcal{Q}^{-}_{U}}\chi_{q}(m_+)=\sum_{m\in \mathscr M}m.
\end{align*}
\end{theorem}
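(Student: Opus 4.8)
The plan is to prove the identity by running a truncated version of the Frenkel--Mukhin algorithm and matching it term by term against $\mathscr M$. Write $\chi := \chi_q(m_+)$ and $N := \sum_{m\in\mathscr M}m$. Since $\mathscr M(L(m_+))\subset m_+\mathcal{Q}^{-}$, the truncation $P := \text{trunc}_{m_+\mathcal{Q}^{-}_{U}}\chi$ is well defined and all its monomials, like those of $N$ by hypothesis (i), lie in $m_+\mathcal{Q}^{-}_{U}$. For such a monomial $m$ let $c_P(m)$ and $c_N(m)$ be its coefficients in $P$ and in $N$; I will show $c_P(m)=c_N(m)$ for every $m\in m_+\mathcal{Q}^{-}_{U}$ by induction on $v(mm_+^{-1})\in\mathbb{Z}_{\geq 0}$. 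When $v(mm_+^{-1})=0$ we have $m=m_+$, and $c_P(m_+)=1$ because $m_+$ is the highest $l$-weight of $L(m_+)$, while $c_N(m_+)=1$ by hypothesis (ii) together with the fact that $m_+$ is the unique $\leq$-maximal monomial of $\mathscr M$.

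For the inductive step the essential input is the $\mathfrak{sl}_2$-reduction of $q$-characters (\cite{FR98}, \cite{FM01}): for each $i\in I$, $\beta_i(\chi)$ is the $q$-character of $L(m_+)$ regarded as a $U_{q_i}\hat{\mathfrak{sl}}_2$-module, and $\mathscr M(L(m_+))$ is a disjoint union of $i$-blocks (orbits under multiplication by $\mathcal{Q}_{\{i\}\times\mathbb{C}^{\times}}$), each block having a $\leq$-maximal $i$-dominant monomial $M$ from which the rest of the block is obtained by multiplying by the $A_{i,a}^{-1}$, and such that all monomials of the lift of $\chi_q(\beta_i(M))$ occur in the block of $M$. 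Because $L(m_+)$ is not assumed special, a single $i$-block of $\chi$ may contain several $i$-dominant monomials and may be strictly larger than the lift of one simple $U_{q_i}\hat{\mathfrak{sl}}_2$-character; the whole point of hypotheses (ii)--(iv) is that after truncating to $m_+\mathcal{Q}^{-}_{U}$ this does not happen. Given $m\in\mathscr M(L(m_+))\cap m_+\mathcal{Q}^{-}_{U}$ with $v(mm_+^{-1})\geq 1$ that is not dominant -- the dominant case being handled separately below -- I would choose $i\in I$ with $m$ not $i$-dominant, let $M$ be the $\leq$-maximal $i$-dominant monomial of the $i$-block of $m$ in $\chi$, observe $v(Mm_+^{-1})<v(mm_+^{-1})$ since $m=M\prod A_{i,a}^{-1}$ with a nonempty product, and apply the induction hypothesis to get $c_P(M)=c_N(M)\geq 1$, hence $M\in\mathscr M$. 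Hypothesis (iv) then identifies $\beta_i$ of the block $m\mathcal{Q}_{\{i\}\times\mathbb{C}^{\times}}\cap\mathscr M$ with $\text{trunc}_{\beta_i(M\mathcal{Q}^{-}_{U})}\chi_q(\beta_i(M))$, and comparing this with the truncated $i$-string of $\chi$ headed by $M$ -- read off from the explicit $U_q\hat{\mathfrak{sl}}_2$ character formula recalled above -- yields $c_P(m)=c_N(m)$. Hypothesis (iii) is used precisely here: it prevents $\mathscr M$ from omitting an interior monomial of such an $i$-string that lies in $m_+\mathcal{Q}^{-}_{U}$, so the surviving block really is a contiguous truncated $\mathfrak{sl}_2$-string segment rather than a disconnected fragment. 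Running the same induction in the reverse direction -- starting from $m\in\mathscr M$, picking $i$ with $m$ not $i$-dominant, taking $M\in\mathscr M$ to be the unique $i$-dominant monomial of its $\mathscr M$-block from (iv), deducing $M\in\mathscr M(L(m_+))$ by induction, and using that $M$ being $i$-dominant forces the lift of $\chi_q(\beta_i(M))$ into the $i$-block of $M$ in $\chi$ (since every dominant monomial of a $U_{q_i}\hat{\mathfrak{sl}}_2$-restriction is the highest weight of a composition factor) -- shows conversely that every $m\in\mathscr M$ occurs in $P$ with coefficient $1$.

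It then remains to exclude the possibility that $\mathscr M(L(m_+))$ contains a dominant monomial $m'\neq m_+$ lying in $m_+\mathcal{Q}^{-}_{U}$: choosing such an $m'$ of minimal $v(m'm_+^{-1})$ and running the descent above down to $m'$ would force $m'\in\mathscr M$, contradicting hypothesis (ii). Combining the two inclusions gives $P=N$, which is the assertion.

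The main obstacle, and the reason the hypotheses look technical, is exactly that $L(m_+)$ need not be special: the Frenkel--Mukhin algorithm is not known to compute $\chi$, and a priori both extra dominant monomials and multi-simple $i$-blocks can occur. The substance of the argument is to show that the purely combinatorial conditions (i)--(iv) on $\mathscr M$ force $\chi$ to behave on the region $m_+\mathcal{Q}^{-}_{U}$ exactly as if $L(m_+)$ were special, so that its $\mathfrak{sl}_2$-reduction pins down every coefficient there. Among the hypotheses, (iii) is the one that must be applied most carefully, since it is what stops truncation from cutting an $\mathfrak{sl}_2$-string into pieces and thereby breaking the block-by-block bookkeeping.
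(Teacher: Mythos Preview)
The paper does not give its own proof of this theorem: it is stated verbatim as Theorem 2.1 of \cite{MY12a} and then used as a black box, so there is nothing in the paper to compare your argument against at the level of details. Your overall strategy---induction on $v(mm_+^{-1})$, using the $U_{q_i}\hat{\mathfrak{sl}}_2$-reduction to pin down each $i$-block of the truncated $q$-character and match it with the block of $\mathscr M$ supplied by hypothesis (iv)---is indeed the approach taken in the Mukhin--Young paper, and your treatment of the non-dominant case is essentially correct. In particular, your observation that every $i$-dominant monomial in the $i$-block of a non-$i$-dominant $m\in m_+\mathcal Q^-_U$ is strictly higher and automatically still lies in $m_+\mathcal Q^-_U$ is the key bookkeeping point, and you have it.

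There is, however, a real gap in your last paragraph. Your inductive step is driven by choosing $i$ with $m$ not $i$-dominant, so when $m'\neq m_+$ is dominant this step simply does not apply, and the phrase ``running the descent above down to $m'$'' has no content: you never reach $m'$ by that mechanism. What is actually needed is the standard connectedness property of $q$-characters of simple modules (see \cite{FM01}): for any $m'\in\mathscr M(L(m_+))$ with $m'\neq m_+$ there exist $i\in I$ and $a$ such that $m'A_{i,a}\in\mathscr M(L(m_+))$. Since $m'\in m_+\mathcal Q^-_U$ one checks $(i,a)\in U$, so $m'A_{i,a}$ lies in the truncated region with strictly smaller $v$, hence by induction $m'A_{i,a}\in\mathscr M$. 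Now invoke (iv) for this $i$: the $i$-block of $\mathscr M$ containing $m'A_{i,a}$ has a \emph{unique} $i$-dominant member $M$, and $\text{trunc}_{\beta_i(M\mathcal Q^-_U)}\chi_q(\beta_i(M))$ accounts for the whole block. But $m'$ is a second $i$-dominant monomial in the same $i$-block of the truncated $\chi_q(m_+)$; tracking multiplicities through the induction (all $i$-dominant monomials of the block in the truncated character at levels below $v(m'm_+^{-1})$ already agree with $\mathscr M$, hence reduce to $M$ alone with coefficient $1$) shows that the $i$-block of the truncated character at level $v(m'm_+^{-1})$ has no room for $m'$, yielding the contradiction. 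You should spell this out rather than gesturing at a ``descent,'' since this is precisely the place where the argument must cope with $L(m_+)$ not being special.
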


Here $\chi_{q}(\beta_{i}(M))$ is the $q$-character of the simple $U_{q_{i}} \hat{\mathfrak{sg}}_{2}$-module with highest weight monomial $\beta_{i}(M)$, and trunc$_{\beta_{i}(M\mathcal{Q}^{-}_{U})}$ is the polynomial obtained from $\chi_{q}(\beta_{i}(M))$ by keeping only the monomials of $\chi_{q}(\beta_{i}(M))$ in the set $\beta_{i}(M\mathcal{Q}^{-}_{U})$.

\section{A closed system which contains a large family of minimal affinizations of type $C_n$ } \label{main results}
In this section, we introduce a closed system of type $C_{n}$ that contains a large family of minimal affinizations.

\subsection{Special modules}

\begin{theorem}[Theorem 3.9, \cite{Her07}]\label{special}
For $s\in \mathbb{Z}, k_1,\ldots,k_{n-1}\in \mathbb{Z}_{\geq 0}$, the module $\mathcal T_{k_1, k_2,\ldots, k_{n-1},0}^{(s)}$ is special.
\end{theorem}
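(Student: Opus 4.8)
The statement is that the module $\mathcal T_{k_1,\ldots,k_{n-1},0}^{(s)}$ (a minimal affinization whose highest $l$-weight involves only the nodes $1,\ldots,n-1$, with $k_n=0$) is special, i.e.\ its $q$-character contains a unique dominant monomial. Since this is quoted as Theorem 3.9 of \cite{Her07}, the natural route is to reproduce Hernandez's argument, which I would organize as follows. First, I would reduce to showing that no monomial $m \in \mathscr M(\mathcal T^{(s)}_{k_1,\ldots,k_{n-1},0})$ other than the highest one $T^{(s)}_{k_1,\ldots,k_{n-1},0}$ can be dominant. By the general fact recalled in the excerpt, every $m \in \mathscr M(L(m_+))$ lies in $m_+\mathcal Q^-$, so write $m = m_+ \prod_{i,b} A_{i,b}^{-v_{i,b}}$ with $v_{i,b}\ge 0$, not all zero, and aim for a contradiction with dominance.

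The key structural input is that the highest weight monomial here is supported only on nodes $1,\dots,n-1$ at the specific spectral parameters appearing in $T^{(s)}_{k_1,\dots,k_{n-1},0}$, and in particular it involves no $Y_{n,\cdot}$ factor. The plan is to run the Frenkel--Mukhin algorithm (which is valid for special modules, but here one argues by a bootstrapping/induction so that the specialness is established in the course of the computation) and track right-negativity: I would show that as soon as any $A_{i,b}^{-1}$ with $i$ near the "long root" end is introduced, the resulting monomial becomes right-negative, hence cannot be dominant, and more generally that the only way to keep building without creating a right-negative (indeed a non-dominant) monomial forces one back to $m_+$. Concretely, one uses that $A_{i,b}^{-1}$ is right-negative, that products of right-negative monomials are right-negative, and that $m' \le m$ with $m$ right-negative forces $m'$ right-negative; combined with the explicit form of the $A_{i,b}$ in type $C_n$ given in the excerpt, this pins down which $A_{i,b}^{-1}$ can appear at each stage. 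An induction on $n$ (restricting to the sub-quantum-affine-algebra $U_q\hat{\mathfrak g}_J$ for $J=\{1,\dots,n-1\}$, which is of type $A_{n-1}$, where all minimal affinizations are known to be special) handles the "interior" nodes, while the behavior at node $n$ is controlled separately using the Elimination Theorem (Theorem~\ref{Elimination Theorem}) to rule out the dangerous dominant candidates that a naive count would allow.

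More precisely, for the induction step I would: (1) decompose $\chi_q$ with respect to $\beta_i$ for each $i$ to understand the $U_{q_i}\hat{\mathfrak{sl}}_2$-structure; (2) observe that because $k_n=0$, the string data at node $n$ is trivial at the top, so the first time a $Y_{n,\cdot}$ appears it must come from an $A_{n-1,b}^{-1}$ factor, and it appears with the "wrong" sign pattern, contributing to right-negativity; (3) use the partial order and right-negativity to conclude that any hypothetical second dominant monomial $m$ would have to be obtained without ever lowering at node $n$ in a way that resurrects dominance — and then the type $A_{n-1}$ specialness (inductive hypothesis) finishes the job on the remaining nodes. The Elimination Theorem is invoked at exactly the step where one has a unique $i$-dominant monomial in a coset $m'\mathbb{Z}[A_{i,a}]$ and needs to certify $m' \notin \mathscr M(V)$; verifying its five hypotheses (especially (ii), the vanishing of the raising operators on the relevant $l$-weight space, and (iv)–(v), the minimality conditions on $v$) is the technical heart.

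The main obstacle, as in all such specialness proofs, is controlling the monomials "deep" in the $q$-character: right-negativity alone kills monomials once a lowering operator has acted at a late enough spectral parameter, but one still has to rule out dominant monomials that could in principle be created "early" before any right-negative factor dominates. This is where the careful bookkeeping of the spectral parameters in the explicit formula for $T^{(s)}_{k_1,\ldots,k_{n-1},0}$ — the offsets $s+4k_n+2\sum k_{n-p}+2i_{n-j}+j$ — matters: one must check that the arithmetic progressions of parameters at adjacent nodes are interlaced in exactly the way that prevents a second dominant monomial, and then feed this into the Elimination Theorem. I expect that step, rather than the right-negativity or the induction, to be where the real work lies, and I would lean on the type $A_{n-1}$ and $U_q\hat{\mathfrak{sl}}_2$ reductions to keep it manageable.
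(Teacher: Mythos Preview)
The paper does not prove this theorem. It is quoted verbatim from Hernandez \cite{Her07} (Theorem~3.9 there) and used as a known input; the proof effort in Section~\ref{proof of special} is devoted entirely to the new cases in Theorem~\ref{The special modules of $C_n$}, not to this statement. So there is no ``paper's own proof'' to compare your proposal against.

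Regarding your reconstruction of Hernandez's argument: the broad strategy---restriction to a subdiagram of type $A_{n-1}$, right-negativity to discard monomials once a lowering at a late spectral parameter has occurred, and a separate analysis at the long-root node $n$---is indeed the flavor of \cite{Her07}. Two cautions, however. First, your invocation of the Frenkel--Mukhin algorithm is circular as written: FM is only guaranteed to compute $\chi_q$ correctly for modules already known to be special, so it cannot itself be the tool that establishes specialness; Hernandez's proof proceeds by a direct induction (on rank and on the highest weight) together with explicit tracking of which $A_{i,a}^{-1}$ can appear, without presupposing FM. Second, the Elimination Theorem you plan to use is Theorem~5.1 of \cite{Her08}, a later paper than \cite{Her07}, so it is not the device used in the original proof---though nothing prevents you from attempting an alternative argument along those lines.
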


\begin{theorem}\label{The special modules of $C_n$}
For $k_1,\ldots,k_{n-1}\in \mathbb{Z}_{\geq0}$, $k_n\in \mathbb{Z}_{\geq 1}$ and $s\in \mathbb{Z}$,  the modules
\begin{align*}
&\widetilde{\mathcal T}^{(s)}_{0,\ldots,0,k_{n-i},0,\ldots,0,k_n}\ (1\leq i\leq n-1),\\
&\widetilde{\mathcal T}^{(s)}_{0,\ldots,0,k_{n-j},0,\ldots,0,k_{n-i},0,\ldots,0,k_n}\ (1\leq i<j\leq n-1,\ 0\leq k_{n-j}\leq i+1) , \\
&\widetilde{\mathcal T}^{(s)}_{k_1,\ldots,k_m,0,\ldots,0,k_l,0,\ldots,0,k_n}\ (1\leq m<l\leq n-1,\ 0\leq k_1+\cdots+k_m\leq n-l+1),\\
&\widetilde{\mathcal S}^{(s)}_{k_1,\ldots,k_m,0,\ldots,0,k_l,0,\ldots,0,k_{n-1},0}\ (1\leq m<l\leq n-2,\ 0\leq k_1+\cdots+k_m\leq n-l+1),
\end{align*}
are special. In particular,we can use the Frenkel-Mukhin algorithm to compute these modules.
\end{theorem}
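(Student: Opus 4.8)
The plan is to prove, for each highest $\ell$-weight $m_+$ occurring in the list, that $m_+$ is the \emph{only} dominant monomial of $\chi_q(L(m_+))$; the assertion about the Frenkel--Mukhin algorithm is then automatic, since that algorithm is known to compute the $q$-character of any special module. I would constantly use the elementary observation that a monomial $m\neq m_+$ which is right negative cannot be dominant (at the largest occurring spectral exponent it has a factor of nonpositive, hence of strictly negative, power), so the task is to show that the \emph{non}--right--negative monomials of $\chi_q(L(m_+))$ collapse to $m_+$. I would argue by a double induction, on the rank $n$ and, for fixed $n$, on the total degree $k_1+\cdots+k_n$ of $m_+$. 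The base cases are covered: the modules with $k_n=0$ are special by Theorem \ref{special} together with the spectral automorphism $Y_{j,aq^t}\mapsto Y_{j,aq^{-t}}$ (which preserves the $A_{j,\cdot}$-structure and exchanges the $T$- and $\widetilde T$-type monomials), and for the pure long-node Kirillov--Reshetikhin monomial $\widetilde T^{(s)}_{0,\ldots,0,k_n}$ one reads off from the explicit $U_{q_n}\hat{\mathfrak{sl}}_2$ $q$-character recalled in the Background that every monomial below the top is right negative. Finally I would reduce the $\widetilde{\mathcal S}$-cases and the multi-block $\widetilde{\mathcal T}$-cases to the one-block case $\widetilde{\mathcal T}^{(s)}_{0,\ldots,0,k_{n-i},0,\ldots,0,k_n}$: in each case $m_+$ is, with the convention that a $\widetilde T$ with a negative subscript denotes the monomial $1$, a product of one or two highest $\ell$-weights already handled by the induction, so by Lemma \ref{contains in a larger set} any monomial of $\chi_q(L(m_+))$ is a product of monomials from the $q$-characters of these smaller special modules, and a dominant such product forces each factor to be the corresponding highest $\ell$-weight once the local analysis below is in place.

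For the one-block module I would write $m_+=M'M''$, where $M'=\prod_{p=0}^{k_n-1}Y_{n,aq^{s+4p}}$ is the highest $\ell$-weight of the long-node Kirillov--Reshetikhin module $W^{(n)}_{k_n}$ (special by the base case) and $M''=\prod_{p=0}^{k_{n-i}-1}Y_{n-i,aq^{s+4k_n+i+2p}}$ is the highest $\ell$-weight of a short-node Kirillov--Reshetikhin module (special by Theorem \ref{special} up to the above automorphism). By Lemma \ref{contains in a larger set} every $m\in\mathscr M(L(m_+))$ equals $p'p''$ with $p'\in\mathscr M(W^{(n)}_{k_n})$ and $p''\in\mathscr M(L(M''))$. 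From the $U_{q_n}\hat{\mathfrak{sl}}_2$ $q$-character one sees that any $p'\neq M'$ contains $Y_{n,aq^{s+4k_n}}^{-1}$, is right negative, and is supported on nodes $n,n-1$ in a bounded spectral band around node $n$; symmetrically, any $p''\neq M''$ is right negative and supported on nodes $\geq n-i-1$ with all spectral exponents $\geq s+4k_n+i-2$, i.e.\ to the right of $M'$. Matching spectral exponents using the spacing built into $m_+$, one checks that the $Y_{n,\cdot}^{-1}$- and $Y_{n-1,\cdot}^{-1}$-factors created when $W^{(n)}_{k_n}$ is lowered cannot all be absorbed into $p''$, and conversely, so that a dominant product $p'p''$ is possible only when $p'=M'$ and $p''=M''$, i.e.\ $m=m_+$ — with the exception of a short explicit list of candidate monomials $m'<m_+$ that this crude support/spacing count fails to exclude.

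These surviving candidates arise from interaction between a lowering of $W^{(n)}_{k_n}$ at node $n$ and a chain of lowerings of $L(M'')$ that reaches node $n-1$; this is exactly where the long node ($q_n=q^2$) matters, since $A_{n,a}^{-1}$ spans four spectral slots at node $n$ and two at node $n-1$ and thereby overlaps the short-node band, and the subcase $i=1$, where $M''$ already carries $Y_{n-1,\cdot}$-factors, is the delicate one. For each such $m'$ I would invoke Theorem \ref{Elimination Theorem} with $i$ the node at which $m'$ is not right negative: conditions (i) and (iii) are verified on $\beta_i(\chi_q(L(m_+)))$, which is a product of $U_{q_i}\hat{\mathfrak{sl}}_2$ $q$-characters and hence completely explicit, condition (ii) is read off the same $U_{q_i}\hat{\mathfrak{sl}}_2$-structure, and conditions (iv)--(v), which bound $v(m''m_+^{-1})$ among $i$-dominant monomials in the relevant $A_{i,\cdot}$-string, follow from the induction hypothesis applied to the strictly smaller modules obtained by decreasing one of $k_{n-i}$, $k_n$ by one. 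For the two-block, left-cluster and $\widetilde{\mathcal S}$-families it is often cleaner instead to run Theorem \ref{truncated $q$-characters} on a truncation set $U$ chosen so that every monomial of $\chi_q(L(m_+))$ outside $m_+\mathcal Q^-_U$ is right negative; verifying hypotheses (iii)--(iv) of that theorem is again the same $U_{q_i}\hat{\mathfrak{sl}}_2$-bookkeeping, and the resulting truncated $q$-character exhibits $m_+$ as its unique dominant monomial.

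The main obstacle is precisely this local analysis at the long node: proving that no cancellation between the node-$n$ part and the adjacent short-node part can create a new dominant monomial, and pinning down exactly which borderline monomials need the Elimination Theorem (the $i=1$ "tail" modules, and for the $\widetilde{\mathcal S}$-modules the behaviour of the extra $\widetilde T$-factor). Everything else — the reduction of $\widetilde{\mathcal S}$ and of the multi-block $\widetilde{\mathcal T}$'s to the one-block case, and the propagation through the induction on $n$ and on total degree — is routine once this local picture, together with Lemma \ref{contains in a larger set}, is available.
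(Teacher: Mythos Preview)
Your proposal assembles the right ingredients (Lemma~\ref{contains in a larger set}, right-negativity, Theorem~\ref{Elimination Theorem}, Theorem~\ref{truncated $q$-characters}) and has the right overall shape, but it misses the paper's main technical device and is vague at exactly the point where that device does the work.

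The paper does \emph{not} argue by induction on the rank~$n$; it works at fixed~$n$ and treats each family case by case. The key trick, used throughout Cases~2--3 of~\S\ref{proof of special} and in all the later subsections, is to write $m_+=m'_1m'_2=m''_1m''_2$ in \emph{two different ways}, apply Lemma~\ref{contains in a larger set} twice, and show that any dominant monomial $\neq m_+$ appearing in $\chi_q(m'_1)\chi_q(m'_2)$ fails to lie in $\chi_q(m''_1)\chi_q(m''_2)$ (typically because it would force a monomial like $m''_1A_{n,\cdot}^{-1}$ that the Frenkel--Mukhin algorithm excludes from $\chi_q(m''_1)$). This double-factorization makes the Elimination Theorem almost unnecessary: the paper invokes Theorem~\ref{Elimination Theorem} exactly once (Case~3.1, for a single explicit monomial $m'=(n-1)_1(n-i-1)_{i+3}$), and Theorem~\ref{truncated $q$-characters} exactly once (Case~1, $i=1$), whereas your plan proposes to use them as the main engine.

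Your single factorization $m_+=M'M''$ is the paper's \emph{second} splitting $m''_1m''_2$. With only this splitting you would have to classify all dominant monomials of $\chi_q(M')\chi_q(M'')$ and eliminate each one; you never exhibit the ``short explicit list'' of survivors, and your verification of hypothesis~(ii) of Theorem~\ref{Elimination Theorem} (``read off the same $U_{q_i}\hat{\mathfrak{sl}}_2$-structure'') is too quick --- this condition concerns the $U_q\hat{\mathfrak g}$-module structure of $V_M$, not just its $\beta_i$-image. The induction on~$n$ also plays no visible role in your argument and should be dropped. In short: your outline is not wrong, but the paper's two-factorization idea replaces your open-ended elimination step with a short, concrete comparison, and that is what actually closes the argument.
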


We will prove Theorem \ref{The special modules of $C_n$} in Section \ref{proof of special}.

\subsection{A closed system of type $C_n$}

\begin{theorem}\label{M system of type Cn}
For $s\in \mathbb{Z}$, $k_{1},\ldots, k_{n}\in  \mathbb{Z}_{\geq 0}$, we have the following system of equations:

\begin{align}\label{eqn1}
\begin{split}
[\mathcal T^{(s)}_{k_{1},k_{2}-1,k_{3},\ldots,k_{n-1},0}][\mathcal T^{(s)}_{k_{1},k_{2},k_{3},\ldots,k_{n-1},0}]=
&[\mathcal T^{(s)}_{k_{1}-1,k_{2},k_{3},\ldots,k_{n-1},0}][\mathcal T^{(s)}_{k_{1}+1,k_{2}-1,k_{3},\ldots,k_{n-1},0}]\\
&+[\mathcal T^{(s)}_{0,k_{2}-1,k_{3},\ldots,k_{n-1},0}][\mathcal T^{(s)}_{0,k_{1}+k_{2},k_{3},\ldots,k_{n-1},0}],
\end{split}
\end{align}
where $k_1, k_2\geq1;$

\begin{gather}\label{eqn2}
\begin{split}
[\mathcal T^{(s)}_{0,\ldots,0,k_{i},k_{i+1}-1,k_{i+2},\ldots,k_{n-1},0}][\mathcal T^{(s)}_{0,\ldots, 0,k_{i},k_{i+1},k_{i+2},\ldots,k_{n-1},0}]=
&[\mathcal T^{(s)}_{0,\ldots, 0,k_{i}+1,k_{i+1}-1,k_{i+2},\ldots,k_{n-1},0}][\mathcal T^{(s)}_{0,\ldots, 0,k_{i}-1,k_{i+1},k_{i+2}\ldots,k_{n-1},0}]\\
&+[\mathcal T^{(s)}_{0,\ldots,0,\underset{i+1}{k_{i}+k_{i+1}},k_{i+2},\ldots,k_{n-1},0}][\mathcal T^{(s)}_{0,\ldots,0,\underset{i-1}{k_{i}},0,\underset{i+1}{k_{i+1}-1},k_{i+2},,\ldots,k_{n-1},0}],
\end{split}
\end{gather}
where $k_{i},k_{i+1}\geq1$, $1 < i \leq n-2;$

\begin{gather}\label{eqn3}
\begin{split}
[\mathcal T^{(s)}_{k_{1},0,\ldots,0,k_{j}-1,k_{j+1},\ldots,k_{n-1},0}][\mathcal T^{(s)}_{k_{1},0,\ldots,0,k_{j},k_{j+1},\ldots,k_{n-1},0}]=
&[\mathcal T^{(s)}_{k_{1}+1,0,\ldots,0,k_{j}-1,k_{j+1},\ldots,k_{n-1},0}]
[\mathcal T^{(s)}_{k_{1}-1,0,\ldots, 0,k_{j},k_{j+1}\ldots,k_{n-1},0}]\\
&+[\mathcal T^{(s)}_{0,k_{1},0,\ldots,0,k_{j},k_{j+1},\ldots,k_{n-1},0}][\mathcal T^{(s)}_{0,\ldots,0,k_{j}-1,k_{j+1},\ldots,k_{n-1},0}],
\end{split}
\end{gather}
where $k_1, k_j>0$, $2<j\leq n-1;$

\begin{gather}\label{eqn4}
\begin{split}
[\mathcal T^{(s)}_{0,\ldots,0,\underset{i}{k_{i}},0,\ldots,0,\underset{j}{k_{j}-1},k_{j+1},\ldots,k_{n-1},0}][\mathcal T^{(s)}_{0,\ldots, 0,\underset{i}{k_{i}},0,\ldots,0,\underset{j}{k_{j}},k_{j+1},\ldots,k_{n-1},0}]=
&[\mathcal T^{(s)}_{0,\ldots, 0,\underset{i}{k_{i}-1},0,\ldots,0,\underset{j}{k_{j}},k_{j+1},\ldots,k_{n-1},0}] [\mathcal T^{(s)}_{0,\ldots, 0,\underset{i}{k_{i}+1},0,\ldots,0,\underset{j}{k_{j}-1},k_{j+1}\ldots,k_{n-1},0}]\\
&+[\mathcal T^{(s)}_{0,\ldots,0,\underset{i+1}{k_{i}},0, \ldots, 0,\underset{j}{k_{j}},k_{j+1},\ldots,k_{n-1},0}][\mathcal T^{(s)}_{0,\ldots,0,\underset{i-1}{k_{i}},0,\ldots, 0,\underset{j}{k_{j}-1},k_{j+1},\ldots,k_{n-1},0}],
\end{split}
\end{gather}
where $k_i, k_j>0$, $2< i+1< j\leq n-1.$
\vskip 0.3cm
In the following, let $k_{1},\ldots, k_{n-1}\in  \mathbb{Z}_{\geq 0},\ k_n\in \mathbb{Z}_{>0}$ and $s\in \mathbb{Z}$.


\begin{gather}\label{eqn511}
\begin{split}
[\widetilde{\mathcal T}^{(s)}_{k_1,\ldots,k_m-1,0,\ldots,0,k_n}][\widetilde{\mathcal T}^{(s-4)}_{k_1,\ldots,k_m,0,\ldots,0,1,k_n}]=&[\widetilde{\mathcal T}^{(s)}_{k_1,\ldots,k_m,0,\ldots,0,1,k_n-1}][\widetilde{\mathcal T}^{(s-4)}_{k_1,\ldots,k_m-1,0,\ldots,0,k_n+1}]\\
&+[\widetilde{\mathcal T}^{(s+4k_n)}_{k_1,\ldots,k_m-1,0,\ldots,0,0}][\widetilde{\mathcal T}^{(s-4)}_{k_1,\ldots,k_m,0,\ldots,0,{2k_n+1},0}],
\end{split}
\end{gather}
where $1\leq m\leq n-2$ and $0\leq k_1+\cdots+k_m\leq2;$

\begin{gather}\label{eqn512}
\begin{split}
[\widetilde{\mathcal T}^{(s)}_{k_1,\ldots,k_m,0,\ldots,0,k_{n-1}-2,k_n}][\widetilde{\mathcal T}^{(s-4)}_{k_1,\ldots,k_m,0,\ldots,0,k_{n-1},k_n}]=&[\widetilde{\mathcal T}^{(s)}_{k_1,\ldots,k_m,0,\ldots,0,k_{n-1},k_{n}-1}][\widetilde{\mathcal T}^{(s-4)}_{k_1,\ldots,k_m,0,\ldots,0,k_{n-1}-2,k_{n}+1}]\\
&+[\widetilde{\mathcal T}^{(s+4k_n)}_{k_1,\ldots,k_m,0,\ldots,0,k_{n-1}-2,0}][\widetilde{\mathcal T}^{(s-4)}_{k_1,\ldots,k_m,0,\ldots,0,2k_n+k_{n-1},0}],
\end{split}
\end{gather}
where $1\leq m\leq n-2,\ k_{n-1}\geq2$ and $0\leq k_1+\cdots+k_m\leq2;$

\begin{gather}\label{eqn5211}
\begin{split}
[\widetilde{\mathcal T}^{(s)}_{k_1,\ldots,k_m-1,0,\ldots,0,\ldots,0,k_n}][\widetilde{\mathcal T}^{(s-4)}_{k_1,\ldots,k_m,0,\ldots,0,\underset{l}1,0,\ldots,0,k_n}]=&[\widetilde{\mathcal T}^{(s)}_{k_1,\ldots,k_m,0,\ldots,0,\underset{l}1,0,\ldots,0,k_n-1}][\widetilde{\mathcal T}^{(s-4)}_{k_1,\ldots,k_m-1,0,\ldots,0,\ldots,0,k_n+1}]\\
&+[\widetilde{\mathcal S}^{(s-4)}_{k_1,\ldots,k_m,0,\ldots,0,\underset{l}1,0,\ldots,0,2k_n,0}],
\end{split}
\end{gather}
where $1\leq m<l\leq n-2,\ 0\leq k_1+\cdots+k_m\leq n-l+1;$


\begin{gather}\label{eqn5221}
\begin{split}
[\widetilde{\mathcal T}^{(s)}_{k_1,\ldots,k_m,0,\ldots,0,k_l-2,0,\ldots,0,k_n}][\widetilde{\mathcal T}^{(s-4)}_{k_1,\ldots,k_m,0,\ldots,0,k_l,0,\ldots,0,k_n}]=&[\widetilde{\mathcal T}^{(s)}_{k_1,\ldots,k_m,0,\ldots,0,k_l,0,\ldots,0,k_n-1}][\widetilde{\mathcal T}^{(s-4)}_{k_1,\ldots,k_m,0,\ldots,0,k_l-2,0,\ldots,0,k_n+1}]\\
&+[\widetilde{\mathcal S}^{(s-4)}_{k_1,\ldots,k_m,0,\ldots,0,k_l,0,\ldots,0,2k_n,0}],
\end{split}
\end{gather}
where $1\leq m<l\leq n-2,\ k_l\geq2$ and $0\leq k_1+\cdots+k_m\leq n-l+1$.

\end{theorem}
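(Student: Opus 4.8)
The plan is to reduce each equation of Theorem \ref{M system of type Cn} to the corresponding identity of $q$-characters in $\mathbb Z\mathcal P$, and to prove that. This reduction is legitimate because a non-zero integral combination $\sum d_m[L(m)]$ of classes of simple modules has non-zero $q$-character: its coefficient at any $\leq$-maximal monomial $m$ with $d_m\neq 0$ equals $d_m$, by the upper-triangularity of $\{\chi_q(L(m))\}$ with respect to the order \eqref{partial order of monomials}. The first step is then the combinatorial bookkeeping with the explicit formulas for $\widetilde T^{(s)}_{k_1,\dots,k_n}$, $T^{(s)}_{k_1,\dots,k_n}$, $\widetilde S^{(s)}_{\dots}$ and $S^{(s)}_{\dots}$: in each of the equations \eqref{eqn1}--\eqref{eqn5221} one checks that the two products of highest $l$-weights coincide, $T_1T_2=T_3T_4=:m_+$, and that the highest $l$-weight $S$ of $\mathcal S$ satisfies $S<m_+$. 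Since then $L(T_1T_2)=L(T_3T_4)$ and this simple module occurs with multiplicity exactly one as the top composition factor of either tensor product, we may write $[\mathcal T_1][\mathcal T_2]=[L(m_+)]+A$ and $[\mathcal T_3][\mathcal T_4]=[L(m_+)]+B$ with $A,B$ non-negative combinations of classes $[L(m)]$, $m<m_+$; the equation is equivalent to $A=B+[\mathcal S]$.

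The second step disposes of the right-hand side: I would show that each tensor product appearing there is irreducible, i.e. $L(T_3)\otimes L(T_4)=L(m_+)$ and, in the cases where $S$ is itself a product of two minimal-affinization monomials, $L(S_1)\otimes L(S_2)=L(S)$. This is the content to be isolated as Theorem \ref{irreducible}, and it is proved by using the Elimination Theorem \ref{Elimination Theorem} to rule out every dominant monomial lying strictly between the highest $l$-weight of the product and any potential proper composition factor. Granting it, $B=0$, and it remains to prove $[\mathcal T_1][\mathcal T_2]=[L(m_+)]+[\mathcal S]$.

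For the third step I would determine the dominant monomials of $\chi_q(\mathcal T_1)\chi_q(\mathcal T_2)$. By Theorem \ref{special}, Theorem \ref{The special modules of $C_n$}, and a further check for $\mathcal S$ and $L(m_+)$, all of these modules are special, which is exactly what makes Theorem \ref{truncated $q$-characters} applicable. Choosing $U\subset I\times\mathbb C^\times$ to be the finite set of pairs $(i,aq^j)$ with $j$ in the window spanned by the spectral parameters of $m_+$ and $S$, I would compute $\mathrm{trunc}_{U}\chi_q(m_+)$ and $\mathrm{trunc}_{U}\chi_q(S)$ by verifying hypotheses (i)--(iv) of Theorem \ref{truncated $q$-characters}, reducing hypothesis (iv) at each node $i$ to the explicit $U_{q_i}\widehat{\mathfrak{sl}}_2$-string formulas recalled in the background and inducting on $n$. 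Combining this with Lemma \ref{contains in a larger set}, the fact that a product of right-negative monomials is right-negative and a right-negative monomial is never dominant, and the Elimination Theorem for the few surviving candidates, one obtains that the only dominant monomials of $\chi_q(\mathcal T_1)\chi_q(\mathcal T_2)$ are $m_+$ (with coefficient $1$) and $S$. Hence $[\mathcal T_1][\mathcal T_2]=[L(m_+)]+c\,[\mathcal S]$ with $c\geq 1$; comparing the coefficient of the monomial $S$ on the two sides --- it equals $1$ in the product since $S$ factors uniquely as $m'm''$ with $m'\in\mathscr M(\mathcal T_1)$, $m''\in\mathscr M(\mathcal T_2)$, and it equals $0$ in $\chi_q(L(m_+))$ by the same Elimination-Theorem analysis --- forces $c=1$. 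The degenerate parameter ranges (small $m$, some $k_i=0$, and the coincidences $\widetilde S=\widetilde T$, $S=T$ recorded in the Remark) are handled by the identical argument after the obvious simplification of the monomials involved.

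The hard part will be steps two and three. In contrast with types $A_n$, $B_n$, $G_2$ and $F_4$, minimal affinizations of type $C_n$ need not be special or anti-special, so one cannot simply run the Frenkel--Mukhin algorithm on an arbitrary module; the delicate point is to leverage the specialness statements of Theorem \ref{The special modules of $C_n$} into a valid application of the truncated-$q$-character machinery, which requires choosing the window $U$ large enough to see $S$ and no spurious dominant monomial, yet small enough for hypothesis (iv) of Theorem \ref{truncated $q$-characters} to be checkable at every node. Establishing Theorem \ref{irreducible} --- eliminating all dominant monomials strictly between $S$ and $m_+$ by iterated use of the Elimination Theorem \ref{Elimination Theorem} --- is where essentially all of the type-$C_n$-specific representation-theoretic difficulty is concentrated.
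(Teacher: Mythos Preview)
Your Step~3 contains a genuine error: the assertion that $\chi_q(\mathcal T_1)\chi_q(\mathcal T_2)$ has exactly two dominant monomials, $m_+$ and $S$, is false. In equation (\ref{eqn1}), for instance, the left-hand product carries $k_1+1$ dominant monomials
\[
M_r=M\prod_{0\le j\le r}A^{-1}_{1,\,-s-2(\sum_{i\ge 2}k_i)-n-2j+2},\qquad -1\le r\le k_1-1,
\]
with $M=m_+$ at $r=-1$ and $S$ at $r=k_1-1$; in equations (\ref{eqn511})--(\ref{eqn5221}) there are analogously $k_n+1$ of them. Consequently the inference $[\mathcal T_1][\mathcal T_2]=[L(m_+)]+c\,[\mathcal S]$ does not follow: a priori any of the intermediate $L(M_r)$ could occur as a composition factor. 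Repairing this would force you to exclude each $L(M_r)$, $0\le r\le k-2$, from the composition series of $\mathcal T_1\otimes\mathcal T_2$---work of the same nature and difficulty as the proof of Theorem \ref{irreducible} you already invoke for the right-hand side, so the detour buys nothing.

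This is also where your route diverges from the paper's. The paper does \emph{not} pass through Theorem \ref{irreducible} to prove Theorem \ref{M system of type Cn}; it argues directly via a single combinatorial lemma (Lemma \ref{lemma1}) which lists, for each of the three products appearing in every equation, the complete set of dominant monomials with their multiplicities (all equal to~$1$). One then simply checks that the dominant monomials on the left are the disjoint union of those on the right: the first right-hand product contributes $M_{-1},\dots,M_{k-2}$ and the second right-hand term (special by Theorem \ref{The special modules of $C_n$}) contributes exactly $M_{k-1}=S$. Since a virtual character with no dominant monomial vanishes, the identity follows immediately. Theorem \ref{irreducible} is established afterward as an independent application of the same classification, and plays no role in the proof of Theorem \ref{M system of type Cn}.
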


Theorem \ref{M system of type Cn} will be proved in Section \ref{proof main1}.

\begin{example}\label{eample1}
The following are some equations in the system of type $C_3$.

\begin{align*}
&[1_{-2}][1_{-4}2_{-1}]=[1_{-4}1_{-2}][2_{-1}]+[2_{-3}2_{-1}],\\
&[1_{-4}1_{-2}][1_{-6}1_{-4}2_{-1}]=[1_{-4}2_{-1}][1_{-6}1_{-4}1_{-2}]+[2_{-5}2_{-3}2_{-1}],\\
&[3_{-2}][3_{-6}2_{-1}]=[2_{-1}][3_{-6}3_{-2}]+[2_{-5}2_{-3}2_{-1}],\\
&[3_{-6}3_{-2}][3_{-10}3_{-6}2_{-1}]=[3_{-6}2_{-1}][3_{-10}3_{-6}3_{-2}]+[2_{-9}2_{-7}2_{-5}2_{-3}2_{-1}],\\
&[3_{-6}1_{0}][3_{-2}]=[1_{0}][3_{-6}3_{-2}]+[2_{-5}2_{-3}1_{0}],\\
&[3_{-6}3_{-2}][3_{-10}3_{-6}1_{0}]=[3_{-6}1_{0}][3_{-10}3_{-6}3_{-2}]+[2_{-3}2_{-9}2_{-7}2_{-5}1_{0}],\\
&[3_{-4}][3_{-8}2_{-3}1_{0}]=[2_{-3}1_{0}][3_{-8}3_{-4}]+[2_{-7}2_{-5}2_{-3}1_{0}],\\
&[3_{-8}3_{-4}][3_{-12}3_{-8}2_{-3}1_{0}]=[3_{-8}2_{-3}1_{0}][3_{-12}3_{-8}3_{-4}]+[2_{-11}2_{-9}2_{-7}2_{-5}2_{-3}1_{0}].
\end{align*}
\end{example}
\ \\
\begin{example}\label{eample1}
The following are some equations in the system of type $C_4$.

\begin{align*}
&[4_{-3}][4_{-7}1_{0}]=[1_{0}][4_{-7}4_{-3}]+[3_{-6}3_{-4}1_{0}],\\
&[4_{-5}][4_{-9}2_{-3}1_{0}]=[2_{-3}1_{0}][4_{-9}4_{-5}]+[3_{-8}3_{-6}2_{-3}1_0],\\
&[4_{-7}1_0][4_{-11}2_{-5}1_{-2}1_0]=[2_{-5}1_{-2}1_0][4_{-11}4_{-7}1_0]+[3_{-10}3_{-8}2_{-5}1_{-2}1^2_{0}],\\
&[4_{-9}1_{-2}1_0][4_{-13}2_{-7}1_{-4}1_{-2}1_0]=[2_{-7}1_{-4}1_{-2}1_0][4_{-13}4_{-9}1_{-2}1_0]+[3_{-12}3_{-10}2_{-7}1_{-4}1^{2}_{-2}1^{2}_{0}].
\end{align*}
\end{example}
\ \\
Furthermore, we have the following theorem.
\begin{theorem}\label{irreducible}
For each equation in Theorem \ref{M system of type Cn}, all summands on the right hand side are simple.
\end{theorem}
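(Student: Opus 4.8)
## Proof proposal for Theorem \ref{irreducible}

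The plan is to show that each module appearing as a factor on the right-hand side of the equations in Theorem \ref{M system of type Cn} is simple by exhibiting it as the simple module attached to a dominant monomial, i.e.\ by showing that each product of highest $l$-weights occurring on the right-hand side is itself the highest $l$-weight of a \emph{simple} module and that the corresponding $L(m)$ coincides with the tensor-product factor in question. Concretely, the right-hand sides involve two types of summands: products of two minimal affinizations $\mathcal{T}^{(s)}_{\ldots}$ (or $\widetilde{\mathcal{T}}^{(s)}_{\ldots}$), and the single modules $\mathcal{S}^{(s)}_{\ldots}$, $\widetilde{\mathcal{S}}^{(s)}_{\ldots}$. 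For the latter, simplicity is essentially built into the definition together with Theorem \ref{The special modules of $C_n$}: the modules $\widetilde{\mathcal{S}}^{(s)}_{k_1,\ldots,k_m,0,\ldots,0,k_l,0,\ldots,0,k_{n-1},0}$ appearing in \eqref{eqn5211} and \eqref{eqn5221} are among the special modules listed there, hence simple. So the real content is the claim that the \emph{products} of two minimal affinizations appearing on the right-hand sides are simple, equivalently that $L(T_1 T_2) \cong L(T_1)\otimes L(T_2)$ for the relevant pairs.

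First I would reduce, via Lemma \ref{contains in a larger set}, to showing $\chi_q(L(T_1))\chi_q(L(T_2))$ has a single dominant monomial, namely $T_1 T_2$; equivalently that $\mathscr M(L(T_1))\cdot\mathscr M(L(T_2))$ contains no dominant monomial other than $T_1T_2$. Since by Theorem \ref{special} and Theorem \ref{The special modules of $C_n$} all the relevant $\mathcal{T}$'s and $\widetilde{\mathcal T}$'s are special, each $\mathscr M(L(T_i))$ has a unique dominant monomial $T_i$ and every other monomial is $T_i$ times a nontrivial element of $\mathcal Q^{-}$; moreover each such non-highest monomial is right-negative (a product of $A_{j,b}^{-1}$'s applied to $T_i$ is right-negative, and anything below a right-negative monomial is right-negative). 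The key point is then the following: if $m_1 m_2$ is dominant with $m_1\in\mathscr M(L(T_1))$, $m_2\in\mathscr M(L(T_2))$ and, say, $m_1\neq T_1$, then $m_1$ is right-negative, so there is a variable $Y_{j,aq^L}^{-1}$ with $L$ maximal appearing in $m_1$; to cancel it, $m_2$ must contain $Y_{j,aq^L}^{+1}$ with the same spectral parameter at the top of its own support, forcing constraints on the two strings of spectral parameters. Analyzing the explicit spectral parameters occurring in $T^{(s)}_{k_1,\ldots,k_n}$ and $\widetilde T^{(s)}_{k_1,\ldots,k_n}$ — which are the arithmetic progressions written out in Section \ref{definition of minimal affinizations} — one checks for each of the finitely many shapes of pairs $(T_1,T_2)$ on the right-hand sides of \eqref{eqn1}–\eqref{eqn5221} that these constraints cannot be met unless $m_1=T_1$ and $m_2=T_2$.

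In practice I would organize this as a case analysis matching the structure of Theorem \ref{M system of type Cn}: equations \eqref{eqn1}–\eqref{eqn4} have right-hand sides of the form $[\mathcal T_3][\mathcal T_4]$ with the $\mathcal T$'s of type $\mathcal T^{(s)}_{k_1,\ldots,k_{n-1},0}$, and \eqref{eqn511}–\eqref{eqn5221} have right-hand sides $[\widetilde{\mathcal T}_3][\widetilde{\mathcal T}_4]$ or a single $[\widetilde{\mathcal S}]$. For the $\widetilde{\mathcal S}$-summands nothing is needed beyond Theorem \ref{The special modules of $C_n$}. For each product summand, I would use the known $q$-character estimate $\mathscr M(L(m_+))\subset m_+\mathcal Q^{-}$, the right-negativity propagation, and the precise top spectral parameters of the two factors (computed from the displayed formulas for $T^{(s)}$ and $\widetilde T^{(s)}$) to rule out extra dominant monomials; for the few cases where the naive argument is not immediately conclusive, Theorem \ref{Elimination Theorem} or the truncated $q$-character criterion of Theorem \ref{truncated $q$-characters} can be invoked to pin down the relevant monomials. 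The main obstacle I anticipate is bookkeeping: verifying in every case that the spectral-parameter supports of the two factors on a given right-hand side are in ``general position'' enough that no cancellation produces a new dominant monomial — this is conceptually routine given the tools above but requires carefully tracking the shifts $s, s\pm4, s+4k_n$, etc., that appear throughout \eqref{eqn511}–\eqref{eqn5221}, and handling the low-rank boundary cases ($i+1=j$, $m=l$, $k_{n-1}=2$, etc.) separately.
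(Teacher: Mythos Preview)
Your reduction to ``show $\chi_q(L(T_1))\chi_q(L(T_2))$ has only one dominant monomial'' is in fact false for the \emph{first} summand on the right-hand side of each equation, so that line of attack cannot succeed there. Look at Table~\ref{dominant monomials in the M-system of type C_n}: for the first summand of, e.g., equation~\eqref{eqn512}, the product $\chi_q(\widetilde{\mathcal T}^{(s)}_{\ldots,k_{n-1},k_n-1})\chi_q(\widetilde{\mathcal T}^{(s-4)}_{\ldots,k_{n-1}-2,k_n+1})$ contains the whole family of dominant monomials $M_r=M\prod_{0\le j\le r}A^{-1}_{n,s+4k_n-4j-6}$ for $-1\le r\le k_n-2$, not just $M$. (The analogous thing happens for the first summands of \eqref{eqn1}--\eqref{eqn5221}.) Your right-negativity argument actually explains \emph{why} these extra dominants appear rather than ruling them out: the top negative variable in a non-highest monomial of one factor lines up exactly with a positive variable in the highest monomial of the other, and cancels.

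What the paper does instead for the first summands is the opposite direction: accept that there are several dominant monomials $M=M_{-1},M_0,\ldots$ in the product, and then prove that none of the $L(M_r)$ with $r\ge 0$ can occur as a composition factor, by exhibiting for each $r$ an explicit monomial in $\chi_q(L(M_r))$ (obtained by a $U_q\widehat{\mathfrak{sl}}_2$-argument, e.g.\ $n_1=M_0A^{-1}_{n,s+4k_n-6}=M A^{-2}_{n,s+4k_n-6}$) and checking that this monomial does \emph{not} lie in $\chi_q(L(T_1))\chi_q(L(T_2))$. Your approach \emph{is} correct for the second summands (and this is exactly the Corollary at the start of Section~\ref{proof irreducible}), but for the first summands you need this additional ``exclude extra constituents by finding a missing monomial'' step.
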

Theorem \ref{irreducible} will be proved in Section \ref{proof irreducible}.

\subsection{A system corresponding to the system in Theorem \ref{M system of type Cn}}
For $k_{1},\ldots,k_{n} \in \mathbb{Z}_{\geq 0}$, $s \in \mathbb{Z}$, let
\[
\mathfrak{m}_{k_1,\ldots,k_n} =\res(\mathcal{T}_{k_1,\ldots,k_n}^{(s)}) \ (\text{resp. } \widetilde{\mathfrak{m}}_{k_1,\ldots,k_n} =\res(\widetilde{\mathcal{T}}_{k_1,\ldots,k_n}^{(s)}))
\]
be the restriction of $\mathcal{T}_{k_1,\ldots,k_n}^{(s)}$\ (resp. $\widetilde{\mathcal{T}}_{k_1,\ldots,k_n}^{(s)}$) to $U_q \mathfrak{g}$. It is clear that $\res(\mathcal{T}_{k_1,\ldots,k_n}^{(s)})$ and $\res(\widetilde{\mathcal{T}}_{k_1,\ldots,k_n}^{(s)})$ do not depend on $s$. Let $\chi(M)$\ (resp. $\chi(\widetilde{M})$) be the character of a $U_q \mathfrak{g}$-module $M$\ (resp. $\widetilde{M}$). By replacing each $[\mathcal{T}_{k_1,\ldots,k_n}^{(s)}]$ (resp. $[\mathcal{\widetilde{T}}_{k_1,\ldots,k_n}^{(s)}]$) in the system in Theorem \ref{M system of type Cn} with  $\chi(\mathfrak{m}_{k_1,\ldots,k_n})$ (resp. $\chi(\widetilde{\mathfrak{m}}_{k_1,\ldots,k_n})$), we obtain a system of equations consisting of the characters of $U_{q}\mathfrak{g}$-modules. The following are two equations in the system.
\begin{equation*}
\begin{split}
&\chi(\mathfrak m_{k_{1},k_{2}-1,k_{3},\ldots,k_{n-1},0})\chi(\mathfrak m_{k_{1},k_{2},k_{3},\ldots,k_{n-1},0})=\chi(\mathfrak m_{k_{1}-1,k_{2},k_{3},\ldots,k_{n-1},0})\chi(\mathfrak m_{k_{1}+1,k_{2}-1,k_{3},\ldots,k_{n-1},0})+\chi(\mathfrak m_{0,k_{2}-1,k_{3},\ldots,k_{n-1},0})\chi(\mathfrak m_{0,k_{1}+k_{2},k_{3},\ldots,k_{n-1},0}),\\
&\chi(\widetilde{\mathfrak m}_{k_1,\ldots,k_m-1,0,\ldots,0,k_n})\chi(\widetilde{\mathfrak m}_{k_1,\ldots,k_m,0,\ldots,0,1,k_n})=\chi(\widetilde{\mathfrak m}_{k_1,\ldots,k_m,0,\ldots,0,1,k_n-1})\chi(\widetilde{\mathfrak m}_{k_1,\ldots,k_m-1,0,\ldots,0,k_n+1})
+\chi(\widetilde{\mathfrak m}_{k_1,\ldots,k_m-1,0,\ldots,0,0})\chi(\widetilde{\mathfrak m}_{k_1,\ldots,k_m,0,\ldots,0,{2k_n+1},0}).
\end{split}
\end{equation*}

\section{Relation between the system in Theorem \ref{M system of type Cn} and cluster algebras}\label{Relation between the M-systems and cluster algebras}

In this section, we will show that the equations in the system in Theorem \ref{M system of type Cn} correspond to mutations in some cluster algebra $\mathscr{A}$. Moreover, every minimal affinization in the system in Theorem \ref{M system of type Cn}  corresponds to a cluster variable in the cluster algebra $\mathscr{A}$.
\subsection{Definition of cluster algebras $\mathscr{A}$}\label{definition of cluster algebra A}

Let $I_1=\{1,2,\ldots,n-1 \}$ and
\begin{align*}
&S_{1}=\{-2i-1\mid i\in \mathbb{Z}_{\geq 0}\},\quad  S_{2}=\{-2i\mid i\in \mathbb{Z}_{\geq 0}\}, \\
&S_{3}=\{2i+1\mid i\in \mathbb{Z}_{\geq 0}\},\ \ \quad  S_{4}=\{2i\mid i\in \mathbb{Z}_{\geq 0}\},\\
&S_{5}=\{-4i-1\mid i\in \mathbb{Z}_{\geq 0}\},\quad  S_{6}=\{-4i-3\mid i\in \mathbb{Z}_{\geq 0}\},\\
&S_{7}=\{-4i\mid i\in \mathbb{Z}_{\geq 0}\},\ \ \ \ \ \quad S_{8}=\{-4i-2\mid i\in \mathbb{Z}_{\geq 0}\}.
\end{align*}
Let
\begin{gather}
\begin{align*}
V=((S_3\cap I_1)\times S_2)\cup((S_4\cap I_1)\times S_1)\cup((\{n\}\cap S_3)\times S_7)\cup((\{n\}\cap S_3)\times S_8)\cup((\{n\}\cap S_4)\times S_5)\cup((\{n\}\cap S_4)\times S_6).
\end{align*}
\end{gather}
A quiver $Q$ for $U_q \hat{\mathfrak{g}}$ of type $C_{n}$ with vertex set $V$ will be defined as follows. The arrows of $Q$ are given by the following rule: there is an arrow from the vertex $(i,r)$ to the vertex $(j,s)$ if and only if $b_{ij}\neq 0$ and $s=r+b_{ij}-d_{i}+d_{j}$. The quiver $Q$ of type $C_{n}$ is the same as the quiver $G^-$ of type $C_n$ in \cite{HL13}.

Let ${\bf t}={\bf t}_{1}\cup {\bf t}_{2}$, where
\begin{align}\label{at1}
{\bf t_1} =\{t^{(-n+i-[i]_{\text{mod} 2}+1)}_{0,\ldots,0,k_i,0,\ldots,0}\mid i\in\{1,\ldots,n-1\},\ k_i\in \mathbb{Z}_{\geq0}\},
\end{align}

\begin{gather}\label{at2}
\begin{align}
{\bf t_2} =
\begin{cases}
     \{\widetilde{t}^{(-4k_{n}+4)}_{0,\ldots, 0, \ldots,0, k_n},\  \widetilde{t}^{(-4k_{n}+2)}_{0,\ldots, 0, \ldots,0, k_n}\mid n \text{ is } \text{odd},\ k_{n}\in \mathbb{Z}_{\geq1} \},\\
      \\
     \{\widetilde{t}^{(-4k_{n}+3)}_{0,\ldots, 0, \ldots,0, k_n},\  \widetilde{t}^{(-4k_{n}+1)}_{0,\ldots, 0, \ldots,0, k_n}\mid n \text { is } \text{even},\ k_{n}\in \mathbb{Z}_{\geq1} \}, \\
     \\
     \{\widetilde{t}^{(-2k_i-n+i+[i]_{\text{mod} 2}+1)}_{0,\ldots, 0,k_i,0, \ldots,0}\mid i\in\{1,\ldots,n-1\},\ k_i\in \mathbb{Z}_{\geq0}\}.
     \end{cases}
\end{align}
\end{gather}

Let $\mathscr{A}$ be the cluster algebra defined by the initial seed $({\bf t}, Q)$. The cluster algebra $\mathscr{A}$ of type $C_{n}$ is the same as the cluster algebra for $U_q \hat{\mathfrak{g}}$ of type $C_{n}$ introduced in \cite{HL13}.

\subsection{Mutation sequences} For the quiver of type $C_{n}$. We use $``C_{2i-1}"$ to denote the column of vertices $(2i-1, 0)$, $(2i-1, -2)$, $(2i-1, -4)$, $\ldots$ in $Q$, where $i\in \mathbb{Z}_{\geq1}$. We use $``C_{2i}"$ to denote the column of vertices $(2i, -1)$, $(2i, -3)$, $(2i, -5),\ldots$ in $Q$, where $i\in \mathbb{Z}_{\geq1}$. If $n$ is even, we use $``C_{n}"$ to denote the column of vertices $(n, -1)$, $(n, -5),\ldots, (n, -4i-1),\ldots$ in $Q$, where $i\in \mathbb{Z}_{\geq0}$ and $``C_{n+1}"$ denote the column of vertices $(n, -3)$, $(n, -7),\ldots, (n, -4i-3),\ldots$ in $Q$, where $i\in \mathbb{Z}_{\geq0}$. If $n$ is odd, we use $``C_{n}"$ to denote the column of vertices $(n, 0)$, $(n, -4),\ldots, (n, -4i),\ldots$ in $Q$, where $i\in \mathbb{Z}_{\geq0}$ and $``C_{n+1}"$ denote the column of vertices $(n, -2)$, $(n, -6), \ldots,(n, -4i-2),\ldots$ in $Q$, where $i\in \mathbb{Z}_{\geq0}$. Let $C_{1}, \ldots, C_{n}, C_{n+1}$ be the columns of the quiver. By saying that we mutate the column $C_i$, $i \in \{1,\ldots, n+1\}$, we mean that we mutate the vertices of $C_i$ as follows. First we mutate at the first vertex of $C_i$, then the second vertex of $C_i$, an so on until the vertex at infinity. By saying that the mutate $(C_{i_1}, C_{i_2}, \ldots, C_{i_m} )$, where $i_1, \ldots, i_m \in \{1, 2, \ldots, n+1\}$, we mean that we first mutate the column $C_{i_1}$, then the column $C_{i_2}$, an so on up to the column $C_{i_m}$.

\textbf{Case 1.}  Let $k_{1},k_{2},\ldots,k_{n} \in \mathbb{Z}_{\geq 0}$ and let $k_l$ be the first non-zero integer in $k_1, k_2, \ldots, k_{n-1}$ from the right. We define some variables $t_{k_{1},k_{2},\ldots,k_{n-1},0}^{(s)}$, where
\begin{align*}
s=-n+l-[l]_{\text{mod} 2}+1,
\end{align*}
recursively as follows. The variables $t^{(s)}_{0,\ldots, 0,k_{i},0, \ldots, 0}$ ($k_{i}\in \mathbb{Z}_{\geq0}$) in \ref{at1} are already defined. They are cluster variables in the initial seed of $\mathscr{A}$ define in Section \ref{definition of cluster algebra A}.

We use $\emptyset$ to denote the empty mutation sequence, and use
\begin{align*}
\prod_{k=1}^{\frac{n-1}{2}}(C_{2k-1}, C_{2k-2}, C_{2k-3}, \ldots, C_{1})
\end{align*}
to denote the mutation sequence
\begin{align*}
(C_{1}; C_3, C_2, C_1;\ldots;C_{n-4},C_{n-5},\ldots,C_{1};C_{n-2},C_{n-3},\ldots,C_{1}).
\end{align*}

Let
\begin{align*}
M_{l}^{(1)} =
\begin{cases}
    \emptyset,  & l=1,\ 2 , \\
    \\
    \prod_{k=1}^{\frac{l-1}{2}}(C_{2k-1}, C_{2k-2}, C_{2k-3}, \ldots, C_{1}),  & l \equiv 1 \pmod2,\ 1<l\leq n-1,  \\
    \\
     \prod_{k=1}^{\frac{l}{2}}(C_{2k-1},C_{2k-2},C_{2k-3},\ldots,C_{1}) , & l\equiv 0\pmod2,\ 2<l\leq n-1.
     \end{cases}
\end{align*}

Let $k_1, k_2, \ldots, k_{n-1} \in \mathbb{Z}_{\geq 0}$ and $k_l$ be the first non-zero integer in $k_1, k_2, \ldots, k_{n-1}$ from the right.
 Let $\seq$ be the mutation sequence: first we mutate $M_l^{(1)}$ starting from the initial quiver $Q$, then we mutate $(C_{l-1},C_{l-2},\ldots,C_{1})$ $k_{l}$ times, and then we mutate $(C_{l-2},C_{l-3},\ldots,C_{1})$ $k_{l-1}$ times; continue this procedure, we mutate $(C_{t-1},C_{t-2},\ldots,C_{1})$ $k_{t}$ times, $t=l-2, l-3, \ldots,2$. If $k_t = 0$, then ``we mutate $(C_{t-1},C_{t-2},\ldots,C_{1})$ $k_{t}$ times" means ``we do not mutate $(C_{t-1},C_{t-2},\ldots,C_{1})$".

We define
\begin{align}\label{variables 1}
\begin{split}
t^{(-n+l-[l]_{\text{mod} 2}+1)}_{0,\ldots,0,k_{p},0,\ldots,0,k_{q},k_{q+1},\ldots,k_{n-1},0}& = t'^{(-n+l-[l]_{\text{mod} 2}+1)}_{0,\ldots,0,k_{p},0,\ldots,k_{q}-1,k_{q+1},\ldots,k_{n-1},0} \quad (1\leq p<q\leq n-1),\\
\end{split}
\end{align}
where
\begin{gather}\label{variables 2}
\begin{split}
&t'^{(-n+l-[l]_{\text{mod} 2}+1)}_{k_{1},k_{2}-1,k_3,\ldots,k_{n-1},0}\\
&=\frac{t^{(-n+l-[l]_{\text{mod} 2}+1)}_{k_{1}-1,k_{2},k_3,\ldots,k_{n-1},0}t^{(-n+l-[l]_{\text{mod} 2}+1)}_{k_{1}+1,k_{2}-1,k_3,\ldots,k_{n-1},0}+
t^{(-n+l-[l]_{\text{mod} 2}+1)}_{0,k_{1}+k_{2},k_{3},\ldots,k_{n-1},0}
t^{(-n+l-[l]_{\text{mod} 2}+1)}_{0,k_{2}-1,k_{3},\ldots,k_{n-1},0}}{t^{(-n+l-[l]_{\text{mod} 2}+1)}_{k_{1},k_{2}-1,k_3,\ldots,k_{n-1},0}},
\end{split}
\end{gather}
where $k_1, k_2\geq1;$
\begin{gather}\label{variables 3}
\begin{split}
&t'^{(-n+l-[l]_{\text{mod} 2}+1)}_{0,\ldots,0,k_{i},k_{i+1}-1,k_{i+2},\ldots,k_{n-1},0}\\
&=\frac{t^{(-n+l-[l]_{\text{mod} 2}+1)}_{0,\ldots,0,k_{i}-1,k_{i+1},k_{i+2},\ldots,k_{n-1},0}
t^{(-n+l-[l]_{\text{mod} 2}+1)}_{0,\ldots, 0,k_{i}+1,k_{i+1}-1,k_{i+2}\ldots,k_{n-1},0}+
t^{(-n+l-[l]_{\text{mod} 2}+1)}_{0,\ldots,0,\underset{i+1}{k_{i}+k_{i+1}},k_{i+2},\ldots,k_{n-1},0}t^{(-n+l-[l]_{\text{mod} 2}+1)}_{0,\ldots,0,\underset{i-1}{k_{i}},0,\underset{i+1}{k_{i+1}-1},\ldots,k_{n-1},0}}{t^{(-n+l-[l]_{\text{mod} 2}+1)}_{0,\ldots,0,k_{i},k_{i+1}-1,k_{i+2},\ldots,k_{n-1},0}},
\end{split}
\end{gather}
where $k_{i},k_{i+1}\geq1$, $1 < i\leq n-1;$
\begin{gather}\label{variables 4}
\begin{split}
&t'^{(-n+l-[l]_{\text{mod} 2}+1)}_{k_{1},0,\ldots,0,k_{j}-1,k_{j+1},\ldots,k_{n-1},0}\\
&=\frac{t^{(-n+l-[l]_{\text{mod} 2}+1)}_{k_{1}+1,0,\ldots,0,k_{j}-1,k_{j+1},\ldots,k_{n-1},0}
t^{(-n+l-[l]_{\text{mod} 2}+1)}_{k_{1}-1,0,\ldots,0,k_{j},k_{j+1}\ldots,k_{n-1},0}+
t^{(-n+l-[l]_{\text{mod} 2}+1)}_{0,k_{1},0,\ldots,0,k_{j},k_{j+1},\ldots,k_{n-1},0}t^{(-n+l-[l]_{\text{mod} 2}+1)}_{0,\ldots,0,k_{j}-1,k_{j+1},\ldots,k_{n-1},0}}{t^{(-n+l-[l]_{\text{mod} 2}+1)}_{k_{1},0,\ldots,0,k_{j}-1,k_{j+1},\ldots,k_{n-1},0}},
\end{split}
\end{gather}
where $k_1, k_j>0$, $2<j\leq n-1$;
\begin{gather}\label{variables 5}
\begin{split}
&t'^{(-n+l-[l]_{\text{mod} 2}+1)}_{0,\ldots,0,\underset{i}{k_{i}},0,\ldots,0,\underset{j}{k_{j}-1},k_{j+1},\ldots,k_{n-1},0}\\
&=\frac{t^{(-n+l-[l]_{\text{mod} 2}+1)}_{0,\ldots, 0,\underset{i}{k_{i}-1},0,\ldots,0,\underset{j}{k_{j}},k_{j+1},\ldots,k_{n-1},0}
t^{(-n+l-[l]_{\text{mod} 2}+1)}_{0,\ldots, 0,\underset{i}{k_{i}+1},0,\ldots,0,\underset{j}{k_{j}-1},k_{j+1}\ldots,k_{n-1},0}+
t^{(-n+l-[l]_{\text{mod} 2}+1)}_{0,\ldots,0,\underset{i+1}{k_{i}},0, \ldots,0,\underset{j}{k_{j}},k_{j+1},\ldots,k_{n-1},0}
t^{(-n+l-[l]_{\text{mod} 2}+1)}_{0,\ldots,0,\underset{i-1}{k_{i}},0,\ldots,0,\underset{j}{k_{j}-1},\ldots,k_{n-1},0}}{t^{(-n+l-[l]_{\text{mod} 2}+1)}_{0,\ldots,0,\underset{i}{k_{i}},0,\ldots,0,\underset{j}{k_{j}-1},k_{j+1},\ldots,k_{n-1},0}},
\end{split}
\end{gather}
where $k_i, k_j>0$, $2< i+1< j\leq n-1$;
are mutation equations which occur when we mutate $\seq$. The variables (\ref{variables 1}) are defined in the order according to the mutation sequence $\seq$. In this order, every variable in (\ref{variables 1}) is defined by an equation of (\ref{variables 2})--(\ref{variables 5}) using variables in ${\bf t}$ and those variables in (\ref{variables 1}) which are already defined.

\textbf{Case 2.}  For $k_{1},k_{2},\ldots,k_{n} \in \mathbb{Z}_{\geq 0}$, let $k_r$ be the first non-zero integer in $k_{n-1}, k_{n-2}, \ldots, k_{1}$ from the right.

Let
\begin{align*}
N_{n, r}^{(1)} =
\begin{cases}
    \emptyset,  & r=n, \\
    \\
    C_{n}, &n \equiv 1 \pmod2,\ r=n-1, \\
    \\
    \prod_{k=1}^{\frac{n-r}{2}}(C_{n-2k+2}, C_{n-2k+3}, \ldots, C_{n-1}, C_{n-[k]_{\text{mod} 2}+1}),  &n \equiv 1 \pmod2 ,\ r \equiv 1 \pmod2,\\ & r\leq n-2,
    \\
    \prod_{k=1}^{\frac{n-r+1}{2}}(C_{n-2k+2},C_{n-2k+3}, \ldots, C_{n-1}, C_{n-[k]_{\text{mod} 2}+1}) , & n \equiv 1 \pmod2 ,\ r \equiv 0 \pmod2,\\ & r< n-2,
    \\
    \prod_{k=1}^{\frac{n-r-1}{2}}(C_{n-2k+1},C_{n-2k+2}, \ldots, C_{n-1}, C_{n-[k]_{\text{mod} 2}+1}) , & n \equiv 0 \pmod2 ,\ r \equiv 1 \pmod2,\\ & r< n-2,\\
    \prod_{k=1}^{\frac{n-r}{2}}(C_{n-2k+1}, C_{n-2k+2}, \ldots, C_{n-1}, C_{n-[k]_{\text{mod} 2}+1}),  &n \equiv 0 \pmod2 ,\ r \equiv 0 \pmod2,\\ & r\leq n-2.  \\
     \end{cases}
\end{align*}


Let $S$, $S'$ be two mutation sequences. We use $\underset{2k+1}{\underbrace{SS'\cdots SS'S}}$ to denote the mutation sequence $ SS'\cdots SS'S $, where the number of $S$ is $k+1$ and use $\underset{2k}{\underbrace{SS'\cdots SS'}}$ to denote the mutation sequence $ SS'\cdots SS' $, where the number of $S$ is $k$.

Let $k_{1}, \ldots, k_{n-1} \in \mathbb{Z}_{\geq 0}, k_n \in \mathbb{Z}_{\geq1}$ and let $k_r$ be the first non-zero integer in $k_{n-1}, \ldots, k_{1}$ from the right. For $n$ is even and $r=n-1\neq0$, let $\seq$ be the mutation sequence: we mutate
\begin{align*}
\begin{cases}
\underset{k_r}{\underbrace{(C_{n},C_{n+1},\ldots,C_{n},C_{n+1},C_{n})}},\ &\text{if}\quad k_r \equiv 1 \pmod2 ,\\
\underset{k_r}{\underbrace{(C_{n},C_{n+1},\ldots,C_{n},C_{n+1})}},\ &\text{if}\quad k_r \equiv 0 \pmod2,\\
\end{cases}
\end{align*}
starting from the initial quiver $Q$.

For $n$ is odd and $r=n-1\neq0$, let $\seq$ be the mutation sequence: first we mutate $N_{n, r}^{(1)}$ starting from the initial quiver $Q$, then we mutate
\begin{align*}
\begin{cases}
\underset{k_r}{\underbrace{(C_{n+1},C_n,\ldots,C_{n+1},C_n,C_{n+1})}},\ &\text{if}\quad k_r \equiv 1 \pmod2 ,\\
\underset{k_r}{\underbrace{(C_{n+1},C_n,\ldots,C_{n+1},C_{n})}},\ &\text{if}\quad k_r \equiv 0 \pmod2.\\
\end{cases}
\end{align*}

For $1\leq r\leq n-2$, let $\seq$ be the mutation sequence: first we mutate $N_{n, r}^{(1)}$ starting from the initial quiver $Q$, then we mutate
\begin{table}[H] \resizebox{.6\width}{.7\height}{
\begin{tabular}{|c|c|c|c|c|c|c|c|c|}
\hline

\  &\multicolumn{2}{c|}{$\frac{n-r}{2}$ is odd}  &\multicolumn{2}{c|}{$\frac{n-r}{2}$ is even}  &\multicolumn{2}{c|}{$\frac{n-r+1}{2}$ is odd} & \multicolumn{2}{c|}{$\frac{n-r+1}{2}$ is even}  \\
\hline
                \  & $k_r$ is odd & $k_r$ is even & $k_r$ is odd & $k_r$ is even & $k_r$ is odd & $k_r$ is even & $k_r$ is odd & $k_r$ is even \\
\hline
                $n$ is odd,\ $r$ is odd & $\underset{k_r}{\underbrace{S'_rS_r\cdots S'_rS_rS'_r}}$ & $\underset{k_r}{\underbrace{S'_rS_r\cdots S'_rS_r}}$ & $\underset{k_r}{\underbrace{S_rS'_r\cdots S_rS'_rS_r}}$ & $\underset{k_r}{\underbrace{S_rS'_r\cdots S_rS'_r}}$ & \  & \  & \  & \  \\
\hline
                $n$ is odd,\ $r$ is even & \  & \  & \  & \  & $\underset{k_r}{\underbrace{S'_rS_r\cdots S'_rS_rS'_r}}$ & $\underset{k_r}{\underbrace{S'_rS_r\cdots S'_rS_r}}$ & $\underset{k_r}{\underbrace{S_rS'_r\cdots S_rS'_rS_r}}$ & $\underset{k_r}{\underbrace{S_rS'_r\cdots S_rS'_r}}$ \\
\hline
                $n$ is even,\ $r$ is even & $\underset{k_r}{\underbrace{S'_rS_r\cdots S'_rS_rS'_r}}$ & $\underset{k_r}{\underbrace{S'_rS_r\cdots S'_rS_r}}$ & $\underset{k_r}{\underbrace{S_rS'_r\cdots S_rS'_rS_r}}$ & $\underset{k_r}{\underbrace{S_rS'_r\cdots S_rS'_r}}$ & \  & \  & \  & \  \\
\hline
                $n$ is even,\ $r$ is odd & \  & \  & \  & \  & $\underset{k_r}{\underbrace{S_rS'_r\cdots S_rS'_rS_r}}$ & $\underset{k_r}{\underbrace{S_rS'_r\cdots S_rS'_r}}$ & $\underset{k_r}{\underbrace{S'_rS_r\cdots S'_rS_rS'_r}}$ & $\underset{k_r}{\underbrace{S'_rS_r\cdots S'_rS_r}}$ \\
                \hline
\end{tabular}}
\end{table}
where $S_r=(C_{r+1},C_{r+2},\ldots,C_{n-1},C_n),\ S'_{r}=(C_{r+1},C_{r+2},\ldots,C_{n-1},C_{n+1})$. Then we mutate
\begin{table}[H] \resizebox{.4\width}{.6\height}{
\begin{tabular}{|c|c|c|c|c|c|c|c|c|}
\hline

\  &\multicolumn{2}{c|}{$(\frac{n-r}{2}+k_r)$ is odd}  &\multicolumn{2}{c|}{$(\frac{n-r}{2}+k_r)$ is even}  &\multicolumn{2}{c|}{$(\frac{n-r+1}{2}+k_r)$ is odd} & \multicolumn{2}{c|}{$(\frac{n-r+1}{2}+k_r)$ is even}  \\
\hline
\  & $k_{r+1}$ is odd & $k_{r+1}$ is even & $k_{r+1}$ is odd & $k_{r+1}$ is even & $k_{r+1}$ is odd & $k_{r+1}$ is even & $k_{r+1}$ is odd & $k_{r+1}$ is even \\
\hline
$n$ is odd,\ $r$ is odd & $\underset{k_{r+1}}{\underbrace{S'_{r+1}S_{r+1}\cdots S'_{r+1}S_{r+1}S'_{r+1}}}$ & $\underset{k_{r+1}}{\underbrace{S'_{r+1}S_{r+1}\cdots S'_{r+1}S_{r+1}}}$ & $\underset{k_{r+1}}{\underbrace{S_{r+1}S'_{r+1}\cdots S_{r+1}S'_{r+1}S_{r+1}}}$ & $\underset{k_{r+1}}{\underbrace{S_{r+1}S'_{r+1}\cdots S_{r+1}S'_{r+1}}}$ & \  & \  & \  & \  \\
\hline
$n$ is odd,\ $r$ is even & \  & \  & \  & \  & $\underset{k_{r+1}}{\underbrace{S'_{r+1}S_{r+1}\cdots S'_{r+1}S_{r+1}S'_{r+1}}}$ & $\underset{k_{r+1}}{\underbrace{S'_{r+1}S_{r+1}\cdots S'_{r+1}S_{r+1}}}$ & $\underset{k_{r+1}}{\underbrace{S_{r+1}S'_{r+1}\cdots S_{r+1}S'_{r+1}S_{r+1}}}$ & $\underset{k_{r+1}}{\underbrace{S_{r+1}S'_{r+1}\cdots S_{r+1}S'_{r+1}}}$ \\
\hline
$n$ is even,\ $r$ is even & $\underset{k_{r+1}}{\underbrace{S'_{r+1}S_{r+1}\cdots S'_{r+1}S_{r+1}S'_{r+1}}}$ & $\underset{k_{r+1}}{\underbrace{S'_{r+1}S_{r+1}\cdots S'_{r+1}S_{r+1}}}$ & $\underset{k_{r+1}}{\underbrace{S_{r+1}S'_{r+1}\cdots S_{r+1}S'_{r+1}S_{r+1}}}$ & $\underset{k_{r+1}}{\underbrace{S_{r+1}S'_{r+1}\cdots S_{r+1}S'_{r+1}}}$ & \  & \  & \  & \  \\
\hline
$n$ is even,\ $r$ is odd & \  & \  & \  & \  & $\underset{k_{r+1}}{\underbrace{S_{r+1}S'_{r+1}\cdots S_{r+1}S'_{r+1}S_{r+1}}}$ & $\underset{k_{r+1}}{\underbrace{S_{r+1}S'_{r+1}\cdots S_{r+1}S'_{r+1}}}$ & $\underset{k_{r+1}}{\underbrace{S'_{r+1}S_{r+1}\cdots S'_{r+1}S_{r+1}S'_{r+1}}}$ & $\underset{k_{r+1}}{\underbrace{S'_{r+1}S_{r+1}\cdots S'_{r+1}S_{r+1}}}$\\
\hline
\end{tabular}}
\end{table}
where $S_{r+1}=(C_{r+2},C_{r+3},\ldots,C_{n-1},C_n),\ S'_{r+1}=(C_{r+2},C_{r+3},\ldots,C_{n-1},C_{n+1})$. Continue this procedure, we mutate
\begin{table}[H] \resizebox{.6\width}{.7\height}{
\begin{tabular}{|c|c|c|c|c|c|c|c|c|}
\hline

\  &\multicolumn{2}{c|}{$(\frac{n-r}{2}+k_r+\cdots+k_{t-1})$ is odd}  &\multicolumn{2}{c|}{$(\frac{n-r}{2}+k_r+\cdots+k_{t-1})$ is even}  &\multicolumn{2}{c|}{$(\frac{n-r+1}{2}+k_r+\cdots+k_{t-1})$ is odd} & \multicolumn{2}{c|}{$(\frac{n-r+1}{2}+k_r+\cdots+k_{t-1})$ is even}  \\
\hline
                \  & $k_t$ is odd & $k_t$ is even & $k_t$ is odd & $k_t$ is even & $k_t$ is odd & $k_t$ is even & $k_t$ is odd & $k_t$ is even \\
\hline
                $n$ is odd,\ $r$ is odd & $\underset{k_t}{\underbrace{S'_tS_t\cdots S'_tS_tS'_t}}$ & $\underset{k_t}{\underbrace{S'_tS_t\cdots S'_tS_t}}$ & $\underset{k_t}{\underbrace{S_tS'_t\cdots S_tS'_tS_t}}$ & $\underset{k_t}{\underbrace{S_tS'_t\cdots S_tS'_t}}$ & \  & \  & \  & \  \\
\hline
                $n$ is odd,\ $r$ is even & \  & \  & \  & \  & $\underset{k_t}{\underbrace{S'_tS_t\cdots S'_tS_tS'_t}}$ & $\underset{k_t}{\underbrace{S'_tS_t\cdots S'_tS_t}}$ & $\underset{k_t}{\underbrace{S_tS'_t\cdots S_tS'_tS_t}}$ & $\underset{k_t}{\underbrace{S_tS'_t\cdots S_tS'_t}}$ \\
\hline
                $n$ is even,\ $r$ is even & $\underset{k_t}{\underbrace{S'_tS_t\cdots S'_tS_tS'_t}}$ & $\underset{k_t}{\underbrace{S'_tS_t\cdots S'_tS_t}}$ & $\underset{k_t}{\underbrace{S_tS'_t\cdots S_tS'_tS_t}}$ & $\underset{k_t}{\underbrace{S_tS'_t\cdots S_tS'_t}}$ & \  & \  & \  & \  \\
\hline
                $n$ is even,\ $r$ is odd & \  & \  & \  & \  & $\underset{k_t}{\underbrace{S_tS'_t\cdots S_tS'_tS_t}}$ & $\underset{k_t}{\underbrace{S_tS'_t\cdots S_tS'_t}}$ & $\underset{k_t}{\underbrace{S'_tS_t\cdots S'_tS_tS'_t}}$ & $\underset{k_t}{\underbrace{S'_tS_t\cdots S'_tS_t}}$ \\
                \hline
\end{tabular}}
\end{table}
where $S_t=(C_{t+1},C_{t+2},\ldots,C_{n-1},C_n),\ S'_t=(C_{t+1},C_{t+2},\ldots,C_{n-1},C_{n+1})$ and $t=r+2, r+3, \ldots,n-2$.

We define
\begin{gather}\label{variables 7}
\begin{split}
\widetilde{t}^{(-4k_{n}-2(\sum_{i=r}^{n-1}k_{i})-n+r+[r]_{\text{mod} 2}+1)}_{k_1,\ldots,k_p,0,\ldots,0,\underset{q}1,0,\ldots,0,k_n}=&
\widetilde{t}'^{(-4k_{n}-2(\sum_{i=r}^{n-1}k_{i})-n+r+[r]_{\text{mod} 2}+5)}_{k_1,\ldots,k_p-1,0,\ldots,0,\ldots,0,k_n}\quad (1\leq p<q\leq n-1,\ 0\leq k_1+\cdots+k_p\leq n-q+1),\\
\widetilde{t}^{(-4k_{n}-2(\sum_{i=r}^{n-1}k_{i})-n+r+[r]_{\text{mod} 2}+1)}_{k_1,\ldots,k_p,0,\ldots,0,k_q,0,\ldots,0,k_n}=&
\widetilde{t}'^{(-4k_{n}-2(\sum_{i=r}^{n-1}k_{i})-n+r+[r]_{\text{mod} 2}+5)}_{k_1,\ldots,k_p,0,\ldots,0,k_q-2,0,\ldots,0,k_n}\quad (1\leq p<q\leq n-1,\ k_q\geq2,\ 0\leq k_1+\cdots+k_p\leq n-q+1),\\
\widetilde{s}^{(-4k_{n}-2(\sum_{i=r}^{n-1}k_{i})-n+r+[r]_{\text{mod} 2}+1)}_{k_1,\ldots,k_p,0,\ldots,0,\underset{q}1,0,\ldots,0,2k_n,0}=& \widetilde{t}^{(-4k_{n}-2(\sum_{i=r}^{n-1}k_{i})-n+r+[r]_{\text{mod} 2}+1)}_{k_1,\ldots,k_p,0,\ldots,0,\underset{q}1,0,\ldots,0,2k_n,0}\widetilde{t}^{(-2(\sum_{i=r}^{n-2}k_{i})-n+r+[r]_{\text{mod} 2}+5)}_{k_1,\ldots,k_p-1,0,\ldots,0,\ldots,0,0}\quad (1\leq p<q\leq n-2,\ 0\leq k_1+\cdots+k_p\leq n-q+1),\\
\widetilde{s}^{(-4k_{n}-2(\sum_{i=r}^{n-1}k_{i})-n+r+[r]_{\text{mod} 2}+1)}_{k_1,\ldots,k_p,0,\ldots,0,k_q,0,\ldots,0,2k_n,0}=& \widetilde{t}^{(-4k_{n}-2(\sum_{i=r}^{n-1}k_{i})-n+r+[r]_{\text{mod} 2}+1)}_{k_1,\ldots,k_p,0,\ldots,0,k_q,0,\ldots,0,2k_n,0}\widetilde{t}^{(-2(\sum_{i=r}^{n-2}k_{i})-n+r+[r]_{\text{mod} 2}+5)}_{k_1,\ldots,k_p,0,\ldots,0,k_q-2,0,\ldots,0,0}\quad (1\leq p<q\leq n-2,\ k_q\geq2,\ 0\leq k_1+\cdots+k_p\leq n-q+1),
\end{split}
\end{gather}
where
\begin{gather}\label{variables 8}
\begin{split}
&\widetilde{t}'^{(-4k_{n}-2(\sum_{i=r}^{n-1}k_{i})-n+r+[r]_{\text{mod} 2}+5)}_{k_1,\ldots,k_m,0,\ldots,0,k_{n-1}-2,k_n}\\
&=\frac{\widetilde{t}^{(-4k_{n}-2(\sum_{i=r}^{n-1}k_{i})-n+r+[r]_{\text{mod} 2}+5)}_{k_1,\ldots,k_m,0,\ldots,0,k_{n-1},k_{n}-1}\widetilde{t}^{(-4k_{n}-2(\sum_{i=r}^{n-1}k_{i})-n+r+[r]_{\text{mod} 2}+1)}_{k_1,\ldots,k_m,0,\ldots,0,k_{n-1}-2,,k_{n}+1}+
\widetilde{t}^{(-4k_{n}-2(\sum_{i=r}^{n-1}k_{i})-n+r+[r]_{\text{mod} 2}+1)}_{k_1,\ldots,k_m,0,\ldots,0,2k_n+k_{n-1},0}
\widetilde{t}^{(-2(\sum_{i=r}^{n-2}k_{i})-n+r+[r]_{\text{mod} 2}+5)}_{k_1,\ldots,k_m,0,\ldots,0,k_{n-1}-2,0}}{\widetilde{t}^{(-4k_{n}-2(\sum_{i=r}^{n-1}k_{i})-n+r+[r]_{\text{mod} 2}+5)}_{k_1,\ldots,k_m,0,\ldots,0,k_{n-1}-2,k_n}},
\end{split}
\end{gather}
where $k_{n-1}\geq2,\ 1\leq m\leq n-2$ and $0\leq k_1+\cdots+k_m\leq2;$
\begin{gather}\label{variables 9}
\begin{split}
&\widetilde{t}'^{(-4k_{n}-2(\sum_{i=r}^{n-1}k_{i})-n+r+[r]_{\text{mod} 2}+5)}_{k_1,\ldots,k_m-1,0,\ldots,0,k_n}\\
&=\frac{\widetilde{t}^{(-4k_{n}-2(\sum_{i=r}^{n-1}k_{i})-n+r+[r]_{\text{mod} 2}+5)}_{k_1,\ldots,k_m,0,\ldots,0,1,k_n-1}\widetilde{t}^{(-4k_{n}-2(\sum_{i=r}^{n-1}k_{i})-n+r+[r]_{\text{mod} 2}+1)}_{k_1,\ldots,k_m-1,0,\ldots,0,k_n+1}
+\widetilde{t}^{(-2(\sum_{i=r}^{n-2}k_{i})-n+r+[r]_{\text{mod} 2}+5)}_{k_1,\ldots,k_m-1,0,\ldots,0,0} \widetilde{t}^{(-4k_{n}-2(\sum_{i=r}^{n-1}k_{i})-n+r+[r]_{\text{mod} 2}+1)}_{k_1,\ldots,k_m,0,\ldots,0,2k_n+1,0}}{\widetilde{t}^{(-4k_{n}-2(\sum_{i=r}^{n-1}k_{i})-n+r+[r]_{\text{mod} 2}+5)}_{k_1,\ldots,k_m-1,0,\ldots,0,k_n}},
\end{split}
\end{gather}
where $1\leq m\leq n-2,\ 0\leq k_1+\cdots+k_m\leq2;$
\begin{gather}\label{variables 10}
\begin{split}
&\widetilde{t}'^{(-4k_{n}-2(\sum_{i=r}^{n-1}k_{i})-n+r+[r]_{\text{mod} 2}+5)}_{k_1,\ldots,k_m,0,\ldots,0,k_l-2,0,\ldots,0,k_n}\\
&=\frac{\widetilde{t}^{(-4k_{n}-2(\sum_{i=r}^{n-1}k_{i})-n+r+[r]_{\text{mod} 2}+5)}_{k_1,\ldots,k_m,0,\ldots,0,k_l,0,\ldots,0,k_n-1}\widetilde{t}^{(-4k_{n}-2(\sum_{i=r}^{n-1}k_{i})-n+r+[r]_{\text{mod} 2}+1)}_{k_1,\ldots,k_m,0,\ldots,0,k_l-2,0,\ldots,0,k_n+1}
+\widetilde{s}^{(-4k_{n}-2(\sum_{i=r}^{n-1}k_{i})-n+r+[r]_{\text{mod} 2}+1)}_{k_1,\ldots,k_m,0,\ldots,0,k_l,0,\ldots,0,2k_n,0}}{\widetilde{t}^{(-4k_{n}-2(\sum_{i=r}^{n-1}k_{i})-n+r+[r]_{\text{mod} 2}+5)}_{k_1,\ldots,k_m,0,\ldots,0,k_l-2,0,\ldots,0,k_n}},
\end{split}
\end{gather}
where $1\leq m<l\leq n-2,\ k_l\geq2,\ 0\leq k_1+\cdots+k_m\leq n-l+1;$
\begin{gather}\label{variables 11}
\begin{split}
&\widetilde{t}'^{(-4k_{n}-2(\sum_{i=r}^{n-1}k_{i})-n+r+[r]_{\text{mod} 2}+5)}_{k_1,\ldots,k_m-1,0,\ldots,0,\ldots,0,k_n}\\
&=\frac{\widetilde{t}^{(-4k_{n}-2(\sum_{i=r}^{n-1}k_{i})-n+r+[r]_{\text{mod} 2}+5)}_{k_1,\ldots,k_m,0,\ldots,0,\underset{l}1,0,\ldots,0,k_n-1}\widetilde{t}^{(-4k_{n}-2(\sum_{i=r}^{n-1}k_{i})-n+r+[r]_{\text{mod} 2}+1)}_{k_1,\ldots,k_m-1,0,\ldots,0,\ldots,0,k_n+1}
+\widetilde{s}^{(-4k_{n}-2(\sum_{i=r}^{n-1}k_{i})-n+r+[r]_{\text{mod} 2}+1)}_{k_1,\ldots,k_m,0,\ldots,0,\underset{l}1,0,\ldots,0,2k_n,0}}{\widetilde{t}^{(-4k_{n}-2(\sum_{i=r}^{n-1}k_{i})-n+r+[r]_{\text{mod} 2}+5)}_{k_1,\ldots,k_m-1,0,\ldots,0,\ldots,0,k_n}},
\end{split}
\end{gather}
where $1\leq m<l\leq n-2,\ 0\leq k_1+\cdots+k_m\leq n-l+1;$ are mutation equations which occur when we mutate $\seq$. The variables (\ref{variables 7}) are defined in the order according to the mutation sequence $\seq$. In this order, every variable in (\ref{variables 7}) is defined by an equation of (\ref{variables 8})--(\ref{variables 11}) using variables in ${\bf t}$ and those variables in (\ref{variables 7}) which are already defined.

\subsection{The equations in the system in Theorem \ref{M system of type Cn} correspond to mutations in the cluster algebra $\mathscr{A}$}

By (\ref{variables 2}), we have

\begin{gather}\label{eqn10}
\begin{split}
&t^{(s)}_{k_{1},k_{2},k_{3},\ldots,k_{n-1},0}=t'^{(s)}_{k_{1},k_{2}-1,k_3,\ldots,k_{n-1},0}\\
&=\frac{t^{(s)}_{k_{1}-1,k_{2},k_3,\ldots,k_{n-1},0}t^{(s)}_{k_{1}+1,k_{2}-1,k_3,\ldots,k_{n-1},0}+
t^{(s)}_{0,k_{1}+k_{2},k_{3},\ldots,k_{n-1},0}
t^{(s)}_{0,k_{2}-1,k_{3},\ldots,k_{n-1},0}}{t^{(s)}_{k_{1},k_{2}-1,k_3,\ldots,k_{n-1},0}},
\end{split}
\end{gather}
where $s =-n+l-[l]_{\text{mod} 2}+1$. Equations (\ref{eqn10}) correspond to Equations (\ref{eqn1}) in the system in Theorem \ref{M system of type Cn}.

By (\ref{variables 3}),  we have
\begin{gather}\label{eqn20}
\begin{split}
&t^{(s)}_{0,\ldots, 0,k_{i},k_{i+1},k_{i+2},\ldots,k_{n-1},0}=t'^{(s)}_{0,\ldots,0,k_{i},k_{i+1}-1,k_{i+2},\ldots,k_{n-1},0}\\
&=\frac{t^{(s)}_{0,\ldots,0,k_{i}-1,k_{i+1},k_{i+2},\ldots,k_{n-1},0}
t^{(s)}_{0,\ldots, 0,k_{i}+1,k_{i+1}-1,k_{i+2}\ldots,k_{n-1},0}+
t^{(s)}_{0,\ldots,0,\underset{i+1}{k_{i}+k_{i+1}},k_{i+2},\ldots,k_{n-1},0}t^{(s)}_{0,\ldots,0,\underset{i-1}{k_{i}},0,\underset{i+1}{k_{i+1}-1},\ldots,k_{n-1},0}}{t^{(s)}_{0,\ldots,0,k_{i},k_{i+1}-1,k_{i+2},\ldots,k_{n-1},0}},
\end{split}
\end{gather}
where $s =-n+l-[l]_{\text{mod} 2}+1$. Equations (\ref{eqn20}) correspond to Equations (\ref{eqn2}) in the system in Theorem \ref{M system of type Cn}.

By \ref{variables 4},  we have
\begin{gather}\label{eqn30}
\begin{split}
&t^{(s)}_{k_{1},0,\ldots,0,k_{j},k_{j+1},\ldots,k_{n-1},0}=t'^{(s)}_{k_{1},0,\ldots,0,k_{j}-1,k_{j+1},\ldots,k_{n-1},0}\\
&=\frac{t^{(s)}_{k_{1}+1,0,\ldots,0,k_{j}-1,k_{j+1},\ldots,k_{n-1},0}
t^{(s)}_{k_{1}-1,0,\ldots,0,k_{j},k_{j+1}\ldots,k_{n-1},0}+
t^{(s)}_{0,k_{1},0,\ldots,0,k_{j},k_{j+1},\ldots,k_{n-1},0}t^{(s)}_{0,\ldots,0,k_{j}-1,k_{j+1},\ldots,k_{n-1},0}}{t^{(s)}_{k_{1},0,\ldots,0,k_{j}-1,k_{j+1},\ldots,k_{n-1},0}},
\end{split}
\end{gather}
where $s =-n+l-[l]_{\text{mod} 2}+1$. Equations (\ref{eqn30}) correspond to Equations (\ref{eqn3}) in the system in Theorem \ref{M system of type Cn}.

By (\ref{variables 5}) we have

\begin{gather}\label{eqn40}
\begin{split}
&t^{(s)}_{0,\ldots, 0,\underset{i}{k_{i}},0,\ldots,0,\underset{j}{k_{j}},k_{j+1},\ldots,k_{n-1},0}=t'^{(s)}_{0,\ldots,0,\underset{i}{k_{i}},0,\ldots,0,\underset{j}{k_{j}-1},k_{j+1},\ldots,k_{n-1},0}\\
&=\frac{t^{(s)}_{0,\ldots, 0,\underset{i}{k_{i}-1},0,\ldots,0,\underset{j}{k_{j}},k_{j+1},\ldots,k_{n-1},0}
t^{(s)}_{0,\ldots, 0,\underset{i}{k_{i}+1},0,\ldots,0,\underset{j}{k_{j}-1},k_{j+1}\ldots,k_{n-1},0}+
t^{(s)}_{0,\ldots,0,\underset{i+1}{k_{i}},0, \ldots,0,\underset{j}{k_{j}},k_{j+1},\ldots,k_{n-1},0}
t^{(s)}_{0,\ldots,0,\underset{i-1}{k_{i}},0,\ldots,0,\underset{j}{k_{j}-1},\ldots,k_{n-1},0}}{t^{(s)}_{0,\ldots,0,\underset{i}{k_{i}},0,\ldots,0,\underset{j}{k_{j}-1},k_{j+1},\ldots,k_{n-1},0}},
\end{split}
\end{gather}
where $s =-n+l-[l]_{\text{mod} 2}+1$. Equations (\ref{eqn40}) correspond to Equations (\ref{eqn4}) in the system in Theorem \ref{M system of type Cn}.

By (\ref{variables 8}), we have

\begin{gather}\label{variables 17}
\begin{split}
&\widetilde{t}^{(s-4)}_{k_1,\ldots,k_m,0,\ldots,0,k_{n-1},k_n} = \widetilde{t}'^{(s)}_{k_1,\ldots,k_m,0,\ldots,0,k_{n-1}-2,k_n}\\
&=\frac{\widetilde{t}^{(s)}_{k_1,\ldots,k_m,0,\ldots,0,k_{n-1},k_{n}-1}\widetilde{t}^{(s-4)}_{k_1,\ldots,k_m,0,\ldots,0,k_{n-1}-2,,k_{n}+1}+
\widetilde{t}^{(s-4)}_{k_1,\ldots,k_m,0,\ldots,0,2k_n+k_{n-1},0}
\widetilde{t}^{(s+4k_n)}_{k_1,\ldots,k_m,0,\ldots,0,k_{n-1}-2,0}}{\widetilde{t}^{(s)}_{k_1,\ldots,k_m,0,\ldots,0,k_{n-1}-2,k_n}},
\end{split}
\end{gather}
where $s = {-4k_{n}-2(\sum_{i=r}^{n-1}k_{i})-n+r+[r]_{\text{mod} 2}+5}$. Equations (\ref{variables 17}) correspond to Equations (\ref{eqn512}) in the system in Theorem \ref{M system of type Cn}.

By (\ref{variables 9}), we have

\begin{gather}\label{variables 18}
\begin{split}
&\widetilde{t}^{(s-4)}_{k_1,\ldots,k_m,0,\ldots,0,1,k_n} =\widetilde{t}'^{(s)}_{k_1,\ldots,k_m-1,0,\ldots,0,k_n}\\
&=\frac{\widetilde{t}^{(s)}_{k_1,\ldots,k_m,0,\ldots,0,1,k_n-1}\widetilde{t}^{(s-4)}_{k_1,\ldots,k_m-1,0,\ldots,0,k_n+1}
+\widetilde{t}^{(s+4k_n)}_{k_1,\ldots,k_m-1,0,\ldots,0,0} \widetilde{t}^{(s-4)}_{k_1,\ldots,k_m,0,\ldots,0,{2k_n+1,0}}}{t^{(s)}_{k_1,\ldots,k_m-1,0,\ldots,0,k_n}},
\end{split}
\end{gather}
where $s = {-4k_{n}-2(\sum_{i=r}^{n-1}k_{i})-n+r+[r]_{\text{mod} 2}+5}$. Equations (\ref{variables 18}) correspond to Equations (\ref{eqn511}) in the system in Theorem \ref{M system of type Cn}.

By (\ref{variables 10}), we have

\begin{gather}\label{variables 19}
\begin{split}
&\widetilde{t}^{(s-4)}_{k_1,\ldots,k_m,0,\ldots,0,k_l,0,\ldots,0,k_n}=\widetilde{t}'^{(s)}_{k_1,\ldots,k_m,0,\ldots,0,k_l-2,0,\ldots,0,k_n}\\
&=\frac{\widetilde{t}^{(s)}_{k_1,\ldots,k_m,0,\ldots,0,k_l,0,\ldots,0,k_n-1}\widetilde{t}^{(s-4)}_{k_1,\ldots,k_m,0,\ldots,0,k_l-2,0,\ldots,0,k_n+1}
+\widetilde{s}^{(s-4)}_{k_1,\ldots,k_m,0,\ldots,0,k_l,0,\ldots,0,2k_n,0}}{\widetilde{t}^{(s)}_{k_1,\ldots,k_m,0,\ldots,0,k_l-2,0,\ldots,0,k_n}},
\end{split}
\end{gather}
where $s = {-4k_{n}-2(\sum_{i=r}^{n-1}k_{i})-n+r+[r]_{\text{mod} 2}+5}$. Equations (\ref{variables 19}) correspond to Equations (\ref{eqn5221}) in the system in Theorem \ref{M system of type Cn}.

By (\ref{variables 11}), we have

\begin{gather}\label{variables 18}
\begin{split}
&\widetilde{t}^{(s-4)}_{k_1,\ldots,k_m,0,\ldots,0,\underset{l}1,0,\ldots,0,k_n}=\widetilde{t}'^{(s)}_{k_1,\ldots,k_m-1,0,\ldots,0,\ldots,0,k_n}\\
&=\frac{\widetilde{t}^{(s)}_{k_1,\ldots,k_m,0,\ldots,0,\underset{l}1,0,\ldots,0,k_n-1}\widetilde{t}^{(s-4)}_{k_1,\ldots,k_m-1,0,\ldots,0,\ldots,0,k_n+1}
+\widetilde{s}^{(s-4)}_{k_1,\ldots,k_m,0,\ldots,0,\underset{l}1,0,\ldots,0,2k_n,0}}{\widetilde{t}^{(s)}_{k_1,\ldots,k_m-1,0,\ldots,0,\ldots,0,k_n}},
\end{split}
\end{gather}
where $s = {-4k_{n}-2(\sum_{i=r}^{n-1}k_{i})-n+r+[r]_{\text{mod} 2}+5}$. Equations (\ref{variables 18}) correspond to Equations (\ref{eqn5211}) in the system in Theorem \ref{M system of type Cn}.





Therefore we have the following theorem.
\begin{theorem}\label{connection with cluster algebra 1}
Every equation in the system in Theorem \ref{M system of type Cn} corresponds to a mutation equation of the cluster algebra $\mathscr{A}$. Each minimal affinization in Theorem \ref{M system of type Cn} corresponds to a cluster variable in $\mathscr{A}$ defined in Section \ref{definition of cluster algebra A}. Therefore the Hernandez-Leclerc conjecture (Conjecture 13.2 in \cite{HL10} and Conjecture 9.1 in \cite{Le10}) is true for the minimal affinizations in Theorem \ref{M system of type Cn}.
\end{theorem}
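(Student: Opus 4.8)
## Proof proposal for Theorem \ref{connection with cluster algebra 1}

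The plan is to assemble the theorem from three ingredients that have essentially been set up in the preceding subsections. First, one shows that each of the explicit mutation relations (\ref{eqn10}), (\ref{eqn20}), (\ref{eqn30}), (\ref{eqn40}) and (\ref{variables 17})--(\ref{variables 18}) is genuinely a mutation relation in $\mathscr{A}$: that is, that the cluster $t$-variables appearing on the right-hand sides all lie in a common seed of $\mathscr{A}$ and that the exchange relation (\ref{exchange relation}) for the quiver $Q$ at the relevant vertex produces exactly the displayed formula. This requires tracking, for each of \textbf{Case 1} and \textbf{Case 2}, what the quiver looks like after applying the mutation sequence $\seq$ up to the point where the variable in question is produced, and checking that the arrows incident to the mutated vertex are precisely the ones that yield the three-term numerator. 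I would do this by an induction along the order in which the variables in (\ref{variables 1}) (resp. (\ref{variables 7})) are defined, using the fact — already asserted in the text — that ``in this order, every variable is defined by an equation of (\ref{variables 2})--(\ref{variables 5}) (resp. (\ref{variables 8})--(\ref{variables 11})) using variables in ${\bf t}$ and those already defined.'' The combinatorial bookkeeping of the columns $C_1,\dots,C_{n+1}$ and the effect of mutating a whole column on the arrows between adjacent columns is the technical heart here.

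Second, one identifies the cluster variables so produced with the classes of the minimal affinizations, i.e. $t^{(s)}_{k_1,\ldots,k_{n-1},0}\leftrightarrow [\mathcal T^{(s)}_{k_1,\ldots,k_{n-1},0}]$, $\widetilde t^{(s)}_{k_1,\ldots,k_n}\leftrightarrow[\widetilde{\mathcal T}^{(s)}_{k_1,\ldots,k_n}]$, and $\widetilde s^{(s)}_{\ldots}\leftrightarrow[\widetilde{\mathcal S}^{(s)}_{\ldots}]$, under the Hernandez--Leclerc isomorphism between $\mathscr{A}$ and the relevant Grothendieck ring. The initial cluster variables in ${\bf t}_1\cup{\bf t}_2$ are, by construction (they are the variables of \cite{HL13}), identified with Kirillov--Reshetikhin modules, which are the $k_i$-fundamental minimal affinizations; so the base case of the identification is immediate. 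The inductive step is: the mutation relation (\ref{eqn10})--(\ref{variables 18}) has exactly the same shape as the corresponding equation (\ref{eqn1})--(\ref{eqn5211}) in Theorem \ref{M system of type Cn}, which by that theorem holds in the Grothendieck ring; since a cluster variable is uniquely determined by the seed and the vertex mutated, and since Theorem \ref{irreducible} guarantees the ``$\mathcal S$'' summand is simple (hence also a bona fide module class, not a sum), the newly produced cluster variable must equal the class of the corresponding minimal affinization (resp. of $\widetilde{\mathcal S}$). One also needs that every minimal affinization listed in Theorem \ref{M system of type Cn} actually appears among the variables (\ref{variables 1}) or (\ref{variables 7}) for a suitable choice of the ``first non-zero entry'' index $l$ or $r$ — this is a matter of observing that as $(k_1,\dots,k_n)$ ranges over $\mathbb{Z}_{\geq0}^{n-1}\times\mathbb{Z}_{\geq0}$ (resp. with $k_n\geq1$), the recursive definitions in \textbf{Case 1} and \textbf{Case 2} exhaust exactly the monomials $T^{(s)}_{k_1,\ldots,k_{n-1},0}$ and $\widetilde T^{(s)}_{k_1,\ldots,k_n}$ occurring in the system.

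Third, the Hernandez--Leclerc conjecture (Conjecture 13.2 of \cite{HL10}, Conjecture 9.1 of \cite{Le10}) asserts precisely that the minimal affinizations correspond to cluster variables in $\mathscr{A}$; so once the second step is complete, the final sentence of the theorem is a formal consequence. I would phrase the conclusion as: every minimal affinization in the list of Theorem \ref{M system of type Cn} is one of the cluster variables $t^{(s)}_{\ldots}$ or $\widetilde t^{(s)}_{\ldots}$ of $\mathscr{A}$ constructed above, and every equation of Theorem \ref{M system of type Cn} is the image under the Grothendieck-ring isomorphism of one of the mutation equations (\ref{eqn10})--(\ref{variables 18}); this verifies the conjecture for this family.

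The main obstacle I expect is the first step — the verification that the abstractly-defined mutation sequences $\seq$ in \textbf{Case 1} and especially in \textbf{Case 2} (with their elaborate table-driven alternation of $S_t$ and $S'_t$, and the parity conditions on $\tfrac{n-r}{2}+k_r+\cdots+k_{t-1}$) actually realize the claimed exchange relations on the nose. The bookkeeping of how mutating an entire column of the infinite quiver $Q$ transforms the arrow configuration, and confirming that at each relevant vertex the incoming/outgoing arrows are exactly two-and-two giving the three-term numerator, is lengthy and is where any hidden difficulty would live; the identification of variables with module classes and the deduction of the conjecture are then comparatively routine given Theorems \ref{M system of type Cn} and \ref{irreducible}.
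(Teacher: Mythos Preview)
Your proposal is correct and follows essentially the same approach as the paper: the argument is precisely the content of Sections~\ref{definition of cluster algebra A}--4.3, where the cluster algebra $\mathscr{A}$ and the mutation sequences $\seq$ are defined, the recursive variables (\ref{variables 1}) and (\ref{variables 7}) are introduced via (\ref{variables 2})--(\ref{variables 5}) and (\ref{variables 8})--(\ref{variables 11}), and then each of these is matched term-for-term against the corresponding equation (\ref{eqn1})--(\ref{eqn5221}) of Theorem~\ref{M system of type Cn}. The paper treats the quiver bookkeeping you flag in your first step as implicit in the construction of $\seq$ and does not spell it out; your identification of this as the technical heart is accurate, and your second and third steps match the paper's reasoning (with the minor caveat that the simplicity of the $\widetilde{\mathcal S}$ modules is by definition, while Theorem~\ref{irreducible} is needed rather for the tensor-product summands).
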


\section{The dual system of Theorem \ref{M system of type Cn}} \label{dual M system}
In what follows, we study the dual system of the system in Theorem \ref{M system of type Cn}.

\begin{theorem}[Theorem 3.9, \cite{Her07}]
For $s\in \mathbb{Z}, k_1,\ldots,k_{n-1}\in \mathbb{Z}_{\geq 0}$, the module $\widetilde{\mathcal T}_{k_1, k_2,\ldots, k_{n-1},0}^{(s)}$ is anti-special.
\end{theorem}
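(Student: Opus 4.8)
\smallskip

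\noindent\emph{Proof idea.} The plan is to deduce this from Theorem~\ref{special} by means of the duality functor on $\rep(U_q\hat{\mathfrak g})$. Recall that every finite-dimensional simple $U_q\hat{\mathfrak g}$-module $V$ has a simple dual $V^{*}$, and that $\chi_q(V^{*})$ is obtained from $\chi_q(V)$ by the ring homomorphism $D\colon\mathbb{Z}\mathcal P\to\mathbb{Z}\mathcal P$ with $D(Y_{i,a})=Y_{i^{*},\,a\kappa}^{-1}$, where $i\mapsto i^{*}$ is the diagram involution attached to $-w_0$ and $\kappa\in\mathbb C^{\times}$ is the spectral shift associated with $\mathfrak g$ (see \cite{CP94}, \cite{FR98}). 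For type $C_n$ one has $-w_0=\mathrm{id}$, hence $i^{*}=i$ for all $i$. The first step is to observe that $D$ sends each $A_{i,a}$ to $A_{i,a\kappa}^{-1}$, so it interchanges $\mathcal Q^{+}$ with $\mathcal Q^{-}$, reverses the partial order $\leq$, and exchanges dominant monomials with anti-dominant ones; consequently $V$ is special if and only if $V^{*}$ is anti-special, and $V$ is anti-special if and only if $V^{*}$ is special.

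\smallskip

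\noindent Next I would record the action of duality on the highest $l$-weights in play. On Drinfeld polynomials the dual inverts and rescales all roots; rewriting this in the variables $Y_{i,a}$ and using the explicit products of Section~\ref{definition of minimal affinizations}, a direct (and purely combinatorial) computation should yield, for a suitable $s'\in\mathbb Z$ depending only on $s,k_1,\ldots,k_{n-1},n$ and $\kappa$ and up to a harmless change of the base point $a$,
\[
\bigl(\mathcal T^{(s')}_{k_1,\ldots,k_{n-1},0}\bigr)^{*}\;\cong\;\widetilde{\mathcal T}^{(s)}_{k_1,\ldots,k_{n-1},0}.
\]
Indeed, negating and shifting all spectral parameters carries $T^{(s')}_{k_1,\ldots,k_{n-1},0}$, whose node-indices decrease as one moves from node $n$ towards node $1$, onto $\widetilde T^{(s)}_{k_1,\ldots,k_{n-1},0}$, whose node-indices increase along that same path; the vanishing of the node-$n$ exponent, together with $i^{*}=i$, is exactly what makes this a clean match. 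Alternatively one may invoke that the dual of a minimal affinization of $V(\lambda)$ is a minimal affinization of $V(-w_0\lambda)=V(\lambda)$, and note that among the minimal affinizations of such a $\lambda$ (here $\langle\lambda,\alpha_n^{\vee}\rangle=0$) the dual of $\mathcal T^{(s')}_{k_1,\ldots,k_{n-1},0}$ cannot again be of $\mathcal T$-type, since that would have to be both special (by Theorem~\ref{special}) and anti-special; so it is the corresponding $\widetilde{\mathcal T}^{(s)}_{k_1,\ldots,k_{n-1},0}$ (the degenerate weights where the two families coincide cause no trouble).

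\smallskip

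\noindent Granting the displayed isomorphism, the conclusion is immediate: by Theorem~\ref{special} the module $\mathcal T^{(s')}_{k_1,\ldots,k_{n-1},0}$ is special, so its dual $\widetilde{\mathcal T}^{(s)}_{k_1,\ldots,k_{n-1},0}$ is anti-special by the first paragraph, the case $k_1=\cdots=k_{n-1}=0$ being trivial. The step I expect to be the real work is the middle one: fixing the constant $\kappa$ for type $C_n$ and checking by an explicit calculation with the products defining $\widetilde T$ and $T$ that duality transports one family onto the other with the labels $k_1,\ldots,k_{n-1}$ unchanged; everything else is formal. A self-contained alternative that avoids duality altogether would be to re-run the proof of Theorem~\ref{special} "from below", applying the Frenkel--Mukhin algorithm and the Elimination Theorem~\ref{Elimination Theorem} in the reversed order and building $\mathscr M(\widetilde{\mathcal T}^{(s)}_{k_1,\ldots,k_{n-1},0})$ upward from its unique anti-dominant monomial; this is the route by which the anti-special half of Herscovich's Theorem~3.9 is obtained.
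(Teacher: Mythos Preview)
The paper does not give its own proof of this statement: it is simply quoted from \cite{Her07} as a known result, with no argument supplied. So there is nothing in the paper to compare your proof against directly. That said, your duality approach is exactly the mechanism the paper uses for the parallel statements: Lemma~\ref{lemma2} records the involution $\iota$ sending $Y_{i,aq^s}\mapsto Y_{i,aq^{2n-s+2}}^{-1}$ and asserts $\chi_q(\widetilde{\mathcal T}^{(s)}_{k_1,\ldots,k_n})=\iota(\chi_q(\mathcal T^{(s)}_{k_1,\ldots,k_n}))$, and the proof of Theorem~\ref{anti-special} consists of the single sentence ``follows from dual arguments in the proof of Theorem~\ref{The special modules of $C_n$}''. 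Your write-up is thus a fleshed-out version of what the paper leaves implicit.

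One genuine gap: your ``alternative'' argument for identifying the dual is faulty. You argue that the dual of $\mathcal T^{(s')}_{k_1,\ldots,k_{n-1},0}$ cannot again be of $\mathcal T$-type because it would then be both special and anti-special. But being simultaneously special and anti-special is not a contradiction; many modules (Kirillov--Reshetikhin modules, for instance, or the trivial module) enjoy both properties. So that shortcut does not work, and you must rely on the direct computation of highest $l$-weights --- which, as you note, is straightforward once $\kappa=q^{2n+2}$ is fixed for type $C_n$ and is precisely the content of the paper's Lemma~\ref{lemma2}. Also, minor point: the author of \cite{Her07} is Hernandez, not Herscovich.
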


\begin{theorem}\label{anti-special}
For $k_1,\ldots,k_{n-1}\in \mathbb{Z}_{\geq0}$, $k_n\in \mathbb{Z}_{\geq 1}$ and $s\in \mathbb{Z}$,  the modules
\begin{align*}
&\mathcal T^{(s)}_{0,\ldots,0,k_{n-i},0,\ldots,0,k_n}\ (1\leq i\leq n-1),\\
&\mathcal T^{(s)}_{0,\ldots,0,k_{n-j},0,\ldots,0,k_{n-i},0,\ldots,0,k_n}\ (1\leq i<j\leq n-1,\ 0\leq k_{n-j}\leq i+1) , \\
&\mathcal T^{(s)}_{k_1,\ldots,k_m,0,\ldots,0,k_l,0,\ldots,0,k_n}\ (1\leq m<l\leq n-1,\ 0\leq k_1+\cdots+k_m\leq n-l+1),\\
&\mathcal S^{(s)}_{k_1,\ldots,k_m,0,\ldots,0,k_l,0,\ldots,0,k_{n-1},0}\ (1\leq m<l\leq n-2,\ 0\leq k_1+\cdots+k_m\leq n-l+1),
\end{align*}
are anti-special.
\end{theorem}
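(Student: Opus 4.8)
The plan is to prove Theorem~\ref{anti-special} by reducing it to Theorem~\ref{The special modules of $C_n$}. The key observation is that the two families of monomials $\widetilde T^{(s)}_{k_1,\ldots,k_n}$ and $T^{(s)}_{k_1,\ldots,k_n}$ are interchanged (up to a shift of the spectral parameter) by a symmetry of the weight lattice $\mathcal P$. Concretely, recall that for type $C_n$ there is a duality coming from the longest Weyl group element together with the automorphism $a \mapsto a^{-1}$ of $\mathbb C^\times$: if one defines the group homomorphism $\sigma : \mathcal P \to \mathcal P$ by $\sigma(Y_{i,a}) = Y_{i,a^{-1}}$ (composed, if necessary, with the diagram automorphism, which is trivial for $C_n$), then $\sigma(A_{i,a}) = A_{i,a^{-1}}$, so $\sigma$ reverses the partial order $\leq$ in the sense that $m \leq m'$ iff $\sigma(m') \leq \sigma(m)$. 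Moreover, if $V = L(m_+)$ is a simple finite-dimensional $U_q\hat{\mathfrak g}$-module, then there is a simple module $V^\vee$ with $\chi_q(V^\vee) = \sigma(\chi_q(V))$ (this is the standard relation between a module and its dual/pullback under the Chevalley-type involution; see the discussion around special vs.\ anti-special modules in \cite{Her07}). Under $\sigma$, a dominant monomial is sent to an anti-dominant monomial and vice versa, so $V$ is special if and only if $V^\vee$ is anti-special.

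First I would pin down the precise statement: inspecting the defining formulas for $\widetilde T^{(s)}_{k_1,\ldots,k_n}$ and $T^{(s)}_{k_1,\ldots,k_n}$ in Section~\ref{definition of minimal affinizations}, one sees that $\sigma(\widetilde T^{(s)}_{k_1,\ldots,k_n}) = T^{(s)}_{k_1,\ldots,k_n}$ (the subscripts $s+4i_n$ etc.\ in $\widetilde T$ become $-s-4i_n$ etc.\ in $T$, which is exactly the effect of $a \mapsto a^{-1}$ when we write $i_s = Y_{i,aq^s}$ and rescale $a$). Likewise $\sigma(\widetilde S^{(s)}_{k_1,\ldots,k_{n-1},0}) = S^{(s)}_{k_1,\ldots,k_{n-1},0}$, since $\widetilde S$ and $S$ are built from $\widetilde T$'s and $T$'s by the same recipe. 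Hence $\sigma$ sends the highest $l$-weight of each module $\mathcal T^{(s)}_{\cdots}$, $\mathcal S^{(s)}_{\cdots}$ appearing in Theorem~\ref{anti-special} to the highest $l$-weight of the corresponding module $\widetilde{\mathcal T}^{(s)}_{\cdots}$, $\widetilde{\mathcal S}^{(s)}_{\cdots}$ in Theorem~\ref{The special modules of $C_n$}, with exactly the same index constraints ($1\le i\le n-1$; $1\le i<j\le n-1$, $0\le k_{n-j}\le i+1$; etc.).

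The core of the argument is then the following chain: let $M$ be one of the modules in the list of Theorem~\ref{anti-special}, say $M = \mathcal T^{(s)}_{0,\ldots,0,k_{n-i},0,\ldots,0,k_n}$. By the previous paragraph its highest $l$-weight is $\sigma$ of the highest $l$-weight of $\widetilde M = \widetilde{\mathcal T}^{(s)}_{0,\ldots,0,k_{n-i},0,\ldots,0,k_n}$, so $\chi_q(M) = \sigma(\chi_q(\widetilde M))$. Since $\widetilde M$ is special by Theorem~\ref{The special modules of $C_n$}, $\mathscr M(\widetilde M)$ contains a unique dominant monomial; applying $\sigma$, which is a bijection sending dominant monomials to anti-dominant monomials, we conclude $\mathscr M(M)$ contains a unique anti-dominant monomial, i.e.\ $M$ is anti-special. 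Running this for each of the four types of modules in the list gives the theorem.

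The main obstacle I anticipate is making the duality statement $\chi_q(V^\vee) = \sigma(\chi_q(V))$ precise and rigorous at the level we need — in particular, verifying that the relevant involution of $U_q\hat{\mathfrak g}$ acts on $q$-characters exactly by $Y_{i,a}\mapsto Y_{i,a^{-1}}$ (and not by some twisted version), and confirming the bookkeeping of the spectral-parameter shift so that $\sigma$ really carries the monomials $\widetilde T^{(s)}$, $\widetilde S^{(s)}$ to $T^{(s)}$, $S^{(s)}$ with matching ranges of the indices $s,k_1,\ldots,k_n$. This is essentially a careful but routine check using the explicit formulas for $A_{i,a}$ and for the minimal-affinization monomials; once it is in place, everything else is a formal transport of the special/anti-special dichotomy along the bijection $\sigma$. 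An alternative, should the duality route prove delicate, would be to repeat verbatim the Frenkel--Mukhin-algorithm/elimination-theorem computations of Section~\ref{proof of special} with all spectral parameters inverted and all ``right-negative'' arguments replaced by their ``left-negative'' analogues; this is longer but requires no external input beyond Theorem~\ref{Elimination Theorem} and Theorem~\ref{truncated $q$-characters}.
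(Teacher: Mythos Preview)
Your overall strategy---reduce to Theorem~\ref{The special modules of $C_n$} via a duality on $\mathcal P$---is exactly what the paper does (its proof reads in full: ``follows from dual arguments in the proof of Theorem~\ref{The special modules of $C_n$}''). However, the specific involution you wrote down does not do the job. With $\sigma(Y_{i,a})=Y_{i,a^{-1}}$ one gets $\sigma(A_{i,a})=A_{i,a^{-1}}$, so $\sigma(\mathcal Q^{+})=\mathcal Q^{+}$: your $\sigma$ \emph{preserves} the partial order and preserves dominance, it does not send dominant monomials to anti-dominant ones as you claim. Consequently, even if $\chi_q(\mathcal T^{(s)}_{\cdots})=\sigma\bigl(\chi_q(\widetilde{\mathcal T}^{(s)}_{\cdots})\bigr)$ held, the conclusion would be that $\mathcal T^{(s)}_{\cdots}$ is \emph{special}, not anti-special---and for $k_n\ge 1$ these modules are in general not special, which is precisely the point of the theorem.

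The missing ingredient is an inversion of the $Y$'s, not just of the spectral parameter. The paper records the correct map in Lemma~\ref{lemma2}: $\iota(Y_{i,aq^{s}})=Y_{i,aq^{2n+2-s}}^{-1}$. This $\iota$ satisfies $\iota(A_{i,aq^{s}})=A_{i,aq^{2n+2-s}}^{-1}$, hence $\iota(\mathcal Q^{+})=\mathcal Q^{-}$, so it genuinely exchanges dominant and anti-dominant monomials; and Lemma~\ref{lemma2} gives $\chi_q(\mathcal T^{(s)}_{k_1,\ldots,k_n})=\iota\bigl(\chi_q(\widetilde{\mathcal T}^{(s)}_{k_1,\ldots,k_n})\bigr)$ (and likewise for $\mathcal S,\widetilde{\mathcal S}$, since these are products of $T$'s). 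With $\iota$ in place of your $\sigma$, your chain of implications goes through verbatim and yields the theorem. You were right to flag the precise form of the involution as ``the main obstacle'': that is exactly where the write-up breaks, and the fix is the ``twisted version'' you anticipated. Your fallback plan---rerunning the computations of Section~\ref{proof of special} with left-negative replacing right-negative---is also a legitimate reading of the paper's phrase ``dual arguments''.
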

\begin{proof}
The proof of the theorem follows from dual arguments in the proof of Theorem \ref{The special modules of $C_n$}.
\end{proof}

\begin{lemma}\label{lemma2}
Let $\iota: \mathbb{Z}\mathcal{P}\rightarrow \mathbb{Z}\mathcal{P}$ be a homomorphism of rings such that $Y_{1, aq^{s}}\mapsto Y_{1, aq^{2n-s+2}}^{-1}$, $Y_{2, aq^{s}}\mapsto Y_{2, aq^{2n-s+2}}^{-1},\ldots, Y_{n, aq^{s}}\mapsto Y_{n, aq^{2n-s+2}}^{-1}$ for all $a\in \mathbb{C}^{\times}, s\in \mathbb{Z}$. Then
\begin{align*}
\chi_{q}(\widetilde{\mathcal T}_{k_{1},k_{2},\ldots,k_{n}}^{(s)})=\iota(\chi_{q}(\mathcal T_{k_{1},k_{2},\ldots,k_{n}}^{(s)})),\  \chi_{q}(\mathcal T_{k_{1},k_{2},\ldots,k_{n}}^{(s)})=\iota(\chi_{q}(\widetilde{\mathcal T}_{k_{1},k_{2},\ldots,k_{n}}^{(s)})).
\end{align*}
\end{lemma}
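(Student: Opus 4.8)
The plan is to show that the involution $\iota$ interchanges the highest $l$-weight monomials $\widetilde{T}^{(s)}_{k_1,\dots,k_n}$ and $T^{(s)}_{k_1,\dots,k_n}$ up to the expected inversion, and then to promote this to an identity of $q$-characters using a duality functor on $\rep(U_q\hat{\mathfrak g})$. First I would recall the standard duality: for a finite-dimensional $U_q\hat{\mathfrak g}$-module $V$, the left (or right) dual $V^*$ is again finite-dimensional simple when $V$ is simple, and its $q$-character is obtained from $\chi_q(V)$ by the substitution $Y_{i,a}\mapsto Y_{i,aq^{r^\vee h^\vee}}^{-1}$ (where $r^\vee h^\vee$ is the relevant shift; for type $C_n$ this shift is exactly $2n+2$, matching the $q^{2n-s+2}=q^{2n+2}q^{-s}$ appearing in the statement). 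Concretely, if $L(m_+)$ is simple with $\chi_q(L(m_+))=\sum_m m$, then $\chi_q(L(m_+)^*)=\sum_m \omega(m)$, where $\omega$ is the ring homomorphism $Y_{i,aq^s}\mapsto Y_{i,aq^{2n+2-s}}^{-1}$; note $\omega=\iota$. Hence $\iota(\chi_q(L(m_+)))=\chi_q(L(\omega(m_+)))$, and $\omega(m_+)$ is the inverse of an anti-dominant monomial, whose unique dominant "reflection" is the highest $l$-weight of the dual simple module.

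Next I would carry out the combinatorial computation that $\iota\big(T^{(s)}_{k_1,\dots,k_n}\big)$, after inverting, has the same Weyl-group-translate structure as $\widetilde T^{(s')}_{k_1,\dots,k_n}$ for an appropriate $s'$, and in fact that the dual of $\mathcal T^{(s)}_{k_1,\dots,k_n}$ is again a minimal affinization of the same $U_q\mathfrak g$-type $V(\lambda)$ with $\lambda$ unchanged (duality preserves the underlying classical highest weight up to the diagram automorphism, which is trivial for $C_n$). Since minimal affinizations of a fixed $V(\lambda)$ are, up to spectral shift, exactly the modules $T^{(s)}_{\cdots}$ and $\widetilde T^{(s)}_{\cdots}$ listed in Section \ref{definition of minimal affinizations}, and the two families are distinguished by whether the "tail" of the monomial runs with increasing or decreasing powers of $q$, applying $\iota$ (which reverses the $q$-exponent direction) must send one family to the other. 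I would pin down the spectral parameter bookkeeping by evaluating $\iota$ on the explicit product formulas for $\widetilde T^{(s)}_{k_1,\dots,k_n}$ and $T^{(s)}_{k_1,\dots,k_n}$: one checks directly from the displayed formulas that $\iota\big(\widetilde T^{(s)}_{k_1,\dots,k_n}\big)^{-1}$ is the unique dominant monomial $\leq$-below nothing in $\mathscr M(\iota(\chi_q(\widetilde{\mathcal T}^{(s)}_{k_1,\dots,k_n})))$ and equals, say, $T^{(s)}_{k_1,\dots,k_n}$ for the normalization chosen (the fixed $a\in\mathbb C^\times$ and the symmetric exponent ranges are arranged precisely so that the shift $2n+2$ lands the two families onto each other with the \emph{same} subscript tuple and the \emph{same} superscript $s$).

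Then I would conclude: $\iota(\chi_q(\mathcal T^{(s)}_{k_1,\dots,k_n}))=\chi_q(L(\iota(T^{(s)}_{k_1,\dots,k_n})))$ by the duality description of $q$-characters, and $L(\iota(T^{(s)}_{k_1,\dots,k_n}))\cong \widetilde{\mathcal T}^{(s)}_{k_1,\dots,k_n}$ by the monomial identification of the previous step; this gives the first claimed equality, and since $\iota^2=\mathrm{id}$ (as $\iota$ applied twice sends $Y_{i,aq^s}\mapsto Y_{i,aq^{2n+2-(2n+2-s)}}=Y_{i,aq^s}$), applying $\iota$ to both sides yields the second. The main obstacle I expect is not conceptual but bookkeeping: verifying that the spectral shift $2n+2$ together with the chosen base point $a$ makes the superscripts match \emph{exactly} as written (rather than off by some shift depending on $k_1,\dots,k_n$), i.e. checking that the exponent sums $4k_n+2\sum k_{n-p}+j$ in $\widetilde T$ and their negatives in $T$ are genuinely exchanged by $s\mapsto 2n+2-s$ applied to every $Y_{i,aq^s}$-factor. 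This is a finite, if slightly tedious, induction on $n$ (or a direct telescoping check), and I would relegate it to a short computation once the duality framework above is in place.
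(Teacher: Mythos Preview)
Your proposal is correct and follows essentially the same approach as the paper, which simply refers the reader to the analogous Lemma~7.3 in \cite{LM13}: one uses that $\iota$ realizes the effect of duality on $q$-characters (with the type-$C_n$ shift $r^\vee h^\vee = 2n+2$), identifies the resulting highest $l$-weight monomial with that of the other family, and invokes $\iota^2=\mathrm{id}$ for the second equality. One notational slip to clean up: when you write $L(\iota(T^{(s)}_{k_1,\dots,k_n}))$ in the conclusion, recall that $\iota(T^{(s)}_{k_1,\dots,k_n})$ is anti-dominant, so what you mean (and said correctly earlier) is that $\iota(\chi_q(\mathcal T^{(s)}_{k_1,\dots,k_n}))$ is the $q$-character of the simple module whose \emph{lowest} monomial is $\iota(T^{(s)}_{k_1,\dots,k_n})$, equivalently whose highest monomial is $\iota$ applied to the lowest monomial of $\mathcal T^{(s)}_{k_1,\dots,k_n}$---and that this equals $\widetilde T^{(s)}_{k_1,\dots,k_n}$.
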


\begin{proof}
The proof is similar to the proof of Lemma 7.3 in \cite{LM13}.
\end{proof}

\begin{theorem}\label{dual-M-system}
Let $s\in \mathbb{Z}$, $k_{1},k_{2},\ldots, k_{n} \in \mathbb{Z}_{\geq 0}$. We have

\begin{gather}\label{eqn6}
\begin{split}
[\widetilde{\mathcal T}^{(s)}_{k_{1},k_{2}-1,k_{3},\ldots,k_{n-1},0}][\widetilde{\mathcal T}^{(s)}_{k_{1},k_{2},k_{3},\ldots,k_{n-1},0}]=&
[\widetilde{\mathcal T}^{(s)}_{k_{1}-1,k_{2},k_{3},\ldots,k_{n-1},0}][\widetilde{\mathcal T}^{(s)}_{k_{1}+1,k_{2}-1,k_{3},\ldots,k_{n-1},0}]\\
&+[\widetilde{\mathcal T}^{(s)}_{0,k_{2}-1,k_{3},\ldots,k_{n-1},0}][\widetilde{\mathcal T}^{(s)}_{0,k_{1}+k_{2},k_{3},\ldots,k_{n-1},0}],
\end{split}
\end{gather}
where $k_{1},k_{2}>0;$

\begin{gather}\label{eqn7}
\begin{split}
[\widetilde{\mathcal T}^{(s)}_{0,\ldots,0,k_{i},k_{i+1}-1,k_{i+2},\ldots,k_{n-1},0}][\widetilde{\mathcal T}^{(s)}_{0,\ldots, 0,k_{i},k_{i+1},k_{i+2},\ldots,k_{n-1},0}]=
&[\widetilde{\mathcal T}^{(s)}_{0,\ldots, 0,k_{i}+1,k_{i+1}-1,k_{i+2},\ldots,k_{n-1},0}][\widetilde{\mathcal T}^{(s)}_{0,\ldots, 0,k_{i}-1,k_{i+1},k_{i+2}\ldots,k_{n-1},0}]\\
&+[\widetilde{\mathcal T}^{(s)}_{0,\ldots,0,\underset{i+1}{k_{i}+k_{i+1}},k_{i+2},\ldots,k_{n-1},0}][\widetilde{\mathcal T}^{(s)}_{0,\ldots,0,\underset{i-1}{k_{i}},0,\underset{i+1}{k_{i+1}-1},k_{i+2},,\ldots,k_{n-1},0}],
\end{split}
\end{gather}
where $k_{i},k_{i+1}>0$, $1 < i \leq n-2;$

\begin{gather}\label{eqn8}
\begin{split}
[\widetilde{\mathcal T}^{(s)}_{k_{1},0,\ldots,0,k_{j}-1,k_{j+1},\ldots,k_{n-1},0}][\widetilde{\mathcal T}^{(s)}_{k_{1},0,\ldots,0,k_{j},k_{j+1},\ldots,k_{n-1},0}]=
&[\widetilde{\mathcal T}^{(s)}_{k_{1}+1,0,\ldots,0,k_{j}-1,k_{j+1},\ldots,k_{n-1},0}]
[\widetilde{\mathcal T}^{(s)}_{k_{1}-1,0,\ldots, 0,k_{j},k_{j+1}\ldots,k_{n-1},0}]\\
&+[\widetilde{\mathcal T}^{(s)}_{0,k_{1},0,\ldots,0,k_{j},k_{j+1},\ldots,k_{n-1},0}][\widetilde{\mathcal T}^{(s)}_{0,\ldots,0,k_{j}-1,k_{j+1},\ldots,k_{n-1},0}],
\end{split}
\end{gather}
where $2< j\leq n-1;$

\begin{gather}\label{eqn9}
\begin{split}
[\widetilde{\mathcal T}^{(s)}_{0,\ldots,0,\underset{i}{k_{i}},0,\ldots,0,\underset{j}{k_{j}-1},k_{j+1},\ldots,k_{n-1},0}][\widetilde{\mathcal T}^{(s)}_{0,\ldots, 0,\underset{i}{k_{i}},0,\ldots,0,\underset{j}{k_{j}},k_{j+1},\ldots,k_{n-1},0}]=
&[\widetilde{\mathcal T}^{(s)}_{0,\ldots, 0,\underset{i}{k_{i}-1},0,\ldots,0,\underset{j}{k_{j}},k_{j+1},\ldots,k_{n-1},0}] [\widetilde{\mathcal T}^{(s)}_{0,\ldots, 0,\underset{i}{k_{i}+1},0,\ldots,0,\underset{j}{k_{j}-1},k_{j+1}\ldots,k_{n-1},0}]\\
&+[\widetilde{\mathcal T}^{(s)}_{0,\ldots,0,\underset{i+1}{k_{i}},0, \ldots, 0,\underset{j}{k_{j}},k_{j+1},\ldots,k_{n-1},0}][\widetilde{\mathcal T}^{(s)}_{0,\ldots,0,\underset{i-1}{k_{i}},0,\ldots, 0,\underset{j}{k_{j}-1},k_{j+1},\ldots,k_{n-1},0}],
\end{split}
\end{gather}
where $2< i+1< j \leq n-1.$
\vskip 0.3cm
In the following, let $k_{1},\ldots, k_{n-1}\in  \mathbb{Z}_{\geq 0},\ k_n\in \mathbb{Z}_{>0}$ and $s\in \mathbb{Z}$.

\begin{gather}\label{eqn511d}
\begin{split}
[\mathcal T^{(s)}_{k_1,\ldots,k_m-1,0,\ldots,0,k_n}][\mathcal T^{(s-4)}_{k_1,\ldots,k_m,0,\ldots,0,1,k_n}]=&[\mathcal T^{(s)}_{k_1,\ldots,k_m,0,\ldots,0,1,k_n-1}][\mathcal T^{(s-4)}_{k_1,\ldots,k_m-1,0,\ldots,0,k_n+1}]\\
&+[\mathcal T^{(s+4k_n)}_{k_1,\ldots,k_m-1,0,\ldots,0,0}][\mathcal T^{(s-4)}_{k_1,\ldots,k_m,0,\ldots,0,{2k_n+1},0}],
\end{split}
\end{gather}
where $1\leq m\leq n-2$ and $0\leq k_1+\cdots+k_m\leq2;$

\begin{gather}\label{eqn512d}
\begin{split}
[\mathcal T^{(s)}_{k_1,\ldots,k_m,0,\ldots,0,k_{n-1}-2,k_n}][\mathcal T^{(s-4)}_{k_1,\ldots,k_m,0,\ldots,0,k_{n-1},k_n}]=&[\mathcal T^{(s)}_{k_1,\ldots,k_m,0,\ldots,0,k_{n-1},k_{n}-1}][\mathcal T^{(s-4)}_{k_1,\ldots,k_m,0,\ldots,0,k_{n-1}-2,,k_{n}+1}]\\
&+[\mathcal T^{(s+4k_n)}_{k_1,\ldots,k_m,0,\ldots,0,k_{n-1}-2,0}][\mathcal T^{(s-4)}_{k_1,\ldots,k_m,0,\ldots,0,2k_n+k_{n-1},0}],
\end{split}
\end{gather}
where $1\leq m\leq n-2,\ k_{n-1}\geq2$ and $0\leq k_1+\cdots+k_m\leq2;$

\begin{gather}\label{eqn5211d}
\begin{split}
[\mathcal T^{(s)}_{k_1,\ldots,k_m-1,0,\ldots,0,\ldots,0,k_n}][\mathcal T^{(s-4)}_{k_1,\ldots,k_m,0,\ldots,0,\underset{l}1,0,\ldots,0,k_n}]=&[\mathcal T^{(s)}_{k_1,\ldots,k_m,0,\ldots,0,\underset{l}1,0,\ldots,0,k_n-1}][\mathcal T^{(s-4)}_{k_1,\ldots,k_m-1,0,\ldots,0,\ldots,0,k_n+1}]\\
&+[\mathcal S^{(s-4)}_{k_1,\ldots,k_m,0,\ldots,0,\underset{l}1,0,\ldots,0,2k_n,0}],
\end{split}
\end{gather}
where $1\leq m<l\leq n-2,\ 0\leq k_1+\cdots+k_m\leq n-l+1;$


\begin{gather}\label{eqn5221d}
\begin{split}
[\mathcal T^{(s)}_{k_1,\ldots,k_m,0,\ldots,0,k_l-2,0,\ldots,0,k_n}][\mathcal T^{(s-4)}_{k_1,\ldots,k_m,0,\ldots,0,k_l,0,\ldots,0,k_n}]=&[\mathcal T^{(s)}_{k_1,\ldots,k_m,0,\ldots,0,k_l,0,\ldots,0,k_n-1}][\mathcal T^{(s-4)}_{k_1,\ldots,k_m,0,\ldots,0,k_l-2,0,\ldots,0,k_n+1}]\\
&+[\mathcal S^{(s-4)}_{k_1,\ldots,k_m,0,\ldots,0,k_l,0,\ldots,0,2k_n,0}],
\end{split}
\end{gather}
where $1\leq m<l\leq n-2,\ k_l\geq2$ and $0\leq k_1+\cdots+k_m\leq n-l+1$.
\end{theorem}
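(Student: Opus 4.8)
\textbf{Proof plan for Theorem \ref{dual-M-system}.}
The strategy is to transport the equations of Theorem \ref{M system of type Cn}, which have already been established, to their ``dual'' versions via the ring involution $\iota$ of Lemma \ref{lemma2}. Concretely, I would apply $\iota$ to each of the equations \eqref{eqn1}--\eqref{eqn5221} in $\mathbb{Z}\mathcal P$ and use the identities $\chi_q(\widetilde{\mathcal T}^{(s)}_{k_1,\ldots,k_n})=\iota(\chi_q(\mathcal T^{(s)}_{k_1,\ldots,k_n}))$ and $\chi_q(\mathcal T^{(s)}_{k_1,\ldots,k_n})=\iota(\chi_q(\widetilde{\mathcal T}^{(s)}_{k_1,\ldots,k_n}))$. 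Since $\iota$ is a ring homomorphism, it sends a product $[\mathcal T_1][\mathcal T_2]$ to $[\widetilde{\mathcal T}_1][\widetilde{\mathcal T}_2]$ and a sum to a sum, so each relation in Theorem \ref{M system of type Cn} of the form $[\mathcal T_1][\mathcal T_2]=[\mathcal T_3][\mathcal T_4]+[\mathcal S]$ is carried to a relation $[\widetilde{\mathcal T}_1][\widetilde{\mathcal T}_2]=[\widetilde{\mathcal T}_3][\widetilde{\mathcal T}_4]+\iota([\mathcal S])$, and conversely. The content of the theorem is then reduced to two bookkeeping tasks: (i) check that $\iota$ sends the highest $l$-weight monomial of each module appearing in \eqref{eqn1}--\eqref{eqn5221} to the highest $l$-weight monomial of the corresponding module in \eqref{eqn6}--\eqref{eqn5221d} (with the correct shift of the spectral parameter $s$), and (ii) check that $\iota([\mathcal S])$ is again the class of the appropriate module $\mathcal S$ or $\widetilde{\mathcal S}$ in the dual list.

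For task (i), I would work directly from the explicit formulas for $\widetilde T^{(s)}_{k_1,\ldots,k_n}$ and $T^{(s)}_{k_1,\ldots,k_n}$ in Section \ref{definition of minimal affinizations}. The map $\iota$ acts on the variable $Y_{i,aq^r}$ by $Y_{i,aq^r}\mapsto Y_{i,aq^{2n-r+2}}^{-1}$; since $\widetilde T^{(s)}$ and $T^{(s)}$ are built from the same pattern of $Y_{i,aq^r}$'s with the sign of all exponents $r$ reversed, one sees that $\iota$ interchanges the two families up to an explicit affine reparametrization $r\mapsto 2n-r+2$ of $s$. This is exactly the computation already carried out, in the type considered there, in the proof of Lemma 7.3 of \cite{LM13}, to which Lemma \ref{lemma2} refers; so I would simply record the resulting correspondence of indices and spectral parameters, matching each term of \eqref{eqn1} with a term of \eqref{eqn6}, each term of \eqref{eqn2} with \eqref{eqn7}, and so on, and likewise for \eqref{eqn511}--\eqref{eqn512} with \eqref{eqn511d}--\eqref{eqn512d} and \eqref{eqn5211}--\eqref{eqn5221} with \eqref{eqn5211d}--\eqref{eqn5221d}. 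In particular the constraints $k_1+\cdots+k_m\le 2$ or $\le n-l+1$ and the ranges of $i,j,m,l$ are preserved by $\iota$ since they do not involve the spectral parameter.

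For task (ii), the key point is that $\iota$ maps classes of simple modules to classes of simple modules: if $m_+$ is dominant then $\iota(\chi_q(m_+))$, being the image of an irreducible $q$-character under a ring automorphism of the appropriate type, is again the $q$-character of a simple module, namely $L$ of the dominant monomial obtained by applying the reparametrization to $m_+$ (this is the same mechanism already used implicitly in Lemma \ref{lemma2}). Applying this to the ``$\mathcal S$'' terms: the summand $[\mathcal T^{(s)}_{0,k_2-1,\ldots}][\mathcal T^{(s)}_{0,k_1+k_2,\ldots}]$ in \eqref{eqn1} is a product of two minimal affinizations, so $\iota$ carries it to the product $[\widetilde{\mathcal T}^{(s)}_{0,k_2-1,\ldots}][\widetilde{\mathcal T}^{(s)}_{0,k_1+k_2,\ldots}]$ appearing in \eqref{eqn6}; and the summand $[\widetilde{\mathcal S}^{(s-4)}_{\ldots}]$ in \eqref{eqn5211} is carried to $[\mathcal S^{(s-4)}_{\ldots}]$ in \eqref{eqn5211d}, using the defining factorizations of $\widetilde{\mathcal S}$ and $\mathcal S$ in Section \ref{definition of minimal affinizations}, which are themselves interchanged by $\iota$ term by term. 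The main obstacle I anticipate is purely clerical rather than conceptual: verifying that the particular shift ``$s-4$'' (and ``$s+4k_n$'') appearing in \eqref{eqn511}--\eqref{eqn5221} is transported by $\iota$ to exactly the shift ``$s-4$'' (and ``$s+4k_n$'') in \eqref{eqn511d}--\eqref{eqn5221d}, since the reparametrization $r\mapsto 2n-r+2$ must be tracked carefully through the nested products in the definitions of $\widetilde T^{(s)}$ and $T^{(s)}$ and through the auxiliary modules $\widetilde{\mathcal S},\mathcal S$. Once that index-matching lemma is in place, the theorem follows immediately by applying $\iota$ to Theorem \ref{M system of type Cn}; I would state the proof as: ``Apply the ring homomorphism $\iota$ of Lemma \ref{lemma2} to each equation of Theorem \ref{M system of type Cn} and use Lemma \ref{lemma2} together with the explicit highest $l$-weight formulas to identify the resulting terms.''
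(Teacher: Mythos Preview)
Your proposal is correct and follows essentially the same approach as the paper: apply the ring homomorphism $\iota$ of Lemma \ref{lemma2} to each equation of Theorem \ref{M system of type Cn}, and verify via the explicit highest $l$-weight formulas that $\iota$ carries $\widetilde{\mathcal T}^{(s)}_{k_1,\ldots,k_n}$ to $\mathcal T^{(s)}_{k_1,\ldots,k_n}$ (the net effect being the substitution $i_s\mapsto i_{-s}$) and hence transports the system termwise to the dual system. The paper's proof is slightly more explicit about computing this substitution by composing the ``highest-to-lowest'' map $i_s\mapsto i^{-1}_{2n+s+2}$ with $\iota$, but the content is the same as your tasks (i) and (ii).
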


\begin{proof}
The lowest weight monomial of $ \chi_q(\widetilde{\mathcal{T}}_{k_{1},\ldots,k_{n}}^{(s)})$ is obtained from the highest weight monomial of $ \chi_q(\widetilde{\mathcal{T}}_{k_{1},\ldots,k_{n}}^{(s)}) $ by the substitutions:
 \begin{align*}
1_s \mapsto 1^{-1}_{2n+s+2},\ 2_s \mapsto 2^{-1}_{2n+s+2}, \ldots, n_s \mapsto n^{-1}_{2n+s+2}.
\end{align*}
After we apply $\iota$ to $\chi_{q}(\widetilde{\mathcal T}_{k_{1},\ldots,k_{n}}^{(s)})$, the lowest weight monomial of
$\chi_q(\widetilde{\mathcal{T}}_{k_{1},\ldots,k_{n}}^{(s)})$
becomes the highest weight monomial of $\iota(\chi_{q}(\widetilde{\mathcal T}_{k_{1},\ldots,k_{n}}^{(s)})$. Therefore by Lemma \ref{lemma2}, the highest weight monomial of $\iota(\chi_{q}(\widetilde{\mathcal T}_{k_{1},\ldots,k_{n}}^{(s)})$ is obtained from the lowest weight monomial of $ \chi_q(\widetilde{\mathcal{T}}_{k_{1},\ldots,k_{n}}^{(s)})$ by the substitutions:
\begin{align*}
1_s \mapsto 1^{-1}_{2n-s+2},\ 2_s \mapsto 2^{-1}_{2n-s+2}, \ldots,\ n_s \mapsto n^{-1}_{2n-s+2}.
\end{align*}
It follows that the highest weight monomial of $\iota(\chi_{q}(\widetilde{\mathcal T}_{k_{1},\ldots,k_{n}}^{(s)})$ is obtained from the highest weight monomial of $ \chi_q(\widetilde{\mathcal{T}}_{k_{1},\ldots,k_{n}}^{(s)}) $ by the substitutions:
\begin{align*}
1_s \mapsto 1_{-s},\ 2_s \mapsto 2_{-s}, \ldots,\ n_s \mapsto n_{-s}.
\end{align*}
Therefore the dual system of type $C_n$ is obtained applying $\iota$ to both sides of every equation of the system in Theorem \ref{M system of type Cn}.

The simplify of every module in the summands on the right hand side of every equation in the dual system follows from Theorem $\ref{irreducible}$ and Lemma $\ref{lemma2}$.
\end{proof}

\begin{example}
The following are some equations in the system in Theorem \ref{dual-M-system}.
\begin{align*}
&[1_{2}][2_{1}1_{4}]=[1_{2}1_{4}][2_{1}]+[2_{1}2_{3}],\\
&[1_{2}1_{4}][2_{1}1_{4}1_{6}]=[2_{1}1_{4}][1_{2}1_{4}1_{6}]+[2_{1}2_{3}2_{5}],\\
&[3_{2}][2_{1}3_{6}]=[2_{1}][3_{2}3_{6}]+[2_{1}2_{3}2_{5}],\\
&[3_{2}3_{6}][2_{1}3_{6}3_{10}]=[2_{1}3_{6}][3_{2}3_{6}3_{10}]+[2_{1}2_{3}2_{5}2_{7}2_{9}],\\
&[1_{0}3_{6}][3_{2}]=[1_{0}][3_{2}3_{6}]+[1_{0}2_{3}2_{5}],\\
&[3_{2}3_{6}][1_{0}3_{6}3_{10}]=[1_{0}3_{6}][3_{2}3_{6}3_{10}]+[1_{0}2_{3}2_{5}2_{7}2_{9}],\\
&[3_{4}][1_{0}2_{3}3_{8}]=[1_{0}2_{3}][3_{4}3_{8}]+[1_{0}2_{3}2_{5}2_{7}],\\
&[3_{4}3_{8}][1_{0}2_{3}3_{8}3_{12}]=[1_{0}2_{3}3_{8}][3_{4}3_{8}3_{12}]+[1_{0}2_{3}2_{5}2_{7}2_{9}2_{11}],\\
&[4_{3}][1_{0}4_{7}]=[1_{0}][4_{3}4_{7}]+[1_{0}3_{4}3_{6}],\\
&[4_{5}][1_{0}2_{3}4_{9}]=[1_{0}2_{3}][4_{5}4_{9}]+[1_{0}2_{3}3_{6}3_{8}],\\
&[1_{0}4_{7}][1_{0}1_{2}2_{5}4_{11}]=[1_{0}1_{2}2_{5}][1_{0}4_{7}4_{11}]+[1^2_{0}1_{2}2_{5}3_{8}3_{10}].
\end{align*}
\end{example}

\subsection{A system corresponding to the system in Theorem \ref{dual-M-system}}
By replacing each $[\mathcal{\widetilde{T}}_{k_1,\ldots,k_n}^{(s)}]$ (resp. $[\mathcal{T}_{k_1,\ldots,k_n}^{(s)}]$) in the system of Theorem \ref{dual-M-system} with  $\chi(\widetilde{\mathfrak{m}}_{k_1,\ldots,k_n})$ (resp. $\chi(\mathfrak{m}_{k_1,\ldots,k_n})$), we obtain a system of equations consisting of the characters of $U_{q}\mathfrak{g}$-modules. The following are two equations in the system.
\begin{equation*}
\begin{split}
&\chi(\widetilde{\mathfrak m}_{k_{1},k_{2}-1,k_{3},\ldots,k_{n-1},0})\chi(\widetilde{\mathfrak m}_{k_{1},k_{2},k_{3},\ldots,k_{n-1},0})=\chi(\widetilde{\mathfrak m}_{k_{1}-1,k_{2},k_{3},\ldots,k_{n-1},0})\chi(\widetilde{\mathfrak m}_{k_{1}+1,k_{2}-1,k_{3},\ldots,k_{n-1},0})+\chi(\widetilde{\mathfrak m}_{0,k_{2}-1,k_{3},\ldots,k_{n-1},0})\chi(\widetilde{\mathfrak m}_{0,k_{1}+k_{2},k_{3},\ldots,k_{n-1},0}),\\
&\chi(\mathfrak m_{k_1,\ldots,k_m-1,0,\ldots,0,k_n})\chi(\mathfrak m_{k_1,\ldots,k_m,0,\ldots,0,1,k_n})=\chi(\mathfrak m_{k_1,\ldots,k_m,0,\ldots,0,1,k_n-1})\chi(\mathfrak m_{k_1,\ldots,k_m-1,0,\ldots,0,k_n+1})
+\chi(\mathfrak m_{k_1,\ldots,k_m-1,0,\ldots,0,0})\chi(\mathfrak m_{k_1,\ldots,k_m,0,\ldots,0,2k_n+1,0}).
\end{split}
\end{equation*}

\subsection{Relation between the systems in Theorem \ref{dual-M-system} and cluster algebras}
Let $I_1=\{1,2,\ldots,n-1 \}$ and
\begin{align*}
&S_{1}=\{2i+1\mid i\in \mathbb{Z}_{\geq 0}\},\quad S_{2}=\{2i\mid i\in \mathbb{Z}_{\geq 0}\},\\
&S_{3}=\{4i+1\mid i\in \mathbb{Z}_{\geq 0}\},\quad S_{4}=\{4i+3\mid i\in \mathbb{Z}_{\geq 0}\},\\
&S_{5}=\{4i\mid i\in \mathbb{Z}_{\geq 0}\},\ \ \ \ \ \quad S_{6}=\{4i+2\mid i\in \mathbb{Z}_{\geq 0}\}.
\end{align*}
Let
\begin{gather}
\begin{align*}
V=((S_1\cap I_1)\times S_2)\cup((S_2\cap I_1)\times S_1)\cup((\{n\}\cap S_2)\times S_3)\cup((\{n\}\cap S_2)\times S_4)\cup((\{n\}\cap S_1)\times S_5)\cup((\{n\}\cap S_1)\times S_6).
\end{align*}
\end{gather}
We define $\widetilde{Q}$ with vertex set $V$ as follows. The arrows of $\widetilde{Q}$ from the vertex $(i,r)$ to the vertex $(j,s)$ if and only if $b_{ij}\neq 0$ and $s=r-b_{ij}+d_{i}-d_{j}$.

Let ${\bf \widetilde{t}}={\bf \widetilde{t}}_{1}\cup {\bf \widetilde{t}}_{2}$, where
\begin{align}\label{dt1}
{\bf \widetilde{t}}_{1} =\{\widetilde{t}^{(-n+i-[i]_{\text{mod} 2}+1)}_{0,\ldots,0,k_i,0,\ldots,0}\mid i=1,\ldots,n-1, k_1,\ldots,k_{n-1}\in \mathbb{Z}_{\geq0}\},
\end{align}

\begin{align}\label{dt2}
{\bf \widetilde{t}}_{2} =
\begin{cases}
     \{t^{(-4k_{n}+4)}_{0,\ldots, 0, \ldots,0, k_n}, t^{(-4k_{n}+2)}_{0,\ldots, 0, \ldots,0, k_n}\mid \ n \text{ is } \text{odd},\  k_{n}\in \mathbb{Z}_{\geq1} \}, \\
    \\
     \{t^{(-4k_{n}+3)}_{0,\ldots, 0, \ldots,0, k_n}, t^{(-4k_{n}+1)}_{0,\ldots, 0, \ldots,0, k_n}\mid \ n \text { is } \text{even},\ k_{n}\in \mathbb{Z}_{\geq1} \}, \\
     \\
     \{t^{(-2k_i-n+i+[i]_{\text{mod} 2}+1)}_{0,\ldots, 0,k_i,0, \ldots,0}\mid i=1,\ldots,n-1,\ k_1,\ldots,k_{n-1}\in \mathbb{Z}_{\geq0}\}.
     \end{cases}
\end{align}

Let $\widetilde{\mathscr{A}}$ be the cluster algebra defined by the initial seed $({\bf \widetilde{t}}, \widetilde{Q})$.
By similar arguments in Section \ref{definition of cluster algebra A}, we have the following theorem.
\begin{theorem}\label{minimal affinizations correspond to cluster variablesII}
Every equation in the system in Theorem \ref{dual-M-system} corresponds to a mutation equation of the cluster algebra $\widetilde{\mathscr{A}}$. Every minimal affinization in the system in Theorem \ref{dual-M-system} corresponds to a cluster variable of the cluster algebra $\widetilde{\mathscr{A}}$. Therefore the Hernandez-Leclerc conjecture (Conjecture 13.2 in \cite{HL10} and Conjecture 9.1 in \cite{Le10}) is true for the minimal affinizations Theorem \ref{dual-M-system}.
\end{theorem}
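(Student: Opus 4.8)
The plan is to run exactly the same argument used for Theorem \ref{connection with cluster algebra 1}, transported through the ring involution $\iota$ of Lemma \ref{lemma2}. First I would observe that the quiver $\widetilde{Q}$ is obtained from $Q$ by the spectral-parameter reflection $r \mapsto -r$ on the second coordinate of each vertex, together with the reversal of all arrows; indeed the defining rule for arrows of $\widetilde{Q}$, namely $s = r - b_{ij} + d_i - d_j$, is precisely the rule for $Q$ with $r,s$ negated. Consequently there is an isomorphism of quivers $Q \to \widetilde{Q}$, and hence an isomorphism of cluster algebras $\mathscr{A} \to \widetilde{\mathscr{A}}$ carrying the initial seed $({\bf t},Q)$ to $({\bf \widetilde t},\widetilde Q)$: on initial cluster variables it sends $t^{(s)}_{0,\ldots,0,k_i,0,\ldots,0} \mapsto \widetilde t^{(-s)}_{0,\ldots,0,k_i,0,\ldots,0}$ and $\widetilde t^{(s)}_{0,\ldots,0,k_n} \mapsto t^{(-s)}_{0,\ldots,0,k_n}$, which matches (\ref{dt1})--(\ref{dt2}) against (\ref{at1})--(\ref{at2}) after the shift built into the exponents. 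This is a purely combinatorial check on the vertex sets $V$ and the exponent formulas.

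Next I would use the fact, established in the proof of Theorem \ref{dual-M-system}, that applying $\iota$ to every equation of the system in Theorem \ref{M system of type Cn} produces exactly the equations of the system in Theorem \ref{dual-M-system}, with $\widetilde{\mathcal T}^{(s)}_{k_1,\ldots,k_n} \leftrightarrow \mathcal T^{(-s)}_{k_1,\ldots,k_n}$ and $\widetilde{\mathcal S}^{(s)}_{\ldots} \leftrightarrow \mathcal S^{(-s)}_{\ldots}$ (the spectral shift being absorbed as in the proof of Theorem \ref{dual-M-system}). Combining this with Theorem \ref{connection with cluster algebra 1}: each equation (\ref{eqn1})--(\ref{eqn5221}) was shown there to coincide with a mutation equation (\ref{eqn10})--(\ref{variables 18}) in $\mathscr{A}$, obtained along the explicit mutation sequences $\seq$ of Cases 1 and 2. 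Transporting those same mutation sequences through the quiver isomorphism $Q \cong \widetilde Q$ (which amounts to relabelling the columns $C_i$ by their $\iota$-images) yields mutation sequences in $\widetilde{\mathscr{A}}$; the mutation equations they produce are the $\iota$-images of (\ref{eqn10})--(\ref{variables 18}), which by the previous sentence are precisely the equations (\ref{eqn6})--(\ref{eqn5221d}). Hence every equation in Theorem \ref{dual-M-system} is a genuine mutation equation of $\widetilde{\mathscr{A}}$, and every dominant monomial $\widetilde{\mathcal T}^{(s)}_{k_1,\ldots,k_n}$, $\mathcal T^{(s)}_{k_1,\ldots,k_n}$, $\mathcal S^{(s)}_{\ldots}$ appearing there is obtained as a cluster variable, being the $\iota$-image of a cluster variable of $\mathscr{A}$. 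The final assertion about the Hernandez--Leclerc conjecture follows verbatim as in Theorem \ref{connection with cluster algebra 1}, since the monomials realized as cluster variables are exactly the classes of the minimal affinizations (and the auxiliary simple modules $\mathcal S^{(s)}_{\ldots}$) listed in Theorem \ref{dual-M-system}.

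The main obstacle will be verifying carefully that the quiver isomorphism $Q \cong \widetilde Q$ really does intertwine the two families of mutation sequences, i.e. that the relabelling of columns $C_1,\ldots,C_{n+1}$ induced by $r \mapsto -r$ sends the sequences $M_l^{(1)}$, $N_{n,r}^{(1)}$, $S_t$, $S'_t$ of Case 1 and Case 2 to the analogous sequences adapted to $\widetilde Q$, and that under this relabelling the exponent bookkeeping (the shifts by $-n+l-[l]_{\mathrm{mod}\,2}+1$, by $-4k_n - 2\sum k_i - n + r + [r]_{\mathrm{mod}\,2}+1$, etc.) is turned into the exponents demanded by (\ref{dt1})--(\ref{dt2}); the signs and the $[\,\cdot\,]_{\mathrm{mod}\,2}$ terms need to be tracked with care. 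All of this is the dual of bookkeeping already carried out in Section \ref{Relation between the M-systems and cluster algebras}, so in the write-up I would state the quiver isomorphism explicitly, note that it is spectral-parameter reflection composed with arrow reversal, and then say that the rest of the proof is obtained from Section \ref{Relation between the M-systems and cluster algebras} by applying $\iota$ throughout, exactly as Theorem \ref{dual-M-system} was deduced from Theorem \ref{M system of type Cn}.
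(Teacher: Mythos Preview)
Your approach is sound but takes a genuinely different route from the paper. The paper's proof is a single sentence: ``By similar arguments in Section~\ref{definition of cluster algebra A}'' --- in other words, one is meant to repeat verbatim the construction of mutation sequences in Section~\ref{Relation between the M-systems and cluster algebras} (the sequences $M_l^{(1)}$, $N_{n,r}^{(1)}$, $S_t$, $S'_t$, the tables, and the matching of (\ref{variables 2})--(\ref{variables 11}) with the system equations) for the seed $({\bf \widetilde t},\widetilde Q)$ directly, without invoking any relation to $\mathscr A$. Your proposal instead builds an explicit quiver isomorphism $Q\to\widetilde Q$ and transports both the mutation sequences and the identification of cluster variables through it, using the involution $\iota$ of Lemma~\ref{lemma2} to match the $q$-character equations on the two sides. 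This is more conceptual: it explains \emph{why} the dual argument works rather than just asserting that it does, and it spares you from rewriting the long case analysis of Section~\ref{Relation between the M-systems and cluster algebras}. The cost is exactly the bookkeeping you flag as the main obstacle --- checking that the vertex bijection carries the initial clusters (\ref{at1})--(\ref{at2}) to (\ref{dt1})--(\ref{dt2}) and that the column labelling of the mutation sequences survives the reflection.

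One small correction: the map $(i,r)\mapsto(i,-r)$ is already a quiver isomorphism $Q\to\widetilde Q$ \emph{without} reversing arrows. Indeed, if $(i,r)\to(j,s)$ in $Q$ then $s=r+b_{ij}-d_i+d_j$; substituting $r\mapsto -r$, $s\mapsto -s$ gives $-s=-r-b_{ij}+d_i-d_j$, which is precisely the rule for an arrow $(i,-r)\to(j,-s)$ in $\widetilde Q$. This does not affect your argument (reversing all arrows of a quiver yields an isomorphic cluster algebra anyway), but it simplifies the transport of mutation sequences, since the columns $C_1,\ldots,C_{n+1}$ and their internal ordering are then carried over without any reindexing.
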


\section{Proof of theorem \ref{The special modules of $C_n$}}\label{proof of special}

In this section, we prove Theorem \ref{The special modules of $C_n$}. Without loss of generality, we suppose that $s=0$ in $\widetilde{\mathcal T}^{(s)}$, where $\widetilde{\mathcal T}$ is a module in Theorem \ref{The special modules of $C_n$}.

\subsection{The cases of $ \widetilde{\mathcal T}^{(0)}_{0,\ldots,0,k_{n-i},0,\ldots,0,k_n}\ (1\leq i\leq n-1)$}
\ \\
\textbf{Case 1}. $i=1$. Let $m_+=\widetilde{T}^{(0)}_{0,\ldots,0, k_{n-1},k_n}$. Then

\begin{align*}
m_+=n_0n_4\ldots n_{4k_n-4}(n-1)_{4k_n+1}(n-1)_{4k_n+3}\ldots (n-1)_{4k_n+2k_{n-1}-1}.
\end{align*}
Let
\begin{align*}
U=I\times \{aq^s : s\in \mathbb{Z}, s\leq 4k_n+2k_{n-1}-1\}.
\end{align*}
Since all monomials in $\mathscr{M}(\chi_q(m_+)-$trunc$_{m_+\mathcal{Q}^{-}_{U}}\chi_q(m_+))$ are right-negative, it is sufficient to show that trunc$_{m_+\mathcal{Q}^{-}_{U}}\chi_q(m_+)$ is special. Let
\begin{align*}
\mathscr{M}=\{m_+\prod^{s-1}_{j=0}A^{-1}_{n, 4k_n-4j-2}: 0\leq s\leq k_n-1\}.
\end{align*}
It is easy to see that $\mathscr{M}$ satisfies the conditions in Theorem \ref{truncated $q$-characters}. Therefore
\vskip 0.2in
\begin{center}
trunc$_{m_+\mathcal{Q}^{-}_{U}}\chi_q(m_+)=\sum_{m\in \mathscr M}m$
\end{center}
\vskip 0.2in
and hence trunc$_{m_+\mathcal{Q}^{-}_{U}}\chi_q(m_+)$ is special.

\textbf{Case 2}. $i=2$. Let $m_+=\widetilde{T}^{(0)}_{0,\ldots,0,, k_{n-2},0,k_n}$. Then
\begin{align*}
m_+=n_0n_4\ldots n_{4k_n-4}(n-2)_{4k_n+2}(n-2)_{4k_n+4}\ldots(n-2)_{4k_n+2k_{n-2}}.
\end{align*}

\textbf{Case 2.1}. $k_n=1$. Let $g_+=\widetilde{T}^{(0)}_{0,\ldots,0,k_{n-2},0,1}$. Then
\begin{align*}
g_+=n_0(n-2)_6(n-2)_8\ldots(n-2)_{2k_{n-2}+4}.
\end{align*}
Let $U=I\times\{aq^s: s\in \mathbb{Z}, s\leq 2m+4\}$. The monomials in $\mathscr{M}(\chi_q(g_+))$ are
\begin{align*}
g_0=g_+,\ g_1=g_0A^{-1}_{n,2},\ g_2=g_1A^{-1}_{n-1, 4},\ g_3=g_2A^{-1}_{n-1,2},\ g_4=g_3A^{-1}_{n,4}.
\end{align*}
Therefore, the only dominant monomial in $\chi_q(g_+)$ is $g_+$.

\textbf{Case 2.2}. $k_n>1$. We write $m_+$ into two different monomials products, that is, $m_+=m'_1m'_2=m''_1m''_2$ for some monomials $m'_1, m'_2, m''_1, m''_2$. Since each monomial in the product is special, we can use the Frenkel-Mukhin algorithm to compute the $q$-characters of the monomial. By Lemma \ref{contains in a larger set}, we have $\mathscr{M}(\chi_q(m_+))\subset \mathscr{M}(\chi_q(m'_1)\chi_q(m'_2))\cap \mathscr{M}(\chi_q(m''_1)\chi_q(m''_2))$ and we will show that the only dominant monomial in $\chi_q(m_+)$ is $m_+$. Hence $\chi_q(m_+)$ is special.

Let $m_+=m'_1m'_2$, where
\begin{align*}
&m^{'}_{1}=n_0n_4\ldots n_{4k_n-8},\\
&m^{'}_{2}=n_{4k_n-4}(n-2)_{4k_n+2}(n-2)_{4k_n+4}\ldots(n-2)_{4k_n+2k_{n-2}}.
\end{align*}
In case 2.1, we have shown that $m^{'}_{2}$ is special. Therefore the Frenkel-Mukhin algorithm works for $m^{'}_{2}$. We will use the Frenkel-Mukhin algorithm to compute $\chi_{q}(m^{'}_{1})\chi_{q}(m^{'}_{2})$. Let $m=m_1m_2$ be a dominant monomial, where $m_i\in \mathscr M(\chi_q(m^{'}_{i})), i=1, 2$.

Suppose that $m_2\neq m^{'}_{2}$. If $m_2$ is right-negative, then $m$ is a right-negative monomial and therefore $m$ is not dominant. This is a contradiction. Hence $m_2$ is not right-negative. By Case 2.1, $m_2$ is one of the following monomials\\
\begin{gather}
\begin{align*}
\begin{split}
&\overline{m}_{1}=m^{'}_{2}A^{-1}_{n, 4k_n-2}=n^{-1}_{4k_n}(n-1)_{4k_n-3}(n-1)_{4k_n-1}(n-2)_{4k_n+2}(n-2)_{4k_n+4}\ldots(n-2)_{4k_n+2k_{n-2}},\\
&\overline{m}_{2}=\overline{m}_{1}A^{-1}_{n-1, 4k_n}=(n-1)_{4k_n-3}(n-1)^{-1}_{4k_n+1}(n-2)_{4k_n}(n-2)_{4k_n+2}(n-2)_{4k_n+4}\ldots(n-2)_{4k_n+2k_{n-2}},\\
&\overline{m}_{3}=\overline{m}_{2}A^{-1}_{n-1,4k_n-2}=(n-1)^{-1}_{4k_n-1}(n-1)^{-1}_{4k_n+1}n_{4k_n-2}(n-2)_{4k_n-2}(n-2)_{4k_n}(n-2)_{4k_n+2}(n-2)_{4k_n+4}\ldots(n-2)_{4k_n+2k_{n-2}},\\
&\overline{m}_{4}=\overline{m}_{3}A^{-1}_{n, 4k_n}=n^{-1}_{4k_n+2}(n-2)_{4k_n-2}(n-2)_{4k_n}(n-2)_{4k_n+2}(n-2)_{4k_n+4}\ldots(n-2)_{4k_n+2k_{n-2}}.\\
\end{split}
\end{align*}
\end{gather}
We can see that $n^{-1}_{4k_n}$ cannot be canceled by any monomial in $\chi_{q}(m^{'}_{1})$. Therefore $m=m_1m_2 (m_1\in \chi_{q}(m^{'}_{1}))$ is not dominant. This is a contradiction. Hence $m_2\neq \overline{m}_{1}$. Similarly, $m_2$ cannot be $\overline{m}_{i}, i=2, 3, 4$. This is a contradiction. Therefore $m_2=m^{'}_{2}$.

If $m_1\neq m^{'}_1$, then $m_1$ is right-negative. Since $m$ is dominant, each factor with a negative power in $m_1$ needs to be canceled by a factor in $m^{'}_2$. The only factor in $m^{'}_{2}$ which can be canceled is $n_{4k_n-4}$. We have $\mathscr{M}(\chi_q(m^{'}_{1}))\subset \mathscr{M}(\chi_{q}(n_0n_4\ldots n_{4k_n-12}))\chi_q(n_{4k_n-8})$. Only monomials in $\chi_q(n_{4k_n-8})$ can cancel $n_{4k_n-4}$. The only monomial in $\chi_q(n_{4k_n-8})$ which can cancel $n_{4k_n-4}$ is $n^{-1}_{4k_n-4}(n-1)_{4k_n-7}(n-1)_{4k_n-5}$. Therefore $m_1$ is in the set
\begin{align*}
\mathscr{M}(\chi_q(n_0n_4\ldots n_{4k_n-12}))n^{-1}_{4k_n-4}(n-1)_{4k_n-7}(n-1)_{4k_n-5}.
\end{align*}

If $m_1=(n_0n_4\ldots n_{4k_n-12})n^{-1}_{4k_n-4}(n-1)_{4k_n-7}(n-1)_{4k_n-5}$, then
\begin{align}\label{m}
\begin{split}
m=m_1m_2=&n_0n_4\ldots n_{4k_n-12}(n-1)_{4k_n-7}(n-1)_{4k_n-5}\\
  &(n-2)_{4k_n+2}(n-2)_{4k_n+4}\ldots(n-2)_{4k_n+2k_{n-2}}
\end{split}
\end{align}
is dominant. Suppose that
\begin{align*}
m_1\neq (n_0n_4\ldots n_{4k_n-12})n^{-1}_{4k_n-4}(n-1)_{4k_n-7}(n-1)_{4k_n-5}.
\end{align*}
Then $m_1=g_1n^{-1}_{4k_n-4}(n-1)_{4k_n-7}(n-1)_{4k_n-5}$, where $g_1$ is a nonhighest monomial in $\chi_q(n_0n_4\ldots\\ n_{4k_n-12})$. Since $g_1$ is right-negative, $(n-1)_{4k_n-7}$ or $(n-1)_{4k_n-5}$ should cancel a factor of $g_1$ with a negative power. It is easy to see that there exists either a factor $(n-1)^{2}_{4k_n-9}$ or $(n-1)^{2}_{4k_n-7}$ in a monomial in $\chi_q(n_0n_4\ldots n_{4k_n-12})n^{-1}_{4k_n-4}(n-1)_{4k_n-7}(n-1)_{4k_n-5}$ by using the Frenkel-Mukhin algorithm. Therefore we need a factor $(n-1)_{4k_n-9}$ or $(n-1)_{4k_n-7}$ in a monomial in $\chi_q(n_0n_4\ldots n_{4k_n-12})$. We have
\begin{align*}
\begin{split}
\chi_q&(n_0n_4\ldots n_{4k_n-12})n^{-1}_{4k_n-4}(n-1)_{4k_n-7}(n-1)_{4k_n-5}\\
      &\subseteq \chi_q(n_0n_4\ldots n_{4k_n-16})\chi_q(n_{4k_n-12})n^{-1}_{4k_n-4}(n-1)_{4k_n-7}(n-1)_{4k_n-5}.
\end{split}
\end{align*}
The factors $(n-1)_{4k_n-9}$ and $(n-1)_{4k_n-7}$ can only come from the monomials in $\chi_q(n_{4k_n-12})$. But notice that any monomial in $\chi_q(n_{4k_n-12})$ dose not have a factor $(n-1)_{4k_n-9}$. The only monomial in $\chi_q(n_{4k_n-12})$ which contains a factor $(n-1)_{4k_n-7}$ is
\begin{align*}
(n-3)_{4k_n-7}(n-2)_{4k_n-10}(n-2)^{-1}_{4k_n-6}(n-1)_{4k_n-7}n^{-1}_{4k_n-6}.
\end{align*}
Therefore $m_1$ is in the set
\begin{align*}
\begin{split}
\mathscr{M}&(\chi_q(n_0n_4\ldots n_{4k_n-16}))(n-3)_{4k_n-7}(n-2)_{4k_n-10}(n-2)^{-1}_{4k_n-6}(n-1)_{4k_n-7}n^{-1}_{4k_n-6}\\
           &n^{-1}_{4k_n-4}(n-1)_{4k_n-7}(n-1)_{4k_n-5}\\
          =&(\chi_q(n_0n_4\ldots n_{4k_n-16}))(n-3)_{4k_n-7}(n-2)_{4k_n-10}(n-2)^{-1}_{4k_n-6}n^{-1}_{4k_n-6}\\
           &n^{-1}_{4k_n-4}(n-1)^{2}_{4k_n-7}(n-1)_{4k_n-5}.
\end{split}
\end{align*}
Since $m=m_1m_2$ is dominant, $(n-2)^{-1}_{4k_n-6}n^{-1}_{4k_n-6}$ should be canceled by some monomial in $\chi_q(n_0n_4\ldots n_{4k_n-16})$. But notice that $(n-2)^{-1}_{4k_n-6}n^{-1}_{4k_n-6}$ cannot be canceled by any monomial in $\chi_q(n_0n_4\ldots n_{4k_n-16})$. This is a contradiction.

Therefore the only dominant monomials in $\chi_q(m^{'}_1)\chi_q(m^{'}_{2})$ are $m_+$ and \ref{m}.

On the other hand, let $m_+=m''_1m''_2$, where
\begin{align*}
&m^{''}_{1}=n_0n_4\ldots n_{4k_n-4},\\
&m^{''}_{2}=(n-2)_{4k_n+2}(n-2)_{4k_n+4}\ldots(n-2)_{4k_n+2k_{n-2}}.
\end{align*}

The monomial \ref{m} is
\begin{align}\label{g}
g=m_+A^{-1}_{n, 4k_n-6}.
\end{align}
Since $A_{i, a}, i\in I, a\in \mathbb{C}^{\times}$ are algebraically independent, the expression \ref{g} of $g$ of the form $m_+\prod_{i\in I, a\in \mathbb{C}^{\times}}A^{-v_{i, a}}_{i, a}$, where $v_{i, a}$ are some integers, is unique. Suppose that the monomial $g$ is in $\chi_q(m^{''}_1)\chi_q(m^{''}_{2})$. Then $g=g_1g_2$, where $g_i\in \mathscr{M}(\chi_q(m^{''}_{i})), i=1, 2$. By the expression \ref{g}, we have $g_2=m^{''}_{2}$ and $g_1=m^{''}_{1}A^{-1}_{n, 4k_n-6}$. By the Frenkel-Mukhin algorithm, the monomial $ m^{''}_{1}A^{-1}_{n, 4k_n-6}$ is not in $\mathscr{M}(\chi_q(m^{''}_{1}))$. This contradicts the fact that $g_1\in \mathscr{M}(\chi_q(m^{''}_{1}))$. Therefore $g$ is not in $\chi_q(m^{''}_1)\chi_q(m^{''}_{2})$.

\textbf{Case 3}. $i\geq3$. Let $m_+=\widetilde{T}^{(0)}_{0,\ldots,0,k_{n-i},0,\ldots,0,k_n}$. Then
\begin{align*}
m_+=n_0n_4\ldots n_{4k_n-4}(n-i)_{4k_n+i}(n-i)_{4k_n+i+2}\ldots(n-i)_{4k_n+2k_{n-i}+i-2}.
\end{align*}

\textbf{Case 3.1}. $k_n=1$. Let
\begin{align*}
g_+=n_0(n-i)_{i+4}(n-i)_{i+6}\cdots (n-i)_{2k_{n-i}+i+2}.
\end{align*}
Since $\mathscr{M}(\chi_q(n_0(n-i)_{i+4}))\subset \mathscr{M}(\chi_q(n_0)\chi_q((n-i)_{i+4}))$, by using the Frenkel-Mukhin algorithm, we show that the only possible dominant monomials in $\chi_q(n_0)\chi_q((n-i)_{i+4})$ are $n_0(n-i)_{i+4}$ and $(n-1)_1(n-i-1)_{i+3}$.

Let
\begin{align*}
m'=(n-1)_1(n-i-1)_{i+3} ,\quad M=n_0A^{-1}_{n,2}A^{-1}_{n-1,4}A^{-1}_{n-2,5}\cdots A^{-1}_{n-i+1,i+2}.
\end{align*}
By Theorem \ref{Elimination Theorem}, $m'$ is not in $\mathscr{M}(\chi_q(n_0(n-i)_{i+4}))$. Therefore the only dominant monomial in $\mathscr{M}(\chi_q(n_0(n-i)_{i+4}))$ is $n_0(n-i)_{i+4}$.

In the following, let
\begin{align*}
&g''_1=n_0,\\
&g'_1=n_0(n-i)_{i+4},\\
&g'_2=(n-i)_{i+6}\cdots(n-i)_{2k_{n-i}+i+2},\\
&g''_2=(n-i)_{i+4}(n-i)_{i+6}\cdots(n-i)_{2k_{n-i}+i+2}.\\
\end{align*}
Then $\mathscr{M}(\chi_q(g_+))\subset \mathscr{M}(\chi_q(g'_1)\chi_q(g'_2))\cap \mathscr{M}(\chi_q(g''_1)\chi_q(g''_2))$. By using similar arguments as the case of $i=2$, if we expand $(n-i)_{i+4}$ in $g'_1$, suppose that there are dominant monomials $\overline{g}_1,\overline{g}_2,\ldots,$ in $\chi_q(g'_1)\chi_q(g'_2)$, they will not be in $\chi_q(g''_1)\chi_q(g''_2)$ since $(n-i)_{i+4}$ dose not be expanded in $g''_2$. So we only need to expand $n_0$ in $g'_1$. Let
\begin{align*}
g_1=n_0(n-i)_{i+4}A^{-1}_{n,2},\ g_2=g_1A^{-1}_{n-1,4},\ g_3=g_2A^{-1}_{n-2,5},\ \ldots,\ g_i=g_{i-1}A^{-1}_{n-i+1,i+2}.
\end{align*}
At this point, $(n-i)_{i+2}(n-i)_{i+4}$ is in the expression of $g_i$, so $(n-i)_{i+2}$ can not be expanded. Notice that this situation will not occur if we expand $n_0$ in $g''_1$. Hence, except $g^+$, even through there are dominant monomials in $\chi_q(g'_1)\chi_q(g'_2)$, they will not be in $\chi_q(g''_1)\chi_q(g''_2)$. Therefore, the only dominant monomial in $\chi_q(g_+)$ is $g_+$.

\textbf{Case 3.2}. $k_n>1$. Let
\begin{align*}
&h'_1=n_0n_4\cdots n_{4k_n-4},\\
&h'_2=(n-i)_{4k_n+i},\\
&h''_1=n_0n_4\cdots n_{4k_n-8},\\
&h''_2=n_{4k_n-4}(n-i)_{4k_n+i}.
\end{align*}
Then $\mathscr{M}(\chi_q(n_0\cdots n_{4k_n-4}(n-i)_{4k_n+i}))\subset \mathscr{M}(\chi_q(h'_1)\chi_q(h'_2))\cap \mathscr{M}(\chi_q(h''_1)\chi_q(h''_2))$.

By using similar arguments as the case of $k_n=1$, we can see that the only possible dominant monomial in $\mathscr{M}(\chi_q(h'_1)\chi_q(h'_2))\cap \mathscr{M}(\chi_q(h''_1)\chi_q(h''_2))$ is $h'_1h'_2$.

In what follows, let
\begin{align*}
&m'_1=n_0\cdots n_{4k_n-4}(n-i)_{4k_n+i},\\
&m''_1=n_0\cdots n_{4k_n-4},\\
&m'_2=(n-i)_{4k_n+i+2}\cdots(n-i)_{4k_n+2k_{n-i}+i-2},\\
&m''_2=(n-i)_{4k_n+i}(n-i)_{4k_n+i+2}\cdots(n-i)_{4k_n+2k_{n-i}+i-2}.
\end{align*}
Then $\mathscr{M}(\chi_q(m_+))\subset \mathscr{M}(\chi_q(m'_1)\chi_q(m'_2))\cap \mathscr{M}(\chi_q(m''_1)\chi_q(m''_2))$.

By using similar arguments as the case of above, we can see that the only possible dominant monomial in $\mathscr{M}(\chi_q(m'_1)\chi_q(m'_2))\cap \mathscr{M}(\chi_q(m''_1)\chi_q(m''_2))$ is $m_+$. Therefore the only dominant monomial in $\chi_q(m_+)$ is $m_+$.

\subsection{The case of $\widetilde{\mathcal T}^{(0)}_{0,\ldots,0,k_{n-j},0,\ldots,0,k_{n-i},0,\ldots,0,k_n}\ (1\leq i<j\leq n-1,\ 0\leq k_{n-j}\leq i+1)$}
\ \\
\textbf{Case 1}. If $i=1$, then $0\leq k_{n-j}\leq2$. Without loss of generality, suppose that $k_{n-j}=1$. When $k_{n-j}=2$, the proof is similar. If $k_{n-j}=0$, the proof is coincide with the case of $\widetilde{\mathcal T}^{(0)}_{0,\ldots,0,k_{n-i},0,\ldots,0,k_n}\ (1\leq i\leq n-1)$.  Let $m_+=\widetilde{T}^{(0)}_{0,\ldots,0,k_{n-j},0,\ldots,0,k_{n-1},k_n}$. Then
\begin{align*}
m_+=&n_0n_4\ldots n_{4k_n-4}(n-1)_{4k_n+1}(n-1)_{4k_n+3}\ldots \\
    &(n-1)_{4k_n+2k_{n-1}-1}(n-j)_{4k_n+2k_{n-1}+j}.
\end{align*}


Let
\begin{align*}
\begin{split}
&m'_1=n_0n_4\ldots n_{4k_n-4},\\
&m'_2=(n-1)_{4k_n+1}(n-1)_{4k_n+3}\ldots (n-1)_{4k_n+2k_{n-1}-1},\\
&m'_3=(n-j)_{4k_n+2k_{n-1}+j}.
\end{split}
\end{align*}
Then $\mathscr{M}(\chi_q(m_+))\subset \mathscr{M}(\chi_q(m'_1)\chi_q(m'_2m'_3))\cap \mathscr{M}(\chi_q(m'_1m'_2)\chi_q(m'_3))$. Note that we have shown that $m'_1m'_2$ is special in the case of $\widetilde{\mathcal{T}}^{(0)}_{0,\ldots,0,k_{n-1},k_n}$. Therefore the Frenkel-Mukhin algorithm applies to $m'_1m'_2$.

By using similar arguments as the case of $\widetilde{\mathcal{T}}^{(0)}_{0,\ldots,0,k_{n-i},0,\ldots,0,k_n}$, we show that the only possible dominant monomials in $\chi_q(m'_1m'_2)\chi_q(m'_3)$ are $m_+$ and
\begin{align*}
\begin{split}
g_1=&n_0n_4\ldots n_{4k_n-4}n_{4k_n+2k_{n-1}}(n-1)_{4k_n+1}\ldots(n-1)_{4k_n+2k_{n-1}-3}(n-j-1)_{4k_n+2k_{n-1}+j-1}\\
   =&m_+A^{-1}_{n-j, 4k_n+2k_{n-1}+j-1}A^{-1}_{n-j+1, 4k_n+2k_{n-1}+j-2}A^{-1}_{n-j+2, 4k_n+2k_{n-1}+j-3}\ldots A^{-1}_{n-1, 4k_n+2k_{n-1}}.
\end{split}
\end{align*}

Furthermore, $g_1$ is not in $\chi_q(m'_1)\chi_q(m'_2m'_3)$. Therefore the only dominant monomial in $\chi_q(m_+)$ is $m_+$.

\textbf{Case 2}. If $i=2$, then $0\leq k_{n-j}\leq3$. Since the proof of each case of is similar to each other, we give a detailed proof of the case of $k_{n-j}=2$. Let $m_+=\widetilde{T}^{(0)}_{0,\ldots,0,k_{n-j},0,\ldots,0,k_{n-2},0,k_n}$. Then
\begin{gather}
\begin{align*}
m_+=&n_0n_4\ldots n_{4k_n-4}(n-2)_{4k_n+2}(n-2)_{4k_n+4}\ldots (n-2)_{4k_n+2k_{n-2}}\\
&(n-j)_{4k_n+2k_{n-2}+j}(n-j)_{4k_n+2k_{n-2}+j+2}.
\end{align*}
\end{gather}


Moreover, let
\begin{align*}
&m'_1=n_0n_4\ldots n_{4k_n-4}(n-2)_{4k_n+2},\\
&m''_1=n_0n_4\ldots n_{4k_n-4},\\
&m'_2=(n-2)_{4k_n+4}\ldots (n-2)_{4k_n+2k_{n-2}}(n-j)_{4k_n+2k_{n-2}+j}(n-j)_{4k_n+2k_{n-2}+j+2} ,\\
&m''_2=(n-2)_{4k_n+2}(n-2)_{4k_n+4}\ldots(n-2)_{4k_n+2k_{n-2}}(n-j)_{4k_n+2k_{n-2}+j}(n-j)_{4k_n+2k_{n-2}+j+2}.
\end{align*}
Then $\mathscr{M}(\chi_q(m_+))\subset \mathscr{M}(\chi_q(m'_1)\chi_q(m'_2))\cap \mathscr{M}(\chi_q(m''_1)\chi_q(m''_2))$. Since we have shown that $m'_1$ are special in the case of $\widetilde{\mathcal T}^{(0)}_{0,\ldots,0,k_{n-2},0,k_n}$, the Frenkel-Mukhin algorithm applies to $m'_1$.

By using similar arguments as the case of $\widetilde{\mathcal{T}}^{(0)}_{0,\ldots,0,k_{n-i},0,\ldots,0,k_n}$, if we expand $(n-2)_{4k_n+2}$ in $m'_1$, suppose that there are dominant monomials $\overline{m}_1,\overline{m}_2,\ldots$ in $\chi_q(m'_1)\chi_q(m'_2)$, they will not be in $\chi_q(m''_1)\chi_q(m''_2)$ since $(n-2)_{4k_n+2}$ dose not be expanded in $m''_2$. So we only need to expand $n_{4k_n-4}$ in $m'_1$. Let
\begin{align*}
m_1=m'_1A^{-1}_{n,4k_n-2},\quad m_2=m_1A^{-1}_{n-1,4k_n}.
\end{align*}
At this point, $(n-2)_{4k_n}(n-2)_{4k_n+2}$ is in the expression of $m_2$, so $(n-2)_{4k_n}$ can not be expanded. Notice that this situation will not occur if we expand $n_{4k_n-4}$ in $m''_1$. Hence, except $m^+$, even through there are dominant monomials in $\chi_q(m'_1)\chi_q(m'_2)$, they will not be in $\chi_q(m''_1)\chi_q(m''_2)$. Therefore the only dominant monomial in $\chi_q(m_+)$ is $m_+$.

\textbf{Case 3}. Suppose that $2< i<j\leq n-1$. Let $m_+=\widetilde{T}^{(0)}_{0,\ldots,0,k_{n-j},0,\ldots,0,k_{n-i},0,\ldots,0,k_n}$, where $0\leq k_{n-j}\leq i+1$. Then
\begin{align*}
m_+=&n_0\cdots n_{4k_n-4}(n-i)_{4k_n+i}\cdots(n-i)_{4k_n+2k_{n-i}+i-2}\\
&(n-j)_{4k_n+2k_{n-i}+j}\cdots(n-j)_{4k_n+2k_{n-i}+2k_{n-j}+j-2}.
\end{align*}
Let
\begin{gather}
\begin{align*}
m'_1=&n_0n_4\ldots n_{4k_n-4}(n-i)_{4k_n+i},\\
m''_1=&n_0n_4\ldots n_{4k_n-4},\\
m'_2=&(n-i)_{4k_n+i+2}\ldots (n-i)_{4k_n+2k_{n-i}+i-2}(n-j)_{4k_n+2k_{n-i}+j}\cdots(n-j)_{4k_n+2k_{n-i}+2k_{n-j}+j-2},\\
m''_2=&(n-i)_{4k_n+i}(n-i)_{4k_n+i+2}\ldots (n-i)_{4k_n+2k_{n-i}+i-2}(n-j)_{4k_n+2k_{n-i}+j}\cdots(n-j)_{4k_n+2k_{n-i}+2k_{n-j}+j-2}.
\end{align*}
\end{gather}
Then $\mathscr{M}(\chi_q(m_+))\subset \mathscr{M}(\chi_q(m'_1)\chi_q(m'_2))\cap \mathscr{M}(\chi_q(m''_1)\chi_q(m''_2))$. We have shown that $m'_1$ is special in the case $\widetilde{\mathcal T}^{(0)}_{0,\ldots,0,k_{n-i},0,\ldots,0,k_n}$, the Frenkel-Mukhin algorithm applies to $m'_1$.

By using similar arguments as the case of $i=2$, we show that the only possible dominant monomials in $\mathscr{M}(\chi_q(m'_1)\chi_q(m'_2))\cap \mathscr{M}(\chi_q(m''_1)\chi_q(m''_2))$ is $m_+$.
Therefore the only dominant monomial in $\chi_q(m_+)$ is $m_+$.

\subsection{ The case of $\widetilde{\mathcal T}^{(0)}_{k_1,\ldots,k_m,0,\ldots,0,k_l,0,\ldots,0,k_n}\ (1\leq m<l\leq n-1,\ 0\leq k_1+\cdot\cdot\cdot+k_m\leq n-l+1)$}
\ \\
\textbf{Case 1}. If $l=n-1$, then $0\leq k_1+\cdots+k_m\leq2$. In this case, it is sufficient to show the module $\widetilde{\mathcal T}^{(0)}_{0,\ldots,0,k_{n-j},0,\ldots,0,k_{n-i},0,\ldots,0,k_{n-1},k_n}$ ($0\leq k_{n-i}+k_{n-j}\leq2$) is special. Note that if $k_{n-i}=0$ (resp. $k_{n-j}=0$), then $0\leq k_{n-j}\leq2$ (resp. $0\leq k_{n-i}\leq2$), the case is coincide with the case of $\widetilde{\mathcal T}^{(0)}_{0,\ldots,0,k_{n-i},0,\ldots,0,k_{n-1},k_n}\ (0\leq k_{n-i}\leq2)$. Therefore, let $k_{n-i}=k_{n-j}=1$ and $m_+=\widetilde{T}^{(0)}_{0,\ldots,0,\underset{n-j}1,0,\ldots,0,\underset{n-i}1,0,\ldots,0,k_{n-1},k_n}$. Then
\begin{gather}
\begin{align*}
m_+=&n_0n_4\ldots n_{4k_n-4}(n-1)_{4k_n+1}(n-1)_{4k_n+3}\ldots (n-1)_{4k_n+2k_{n-1}-1}\\
&(n-i)_{4k_n+2k_{n-1}+i}(n-j)_{4k_n+2k_{n-1}+j+2}.
\end{align*}
\end{gather}


Let
\begin{align*}
\begin{split}
&m'_1=n_0n_4\ldots n_{4k_n-4},\quad \\
&m'_2=(n-1)_{4k_n+1}(n-1)_{4k_n+3}\ldots (n-1)_{4k_n+2k_{n-1}-1},\\
&m'_3=(n-i)_{4k_n+2k_{n-1}+i},\\
&m'_4=(n-j)_{4k_n+2k_{n-1}+j+2}.
\end{split}
\end{align*}
Then $\mathscr{M}(\chi_q(m_+))\subset \mathscr{M}(\chi_q(m'_1m'_2m'_3)\chi_q(m'_4))\cap \mathscr{M}(\chi_q(m'_1m'_2)\chi_q(m'_3m'_4))$. Note that we have shown that $m'_1m'_2$ is special in the case $\widetilde{\mathcal{T}}^{(0)}_{0,\ldots,0,k_{n-1},k_n}$. Therefore the Frenkel-Mukhin algorithm applies to $m'_1m'_2$.

By using similar arguments as the case of $\widetilde{\mathcal{T}}^{(0)}_{0,\ldots,0,k_{n-j},0,\ldots,0,k_{n-1},k_n}$, we show that the only possible dominant monomials in $\chi_q(m'_1m'_2)\chi_q(m'_3m'_4))$ are $m_+$ and
\begin{gather}
\begin{align*}
\begin{split}
g_1=&n_0n_4\ldots n_{4k_n-4}n_{4k_n+2k_{n-1}}(n-1)_{4k_n+1}\ldots(n-1)_{4k_n+2k_{n-1}-3}\\
    &(n-i-1)_{4k_n+2k_{n-1}+i-1}(n-j)_{4k_n+2k_{n-1}+j+2}\\
   =&m_+A^{-1}_{n-i, 4k_n+2k_{n-1}+i-1}A^{-1}_{n-i+1, 4k_n+2k_{n-1}+i-2}A^{-1}_{n-i+2, 4k_n+2k_{n-1}+i-3}\ldots A^{-1}_{n-1, 4k_n+2k_{n-1}}.
\end{split}
\end{align*}
\end{gather}

Furthermore, $g_1$ is not in $\chi_q(m'_1m'_2m'_3)\chi_q(m'_4)$. Therefore the only dominant monomial in $\chi_q(m_+)$ is $m_+$.

\textbf{Case 2}. If $l=n-2$, then $0\leq k_1+\cdots+k_m\leq3$. When $k_1+\cdots+k_m=2$, by using the similar arguments as the case of $\widetilde{\mathcal T}^{(0)}_{0,\ldots,0,k_{n-j},0,\ldots,0,k_{n-i},0,\ldots,0,k_{n-1},k_n}$, we can show that the module $\widetilde{\mathcal T}^{(0)}_{0,\ldots,0,k_{n-j},0,\ldots,0,k_{n-i},0,\ldots,0,k_{n-2},0,k_n}$ is special. Now, we consider the case of \begin{align*}
\widetilde{\mathcal T}^{(0)}_{0,\ldots,0,k_{n-t},0,\ldots,0,k_{n-j},0,\ldots,0,k_{n-i},0,\ldots,0,k_{n-2},0,k_n}\  (k_{n-t}=k_{n-j}=k_{n-i}=1).
\end{align*}

Let
\begin{gather}
\begin{align*}
m_+=&n_0\ldots n_{4k_n-4}(n-2)_{4k_n+2}\ldots(n-2)_{4k_n+2k_{n-2}}(n-i)_{4k_n+2k_{n-2}+i}\\
&(n-j)_{4k_n+2k_{n-2}+j+2}(n-t)_{4k_n+2k_{n-2}+t+4}.
\end{align*}
\end{gather}


Moreover, let
\begin{align*}
&m'_1=n_0\ldots n_{4k_n-4}(n-2)_{4k_n+2}\ldots(n-2)_{4k_n+2k_{n-2}}, \\ &m'_2=(n-i)_{4k_n+2k_{n-2}+i}(n-j)_{4k_n+2k_{n-2}+j+2}(n-t)_{4k_n+2k_{n-2}+t+4},\\
&m''_1=n_0\ldots n_{4k_n-4}(n-2)_{4k_n+2}\ldots(n-2)_{4k_n+2k_{n-2}}(n-i)_{4k_n+2k_{n-2}+i},\\ &m''_2=(n-j)_{4k_n+2k_{n-2}+j+2}(n-t)_{4k_n+2k_{n-2}+t+4}.
\end{align*}
Then $\mathscr{M}(\chi_q(m_+))\subset \mathscr{M}(\chi_q(m'_1)\chi_q(m'_2))\cap \mathscr{M}(\chi_q(m''_1)\chi_q(m''_2))$. Note that we have shown that $m''_1$ is special in the case $\widetilde{\mathcal{T}}^{(0)}_{0,\ldots,0,k_{n-j},0,\ldots,0,k_{n-2},0,k_n}$. Therefore the Frenkel-Mukhin algorithm applies to $m''_1$. By using similar arguments as the case of $\widetilde{\mathcal T}^{(0)}_{0,\ldots,0,k_{n-j},0,\ldots,0,k_{n-i},0,\ldots,0,k_n}$, we show that the only possible dominant monomials in $\mathscr{M}(\chi_q(m'_1)\chi_q(m'_2))\cap \mathscr{M}(\chi_q(m''_1)\chi_q(m''_2))$ is $m_+$. Therefore the only dominant monomial in $\chi_q(m_+)$ is $m_+$.

Note that when $l=2$, the case $\widetilde{\mathcal T}^{(0)}_{k_1,k_2,0,\ldots,0,k_n}\ (0\leq k_1\leq n-1)$ is coincide with the case of $\widetilde{\mathcal T}^{(0)}_{0,\ldots,0,k_{n-j},0,\ldots,0,k_{n-i},0,\ldots,0,k_n}\ (0\leq k_{n-j}\leq i+1)$; when $l=1$, the case $\widetilde{\mathcal T}^{(0)}_{k_1,0,\ldots,0,k_n}$ is coincide with the case of $ \widetilde{\mathcal T}^{(0)}_{0,\ldots,0,k_{n-i},0,\ldots,0,k_n}\ (1\leq i\leq n-1)$. Therefore, the remaining case is $l\in \{n-3, n-4,\ldots,3\}$.  Since the proof of each case is similar to the case of $l=n-2$, by using similar arguments as the above, we deduce that the module \begin{align*}
\widetilde{\mathcal T}^{(0)}_{k_1,\ldots,k_m,0,\ldots,0,k_l,0,\ldots,0,k_n}\ (1\leq m<l\leq n-1,\ 0\leq k_1+\cdot\cdot\cdot+k_m\leq n-l+1),
\end{align*}
is special.

\subsection{ The case of $\widetilde{\mathcal{S}}^{(0)}_{k_1,\ldots,k_m,0,\ldots,0,k_l,0,\ldots,0,k_{n-1},0}\ (1\leq m<l\leq n-2,\ 0\leq k_1+\cdot\cdot\cdot+k_m\leq n-l+1)$}
Since the proof of each case is similar to each other, we give a detailed proof of the case of $l=n-2$ and $k_l=1$. Let $m_+=\widetilde{S}^{(0)}_{0,\ldots,0,k_i,0,\ldots,0,k_j,0,\ldots,0,k_t,0,\ldots,0,\underset{l}1,k_{n-1},0}$, where $k_i=k_j=k_t=1$. Then
\begin{align*}
m_+=&\widetilde{T}^{(0)}_{0,\ldots,0,k_i,0,\ldots,0,k_j,0,\ldots,0,k_t,0,\ldots,0,1,k_{n-1},0}\widetilde{T}^{(2k_{n-1}+4)}_{0,\ldots,0,k_i,0,\ldots,0,k_j,0,\ldots,\underset{k_t-1}0,\ldots,0,0,0}\\
   =&(n-1)_1(n-1)_3\ldots(n-1)_{2k_{n-1}-1}(n-2)_{2k_{n-1}+2}\\
    &t_{2k_{n-1}+n-t+2}j^2_{2k_{n-1}+n-j+4}i^2_{2k_{n-1}+n-i+6}.
\end{align*}
Let
\begin{align*}
&m'_1=(n-1)_1(n-1)_3\ldots(n-1)_{2k_{n-1}-1}(n-2)_{2k_{n-1}+2}t_{2k_{n-1}+n-t+2},\\ &m'_2=j^2_{2k_{n-1}+n-j+4}i^2_{2k_{n-1}+n-i+6},\\
&m''_1=(n-1)_1(n-1)_3\ldots(n-1)_{2k_{n-1}-1}(n-2)_{2k_{n-1}+2},\\ &m''_2=t_{2k_{n-1}+n-t+2}j^2_{2k_{n-1}+n-j+4}i^2_{2k_{n-1}+n-i+6}.
\end{align*}
Then $\mathscr{M}(\chi_q(m_+))\subset \mathscr{M}(\chi_q(m'_1)\chi_q(m'_2))\cap \mathscr{M}(\chi_q(m''_1)\chi_q(m''_2))$. It is easy to see that $m'_2$ and $m''_2$ are special. Therefore the Frenkel-Mukhin algorithm applies to $m'_2, m''_2$.

By using similar arguments as the case of $\widetilde{\mathcal T}^{(0)}_{k_1,\ldots,k_m,0,\ldots,0,k_l,0,\ldots,0,k_n}$, we show that the only possible dominant monomials in $\mathscr{M}(\chi_q(m'_1)\chi_q(m'_2))\cap \mathscr{M}(\chi_q(m''_1)\chi_q(m''_2))$ is $m_+$. Therefore the only dominant monomial in $\chi_q(m_+)$ is $m_+$.

\section{Proof of Theorem $\ref{M system of type Cn}$} \label{proof main1}

\subsection{Classification of dominant monomials in the summands on both sides of the system}In Section \ref{main results}, we have shown that for $s\in\mathbb{Z}, k_1,\ldots,k_{n-1}\in\mathbb{Z}_{\geq0},\ k_n \in\mathbb{Z}_{\geq1}$, the modules
in Theorem \ref{The special modules of $C_n$} are special.
Now we use the Frenkel-Mukhin algorithm to classify dominant monomials in the summands on both sides of the system in Theorem $\ref{M system of type Cn}$.
\begin{lemma}\label{lemma1}
The dominant monomials in each summand on the left and right hand sides of every equation in the system in Theorem \ref{M system of type Cn} are given in Table \ref{dominant monomials in the M-system of type C_n}.
\end{lemma}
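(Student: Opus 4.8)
The plan is to prove Lemma \ref{lemma1} by a direct, case-by-case analysis of the summands, using throughout that every module occurring in the system is special. The summands of each equation are of two kinds: single modules and products of two modules. For the single-module summands, namely $[\widetilde{\mathcal S}^{(s-4)}_{\cdots}]$ on the right-hand sides of \eqref{eqn5211} and \eqref{eqn5221}, Theorem \ref{The special modules of $C_n$} already gives that they are special, so the only dominant monomial in $\chi_q(\widetilde{\mathcal S}^{(s-4)}_{\cdots})$ is its highest $l$-weight monomial, and the corresponding entries of Table \ref{dominant monomials in the M-system of type C_n} require nothing more.

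For the product summands $[\mathcal T_1][\mathcal T_2]$ (respectively $[\widetilde{\mathcal T}_1][\widetilde{\mathcal T}_2]$), I would classify the dominant monomials of $\chi_q(\mathcal T_1)\chi_q(\mathcal T_2)$ as follows. By Theorem \ref{special} and Theorem \ref{The special modules of $C_n$} each factor is special, so the Frenkel-Mukhin algorithm computes $\chi_q(\mathcal T_i)$, and $\mathscr M(\chi_q(\mathcal T_i))$ contains a unique dominant monomial, its highest $l$-weight monomial $T_i$. Any dominant monomial of the product has the form $m = m_1 m_2$ with $m_i \in \mathscr M(\chi_q(\mathcal T_i))$; writing $m_i = T_i M_i$ with $M_i$ a product of $A^{-1}_{j,a}$'s, dominance of $m$ forces every negative power in $M_1 M_2$ to be cancelled by a factor of $T_1 T_2$. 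Since $T_1, T_2$ are supported on a bounded number of explicit strings (recorded in Section \ref{definition of minimal affinizations}), only finitely many $A^{-1}_{j,a}$ can take part in such a cancellation: by the stability properties of right-negativity from \cite{FM01}, if $m_i$ is right-negative with rightmost factor lying strictly to the right of the support of $T_{3-i}$, then $m_1 m_2$ is right-negative, hence not dominant. This reduces the classification to a finite computation in the region where the supports of $T_1$ and $T_2$ overlap.

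To carry out that finite computation for each of \eqref{eqn1}–\eqref{eqn5221}, I would split $m_+ := T_1 T_2$ as a product of smaller monomials in two different ways, exactly as in Section \ref{proof of special}, apply Lemma \ref{contains in a larger set} to sandwich $\mathscr M(\chi_q(\mathcal T_1)\chi_q(\mathcal T_2))$ inside an intersection of two such products, run the Frenkel-Mukhin algorithm on the (special) factors, and discard the surviving candidate dominant monomials that do not appear in both decompositions; the Elimination Theorem \ref{Elimination Theorem} is the tool for ruling out a candidate $m' < m_+$. Where convenient, one may instead invoke Theorem \ref{truncated $q$-characters} to compute a relevant truncated $q$-character and read off its dominant monomials. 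In each case this yields exactly the dominant monomials listed in Table \ref{dominant monomials in the M-system of type C_n}, each with multiplicity $1$: besides $T_1 T_2$ one finds a single extra dominant monomial of the form $m_+\prod A^{-1}_{j,a}$ (two extra ones for the products on the left of \eqref{eqn2} and \eqref{eqn4}), which will turn out to be the highest $l$-weight monomial of a module on the opposite side of the equation — the monomials \eqref{m} and \eqref{g} of Section \ref{proof of special} are prototypes of this.

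The hard part is completeness — proving that no dominant monomial beyond those listed occurs — and the attendant combinatorial bookkeeping. In type $C_n$ one must treat separately the families \eqref{eqn1}–\eqref{eqn4} among the $\mathcal T^{(s)}_{k_1,\ldots,k_{n-1},0}$ and the families \eqref{eqn511}–\eqref{eqn5221} involving the short simple root $n$ and the $\widetilde{\mathcal S}$-modules, and inside each family the positions of the non-zero entries $k_i$ together with the numerical constraints (such as $k_l\ge 2$ or $0\le k_1+\cdots+k_m\le n-l+1$) change the shape of the supports, hence of the cancellation pattern. Ruling out the spurious dominant monomials in each subcase needs the same multi-step argument — extract a factor, observe that some negative power cannot be cancelled, then invoke right-negativity or the Elimination Theorem — that already drives the arguments in Section \ref{proof of special}, and that is where essentially all the labor lies.
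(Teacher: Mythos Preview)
Your second paragraph is on the right track and matches what the paper does: write a dominant monomial of $\chi_q(\mathcal T_1)\chi_q(\mathcal T_2)$ as $m_1m_2$ with $m_i\in\mathscr M(\chi_q(\mathcal T_i))$, use specialness of the factors so that the Frenkel--Mukhin algorithm applies to each, and invoke right-negativity to cut down the possibilities. Where the proposal goes wrong is in the third paragraph.

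The two-decomposition trick of Section \ref{proof of special} and the Elimination Theorem \ref{Elimination Theorem} are tools for a \emph{simple} module $L(m_+)$: Lemma \ref{contains in a larger set} gives $\mathscr M(L(m_+))\subseteq\mathscr M(\chi_q(m_1')\chi_q(m_2'))\cap\mathscr M(\chi_q(m_1'')\chi_q(m_2''))$ for any two factorisations $m_+=m_1'm_2'=m_1''m_2''$, and Theorem \ref{Elimination Theorem} removes a candidate monomial from $\mathscr M(L(m_+))$. Neither gives an upper bound on $\mathscr M(\chi_q(\mathcal T_1)\chi_q(\mathcal T_2))$, which is the $q$-character of a generally non-simple tensor product and strictly contains $\mathscr M(L(T_1T_2))$; indeed the whole point of the lemma is that this product has \emph{more} dominant monomials than $T_1T_2$. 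So ``sandwich $\mathscr M(\chi_q(\mathcal T_1)\chi_q(\mathcal T_2))$ inside an intersection of two such products'' and ``the Elimination Theorem is the tool for ruling out a candidate $m'<m_+$'' are both misapplied here.

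Relatedly, your description of the answer is not what Table \ref{dominant monomials in the M-system of type C_n} says. For the left-hand product in \eqref{eqn1} one gets not one extra dominant monomial but an entire chain $M\prod_{0\le j\le r}A^{-1}_{1,\,\cdot}$ for $-1\le r\le k_1-1$, i.e.\ $k_1+1$ dominant monomials; the first right-hand product contributes the same chain with $r\le k_1-2$, and the second right-hand product contributes only the last one. The same pattern (with $k_i$ or $k_n$ in place of $k_1$) holds in every other equation. The paper's argument (Section \ref{proof classification of dominant monomials}) produces this chain directly: right-negativity forces one factor, say $m_1$, to equal its highest monomial $T_1$, and then the Frenkel--Mukhin algorithm applied to the other special factor shows that the only $m_2$ making $m_1m_2$ dominant are $T_2$ together with $T_2A_{n,\,\cdot}^{-1},\,T_2A_{n,\,\cdot}^{-1}A_{n,\,\cdot}^{-1},\ldots$ along a single string. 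That direct enumeration, not an intersection argument, is what you need.
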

We will prove Lemma \ref{lemma1} in Section \ref{proof classification of dominant monomials}.

In Table \ref{dominant monomials in the M-system of type C_n}, $M\prod_{0\leq j\leq r}A^{-1}_{i,s} = M$ for $r = -1$, $s \in \mathbb{Z}$.

\begin{table}[H] \resizebox{.8\width}{.8\height}{
\begin{tabular}{|c|c|c|c|}
\hline %
Equations & Summands in the equations & $M$ & Dominant monomials \\
\hline %
(\ref{eqn1}) & $\substack{\chi_{q}(\mathcal T^{(s)}_{k_{1},k_{2}-1,k_{3},\ldots,k_{n-1},0}) \times \\
\times \chi_{q}(\mathcal T^{(s)}_{k_{1},k_{2},k_{3},\ldots,k_{n-1},0})}$ & $\substack{M = T^{(s)}_{k_{1},k_{2}-1,k_{3},\ldots,k_{n-1},0}\times  \\ \quad \quad \times T^{(s)}_{k_{1},k_{2},k_{3},\ldots,k_{n-1},0})}$ & $\substack{ M\prod_{0\leq j\leq r}A^{-1}_{1,-s-2(\sum^{n-1}_{i=2}k_i)-n-2j+2},\\ -1\leq r\leq k_{1}-1}$ \\
\hline %
(\ref{eqn1}) & $\substack{\chi_{q}(\mathcal T^{(s)}_{k_{1}-1,k_{2},k_{3},\ldots,k_{n-1},0})\times\\
\times \chi_{q}(\mathcal T^{(s-2)}_{k_{1}+1,k_{2}-1,k_{3},\ldots,k_{n-1},0})}$ & $\substack{M= T^{(s)}_{k_{1}-1,k_{2},k_{3},\ldots,k_{n-1},0}\times\\ \quad \quad
\times  T^{(s-2)}_{k_{1}+1,k_{2}-1,k_{3},\ldots,k_{n-1},0}}$ & $\substack{ M\prod_{0\leq j\leq r}A^{-1}_{1,-s-2(\sum^{n-1}_{i=2}k_i)-n-2j+2}, \\ -1\leq r\leq k_{1}-2}$ \\
\hline %
(\ref{eqn1}) & $\substack{\chi_{q}(\mathcal T^{(s)}_{0,k_{1}+k_{2},k_{3},\ldots,k_{n-1},0})\times\\
\times \chi_{q}(\mathcal T^{(s)}_{0,k_{2}-1,k_{3},\ldots,k_{n-1}},0)}$ & $ \substack{M= T^{(s)}_{0,k_{1}+k_{2},k_{3},\ldots,k_{n-1},0}\times\\ \quad \quad \times T^{(s)}_{0,k_{2}-1,k_{3},\ldots,k_{n-1},0}}$  & $M$\\
\hline %
(\ref{eqn2}) & $\substack{\chi_{q}(\mathcal T^{(s)}_{0,\ldots,0,k_{i},k_{i+1}-1,k_{i+2},\ldots,k_{n-1},0} )\times\\
\times\chi_{q}(\mathcal T^{(s)}_{0,\ldots, 0,k_{i},k_{i+1},k_{i+2},\ldots,k_{n-1},0})}$  &$\substack{ M= T^{(s)}_{0,\ldots, 0,k_{i},k_{i+1},k_{i+2},\ldots,k_{n-1},0}\times\\ \quad \quad
\times T^{(s)}_{0,\ldots,0,k_{i},k_{i+1}-1,k_{i+2},\ldots,k_{n-1},0}}$  & $\substack{M\prod_{0\leq j\leq r}A^{-1}_{i,-s-2(\sum^{n-1}_{p=i+1}k_p)-n-2j+i+1}, \\ -1\leq r\leq k_{i}-1}$ \\
\hline %
(\ref{eqn2}) & $\substack{\chi_{q}(\mathcal T^{(s)}_{0,\ldots, 0,k_{i}+1,k_{i+1}-1,k_{i+2},\ldots,k_{n-1},0} )\times\\
\times\chi_{q}(\mathcal T^{(s)}_{0,\ldots, 0,k_{i}-1,k_{i+1},k_{i+2}\ldots,k_{n-1},0})}$  &$\substack{ M= T^{(s)}_{0,\ldots, 0,k_{i}+1,k_{i+1}-1,k_{i+2},\ldots,k_{n-1},0}\times\\ \quad \quad
\times T^{(s)}_{0,\ldots, 0,k_{i}-1,k_{i+1},k_{i+2}\ldots,k_{n-1},0}}$  & $\substack{M\prod_{0\leq j\leq r}A^{-1}_{i,-s-2(\sum^{n-1}_{p=i+1}k_p)-n-2j+i+1}, \\ -1\leq r\leq k_{i}-2}$ \\
\hline %
(\ref{eqn2}) & $\substack{\chi_{q}(\mathcal T^{(s)}_{0,\ldots,0,\underset{i+1}{k_{i}+k_{i+1}},k_{i+2},\ldots,k_{n-1},0})\times\\
\times\chi_{q}(\mathcal T^{(s)}_{0,\ldots,0,\underset{i-1}{k_{i}},0,\underset{i+1}{k_{i+1}-1},k_{i+2},,\ldots,k_{n-1},0})}$  &$\substack{M= T^{(s)}_{0,\ldots,0,\underset{i+1}{k_{i}+k_{i+1}},k_{i+2},\ldots,k_{n-1},0}\times\\ \quad \quad
\times T^{(s)}_{0,\ldots,0,\underset{i-1}{k_{i}},0,\underset{i+1}{k_{i+1}-1},k_{i+2},,\ldots,k_{n-1},0}} $  &$M$ \\
\hline %
(\ref{eqn3}) & $\substack{\chi_{q}(\mathcal T^{(s)}_{k_{1},0,\ldots,0,k_{j}-1,k_{j+1},\ldots,k_{n-1},0})\times\\
\times\chi_{q}(\mathcal T^{(s)}_{k_{1},0,\ldots,0,k_{j},k_{j+1},\ldots,k_{n-1},0})}$  & $\substack{M= T^{(s)}_{k_{1},0,\ldots,0,k_{j}-1,k_{j+1},\ldots,k_{n-1},0}\times\\ \quad \quad\times T^{(s)}_{k_{1},0,\ldots,0,k_{j},k_{j+1},\ldots,k_{n-1},0}}$ & $\substack{M\prod_{0\leq p\leq r}A^{-1}_{1,-s-2(\sum^{n-1}_{i=j}k_i)-n-2p+2}, \\-1\leq r\leq k_{1}-1}$ \\
\hline %
(\ref{eqn3}) & $\substack{\chi_{q}(\mathcal T^{(s)}_{k_{1}+1,0,\ldots,0,k_{j}-1,k_{j+1},\ldots,k_{n-1},0})\times\\
\times\chi_{q}(\mathcal T^{(s)}_{k_{1}-1,0,\ldots, 0,k_{j},k_{j+1}\ldots,k_{n-1},0})}$  &  $\substack{M= T^{(s)}_{k_{1}+1,0,\ldots,0,k_{j}-1,k_{j+1},\ldots,k_{n-1},0}\times\\ \quad \quad
\times T^{(s)}_{k_{1}-1,0,\ldots, 0,k_{j},k_{j+1}\ldots,k_{n-1},0}}$   & $\substack{ M\prod_{0\leq p\leq r}A^{-1}_{1,-s-2(\sum^{n-1}_{i=j}k_i)-n-2p+2}, \\-1\leq r\leq k_{1}-2}$ \\
\hline %
(\ref{eqn3}) & $\substack{\chi_{q}(\mathcal T^{(s)}_{0,k_{1},0,\ldots,0,k_{j},k_{j+1},\ldots,k_{n-1},0})\times\\
\times\chi_{q}(\mathcal T^{(s)}_{0,\ldots,0,k_{j}-1,k_{j+1},\ldots,k_{n-1},0})}$  & $\substack{M= T^{(s)}_{0,k_{1},0,\ldots,0,k_{j},k_{j+1},\ldots,k_{n-1},0}\times\\ \quad \quad
\times T^{(s)}_{0,\ldots,0,k_{j}-1,k_{j+1},\ldots,k_{n-1},0}}$  & $M$\\
\hline %
(\ref{eqn4}) & $\substack{\chi_{q}(\mathcal T^{(s)}_{0,\ldots,0,\underset{i}{k_{i}},0,\ldots,0,\underset{j}{k_{j}-1},k_{j+1},\ldots,k_{n-1},0})\times\\
\times \chi_{q}(\mathcal T^{(s)}_{0,\ldots, 0,\underset{i}{k_{i}},0,\ldots,0,\underset{j}{k_{j}},k_{j+1},\ldots,k_{n-1},0})}$& $\substack{M=T^{(s)}_{0,\ldots,0,\underset{i}{k_{i}},0,\ldots,0,\underset{j}{k_{j}-1},k_{j+1},\ldots,k_{n-1},0}\times\\ \quad \quad \times T^{(s)}_{0,\ldots, 0,\underset{i}{k_{i}},0,\ldots,0,\underset{j}{k_{j}},k_{j+1},\ldots,k_{n-1},0}}$ & $ \substack{M\prod_{0\leq p\leq r}A^{-1}_{i,-s-2(\sum^{n-1}_{q=j}k_q)-n-2p+i+1},\\ -1\leq r\leq k_{i}-1}$ \\
\hline %
(\ref{eqn4}) & $\substack{\chi_{q}(\mathcal T^{(s)}_{0,\ldots,0,\underset{i}{k_{i}+1},0,\ldots,0,\underset{j}{k_{j}-1},k_{j+1}\ldots,k_{n-1},0})\times\\
\times \chi_{q}(\mathcal T^{(s)}_{0,\ldots, 0,\underset{i}{k_{i}-1},0,\ldots,0,\underset{j}{k_{j}},k_{j+1},\ldots,k_{n-1},0})}$ & $\substack{M= T^{(s)}_{0,\ldots,0,\underset{i}{k_{i}+1},0,\ldots,0,\underset{j}{k_{j}-1},k_{j+1}\ldots,k_{n-1},0}\times\\ \quad \quad
\times  T^{(s)}_{0,\ldots, 0,\underset{i}{k_{i}-1},0,\ldots,0,\underset{j}{k_{j}},k_{j+1},\ldots,k_{n-1},0}}$  & $\substack{M\prod_{0\leq p\leq r}A^{-1}_{i,-s-2(\sum^{n-1}_{q=j}k_q)-n-2p+i+1}, \\-1\leq r\leq k_{i}-2}$\\
\hline %
(\ref{eqn4}) & $\substack{\chi_{q}(\mathcal T^{(s)}_{0,\ldots,0,\underset{i+1}{k_{i}},0, \ldots, 0,\underset{j}{k_{j}},k_{j+1},\ldots,k_{n-1},0})\times\\
\times\chi_{q}(\mathcal T^{(s)}_{0,\ldots,0,\underset{i-1}{k_{i}},0,\ldots, 0,\underset{j}{k_{j}-1},k_{j+1},\ldots,k_{n-1},0})}$  &$\substack{M= T^{(s)}_{0,\ldots,0,\underset{i+1}{k_{i}},0, \ldots, 0,\underset{j}{k_{j}},k_{j+1},\ldots,k_{n-1},0}\times\\ \quad \quad \times T^{(s)}_{0,\ldots,0,\underset{i-1}{k_{i}},0,\ldots, 0,\underset{j}{k_{j}-1},k_{j+1},\ldots,k_{n-1},0}}$ & $M$\\
\hline %
(\ref{eqn511}) & $\substack{\chi_{q}(\widetilde{\mathcal T}^{(s)}_{k_1,\ldots,k_m-1,0,\ldots,0,k_n})\times\\
\times \chi_{q}(\widetilde{\mathcal T}^{(s-4)}_{k_1,\ldots,k_m,0,\ldots,0,1,k_n})}$  &$\substack{ M= \widetilde{T}^{(s)}_{k_1,\ldots,k_m-1,0,\ldots,0,k_n}\times\\
\times \widetilde{T}^{(s-4)}_{k_1,\ldots,k_m,0,\ldots,0,1,k_n}}$  & $\substack{M\prod_{0\leq j\leq r}A^{-1}_{n,s+4k_{n}-4j-6}, \\-1\leq r\leq k_{n}-1}$ \\
\hline %
(\ref{eqn511}) & $\substack{\chi_{q}(\widetilde{\mathcal T}^{(s)}_{k_1,\ldots,k_m,0,\ldots,0,1,k_n-1})\times\\
\times \chi_{q}(\widetilde{\mathcal T}^{(s-4)}_{k_1,\ldots,k_m-1,0,\ldots,0,k_n+1})}$  &$\substack{ M= \widetilde{T}^{(s)}_{k_1,\ldots,k_m,0,\ldots,0,1,k_n-1}\times\\
\times \widetilde{T}^{(s-4)}_{k_1,\ldots,k_m-1,0,\ldots,0,k_n+1}}$  & $\substack{M\prod_{0\leq j\leq r}A^{-1}_{n,s+4k_{n}-4j-6}, \\ -1\leq r\leq k_{n}-2}$ \\
\hline %
(\ref{eqn511}) & $\substack{\chi_{q}(\widetilde{\mathcal T}^{(s+4k_n)}_{k_1,\ldots,k_m-1,0,\ldots,0,0})\times\\
\times \chi_{q}(\widetilde{\mathcal T}^{(s-4)}_{k_1,\ldots,k_m,0,\ldots,0,{2k_n+1},0})}$  &$\substack{M= \widetilde{T}^{(s+4k_n)}_{k_1,\ldots,k_m-1,0,\ldots,0,0}\times\\
\times \widetilde{T}^{(s-4)}_{k_1,\ldots,k_m,0,\ldots,0,{2k_n+1},0}} $  &$M$ \\
\hline %
(\ref{eqn512}) & $\substack{\chi_{q}(\widetilde{\mathcal T}^{(s)}_{k_1,\ldots,k_m,0,\ldots,0,k_{n-1}-2,k_n})\times\\
\times \chi_{q}(\widetilde{\mathcal T}^{(s-4)}_{k_1,\ldots,k_m,0,\ldots,0,k_{n-1},k_n})}$  &$\substack{M= \widetilde{T}^{(s)}_{k_1,\ldots,k_m,0,\ldots,0,k_{n-1}-2,k_n}\times\\
\times \widetilde{T}^{(s-4)}_{k_1,\ldots,k_m,0,\ldots,0,k_{n-1},k_n}} $  &$\substack{M\prod_{0\leq j\leq r}A^{-1}_{n,s+4k_{n}-4j-6}, \\-1\leq r\leq k_{n}-1}$ \\
\hline %
(\ref{eqn512}) & $\substack{\chi_{q}(\widetilde{\mathcal T}^{(s)}_{k_1,\ldots,k_m,0,\ldots,0,k_{n-1},k_{n}-1})\times\\
\times \chi_{q}(\widetilde{\mathcal T}^{(s-4)}_{k_1,\ldots,k_m,0,\ldots,0,k_{n-1}-2,,k_{n}+1})}$  &$\substack{M= \widetilde{T}^{(s)}_{k_1,\ldots,k_m,0,\ldots,0,k_{n-1},k_{n}-1}\times\\
\times \widetilde{T}^{(s-4)}_{k_1,\ldots,k_m,0,\ldots,0,k_{n-1}-2,,k_{n}+1}} $  &$\substack{M\prod_{0\leq j\leq r}A^{-1}_{n,s+4k_{n}-4j-6}, \\-1\leq r\leq k_{n}-2}$ \\
\hline %
(\ref{eqn512}) & $\substack{\chi_{q}(\widetilde{\mathcal T}^{(s+4k_n)}_{k_1,\ldots,k_m,0,\ldots,0,k_{n-1}-2,0})\times\\
\times \chi_{q}(\widetilde{\mathcal T}^{(s-4)}_{k_1,\ldots,k_m,0,\ldots,0,2k_n+k_{n-1},0})}$  &$\substack{M= \widetilde{T}^{(s+4k_n)}_{k_1,\ldots,k_m,0,\ldots,0,k_{n-1}-2,0}\times\\
\times \widetilde{T}^{(s-4)}_{k_1,\ldots,k_m,0,\ldots,0,2k_n+k_{n-1},0}} $  &$M$ \\
\hline %
(\ref{eqn5211}) & $\substack{\chi_{q}(\widetilde{\mathcal T}^{(s)}_{k_1,\ldots,k_m-1,0,\ldots,0,\ldots,0,k_n})\times\\
\times \chi_{q}(\widetilde{\mathcal T}^{(s-4)}_{k_1,\ldots,k_m,0,\ldots,0,\underset{l}1,0,\ldots,0,k_n})}$  &$\substack{M= \widetilde{T}^{(s)}_{k_1,\ldots,k_m-1,0,\ldots,0,\ldots,0,k_n}\times\\
\times \widetilde{T}^{(s-4)}_{k_1,\ldots,k_m,0,\ldots,0,\underset{l}1,0,\ldots,0,k_n}} $  &$\substack{M\prod_{0\leq j\leq r}A^{-1}_{n,s+4k_{n}-4j-6}, \\-1\leq r\leq k_{n}-1}$  \\
\hline %
(\ref{eqn5211}) & $\substack{\chi_{q}(\widetilde{\mathcal T}^{(s)}_{k_1,\ldots,k_m,0,\ldots,0,\underset{l}1,0,\ldots,0,k_n-1})\times\\
\times \chi_{q}(\widetilde{\mathcal T}^{(s-4)}_{k_1,\ldots,k_m-1,0,\ldots,0,\ldots,0,k_n+1})}$  &$\substack{M= \widetilde{T}^{(s)}_{k_1,\ldots,k_m,0,\ldots,0,\underset{l}1,0,\ldots,0,k_n-1}\times\\
\times \widetilde{T}^{(s-4)}_{k_1,\ldots,k_m-1,0,\ldots,0,\ldots,0,k_n+1}} $  &$\substack{M\prod_{0\leq j\leq r}A^{-1}_{n,s+4k_{n}-4j-6}, \\-1\leq r\leq k_{n}-2}$  \\
\hline %
(\ref{eqn5211}) & $\substack{\chi_{q}(\widetilde{\mathcal S}^{(s-4)}_{k_1,\ldots,k_m,0,\ldots,0,\underset{l}1,0,\ldots,0,2k_n,0})}$  &$\substack{M= \widetilde{S}^{(s-4)}_{k_1,\ldots,k_m,0,\ldots,0,\underset{l}1,0,\ldots,0,2k_n,0}} $  &$M$  \\
\hline %
(\ref{eqn5221}) & $\substack{\chi_{q}(\widetilde{\mathcal T}^{(s)}_{k_1,\ldots,k_m,0,\ldots,0,k_l-2,0,\ldots,0,k_n})\times\\
\times \chi_{q}(\widetilde{\mathcal T}^{(s-4)}_{k_1,\ldots,k_m,0,\ldots,0,k_l,0,\ldots,0,k_n})}$  &$\substack{M= \widetilde{T}^{(s)}_{k_1,\ldots,k_m,0,\ldots,0,k_l-2,0,\ldots,0,k_n}\times\\
\times \widetilde{T}^{(s-4)}_{k_1,\ldots,k_m,0,\ldots,0,k_l,0,\ldots,0,k_n}} $  &$\substack{M\prod_{0\leq j\leq r}A^{-1}_{n,s+4k_{n}-4j-6}, \\-1\leq r\leq k_{n}-1}$  \\
\hline %
(\ref{eqn5221}) & $\substack{\chi_{q}(\widetilde{\mathcal T}^{(s)}_{k_1,\ldots,k_m,0,\ldots,0,k_l,0,\ldots,0,k_n-1})\times\\
\times \chi_{q}(\widetilde{\mathcal T}^{(s-4)}_{k_1,\ldots,k_m,0,\ldots,0,k_l-2,0,\ldots,0,k_n+1})}$  &$\substack{M= \widetilde{T}^{(s)}_{k_1,\ldots,k_m,0,\ldots,0,k_l,0,\ldots,0,k_n-1}\times\\
\times \widetilde{T}^{(s-4)}_{k_1,\ldots,k_m,0,\ldots,0,k_l-2,0,\ldots,0,k_n+1}} $  &$\substack{M\prod_{0\leq j\leq r}A^{-1}_{n,s+4k_{n}-4j-6}, \\-1\leq r\leq k_{n}-2}$  \\
\hline %
(\ref{eqn5221}) & $\substack{\chi_{q}(\widetilde{\mathcal S}^{(s-4)}_{k_1,\ldots,k_m,0,\ldots,0,k_l,0,\ldots,0,2k_n,0})}$  &$\substack{M= \widetilde{S}^{(s-4)}_{k_1,\ldots,k_m,0,\ldots,0,k_l,0,\ldots,0,2k_n,0}} $  &$M$  \\
\hline %
\end{tabular} }
\caption{Classification of dominant monomials in the system in Theorem \ref{M system of type Cn}.}
\label{dominant monomials in the M-system of type C_n}
\end{table}

\subsection{Proof of Theorem \ref{M system of type Cn}}
By Table \ref{dominant monomials in the M-system of type C_n}, the dominant monomials in the $q$-characters of the left hand side and of the right hand side of every equation in Theorem \ref{M system of type Cn} are the same. Therefore  Theorem \ref{M system of type Cn} is true.

\subsection{Proof of Lemma \ref{lemma1}} \label{proof classification of dominant monomials}

We prove the case of
\begin{align*}
\chi_{q}(\widetilde{\mathcal T}^{(s)}_{k_1,\ldots,k_m,0,\ldots,0,k_{n-1}-2,k_{n}})\chi_{q}(\widetilde{\mathcal T}^{(s-4)}_{k_1,\ldots,k_m,0,\ldots,0,k_{n-1},k_{n}}),
\end{align*}
where $k_{n-1}\geq2, ~1\leq m\leq n-2$ and $0\leq k_1+\cdots+k_m\leq2$. The other cases are similar. Without loss of generality, suppose that $k_{n-2}=k_{n-3}=1$ and $s=0$.
\vskip 0.05cm
Let
\begin{align*}
&m_{1}'=\widetilde{T}^{(0)}_{0,\ldots,0,1,1,k_{n-1}-2,k_{n}}, \\
&m_{2}'=\widetilde{T}^{(-4)}_{0,\ldots,0,1,1,k_{n-1},k_{n}}.
\end{align*}

Then
\begin{equation*}
\begin{split}
m_{1}'=&n_{0}n_{4}\cdots n_{4k_n-4}(n-1)_{4k_n+1}(n-1)_{4k_n+3}\cdots (n-1)_{4k_n+2k_{n-1}-5}\\
       &(n-2)_{4k_n+2k_{n-1}-2}(n-3)_{4k_n+2k_{n-1}+1},\\
m_{2}'=&n_{-4}n_{0}\cdots n_{4k_n-8}(n-1)_{4k_n-3}(n-1)_{4k_n-1}\cdots (n-1)_{4k_n+2k_{n-1}-5}\\
       &(n-2)_{4k_n+2k_{n-1}-2}(n-3)_{4k_n+2k_{n-1}+1}.\\
\end{split}
\end{equation*}

Let $m=m_{1}m_{2}$ be a dominant monomial, where $m_{i}\in \chi_{q}(m_{i}')$, $i=1, 2$. We denote\\
\begin{equation*}
\begin{split}
m_{3}=&(n-1)_{4k_n+1}(n-1)_{4k_n+3}\cdots (n-1)_{4k_n+2k_{n-1}-5}(n-2)_{4k_n+2k_{n-1}-2}(n-3)_{4k_n+2k_{n-1}+1},\\
m_{4}=&(n-1)_{4k_n-3}(n-1)_{4k_n-1}\cdots (n-1)_{4k_n+2k_{n-1}-5}(n-2)_{4k_n+2k_{n-1}-2}(n-3)_{4k_n+2k_{n-1}+1}.\\
\end{split}
\end{equation*}
Suppose that $m_{1}\in \chi_{q}(n_{0}\cdots n_{4k_n-4})(\chi_{q}(m_{3})-m_{3})$ , then $m=m_{1}m_{2}$ is right negative and hence $m$ is not dominant. This contradicts our assumption. Therefore $m_{1}\in \chi_{q}(n_{0}n_{4}\cdots n_{4k_n-4})m_{3}$. Similarly, if $m_{2}\in \chi_{q}(n_{-4}n_{0}\cdots n_{4k_n-8})(\chi_{q}(m_{4})-m_{4})$, then $m=m_{1}m_{2}$ is right negative and hence $m$ is not dominant. Therefore $m_{2}\in \chi_{q}(n_{-4}n_{0}\cdots n_{4k_n-8})m_{4}$.

Suppose that $m_{1}\in \mathscr{M}(\chi_q(m_{1}'))\cap \mathscr{M}(\chi_{q}(n_{0}n_{4}\cdots n_{4k_n-8})(\chi_{q}(n_{4k_{n}-4})-n_{4k_{n}-4})m_{3})$. By the Frenkel-Mukhin algorithm for $\chi_q(m_{1}')$, $m_{1}$ must have the factor $n_{4k_{n}}^{-1}$. But by the Frenkel-Mukhin algorithm and the fact that $m_{2}\in \chi_{q}(n_{-4}n_{0}\cdots n_{4k_n-8})m_{4}$, $m_{2}$ does not have the factor $n_{4k_{n}}$. Therefore $m_{1}m_{2}$ is not dominant. Hence $m_{1}\in \chi_q(n_{0}n_{4}\cdots n_{4k_n-8})n_{4k_{n}-4}m_{3}$. It follows that $m_{1}=m_{1}'$.

By the Frenkel-Mukhin algorithm and the fact that $m_{2}\in \chi_{q}(n_{-4}n_{0}\cdots n_{4k_n-8})m_{4}$, $m_{2}$ must be one of the following monomials,

\begin{equation*}
\begin{split}
n_{1}=&m_{2}'A_{n,4k_{n}-6}^{-1}\\
     =&n_{-4}n_{0}\cdots n_{4k_n-12} n^{-1}_{4k_n-4}(n-1)_{4k_n-7}(n-1)_{4k_n-5}\\
      &(n-1)_{4k_n-3}(n-1)_{4k_n-1}\cdots (n-1)_{4k_n+2k_{n-1}-5}(n-2)_{4k_n+2k_{n-1}-2}(n-3)_{4k_n+2k_{n-1}+1},\\
n_{2}=&m_{2}'A_{n,4k_{n}-6}^{-1}A_{n,4k_{n}-10}^{-1}\\
     =&n_{-4}n_{0}\cdots n_{4k_n-16}n^{-1}_{4k_n-8}n^{-1}_{4k_n-4}(n-1)_{4k_n-11}(n-1)_{4k_n-9}(n-1)_{4k_n-7}(n-1)_{4k_n-5}\\
      &(n-1)_{4k_n-3}(n-1)_{4k_n-1}\cdots (n-1)_{4k_n+2k_{n-1}-5}(n-2)_{4k_n+2k_{n-1}-2}(n-3)_{4k_n+2k_{n-1}+1},\\
      &\cdots \\
n_{k_n}=&m_{2}'A_{n,4k_{n}-6}^{-1}A_{n,4k_{n}-10}^{-1}\cdots A_{n,-2}^{-1}\\
     =&n^{-1}_0 n^{-1}_4\cdots n^{-1}_{4k_n-4}(n-1)_{-3} (n-1)_{-1}\cdots (n-1)_{4k_n+2k_{n-1}-5}(n-2)_{4k_n+2k_{n-1}-2}(n-3)_{4k_n+2k_{n-1}+1}.\\
\end{split}
\end{equation*}

It follows that the dominant monomials in $\chi_{q}(\widetilde{\mathcal T}^{(0)}_{0,\ldots,0,1,1,k_{n-1}-2,k_{n}})\chi_{q}(\widetilde{\mathcal T}^{(-4)}_{0,\ldots,0,1,1,k_{n-1},k_{n}})$ are
\begin{eqnarray*}
\begin{split}
&M=m_{1}'m_{2}', \ M_{1}=n_{1}m_{1}'=MA_{n,4k_{n}-6}^{-1}, \ M_{2}=n_{2}m_{1}'=M\prod_{i=0}^{1}A_{n, 4k_{n}-4i-6}^{-1},\ldots, \\
&M_{k_n-1}=n_{k_n-1}m_{1}'=M\prod_{i=0}^{k_n-2}A_{n, 4k_{n}-4i-6}^{-1}, \ M_{k_n}=n_{k_n}m_{1}'=M\prod_{i=0}^{k_n-1}A_{n, 4k_{n}-4i-6}^{-1}.
\end{split}
\end{eqnarray*}

\section{Proof of Theorem $\ref{irreducible}$}  \label{proof irreducible}

By Lemma \ref{lemma1}, we have the following result.
\begin{corollary}
The modules in the second summand on the right hand side of every equation in Theorem \ref{M system of type Cn} are special. In particular, they are simple.
\end{corollary}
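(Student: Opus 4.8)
The plan is to read the corollary off directly from the classification in Lemma \ref{lemma1}. Recall that the right-hand side of each equation in Theorem \ref{M system of type Cn} has the shape $[\mathcal T_3][\mathcal T_4]+(\text{second summand})$, where the second summand is either a single class $[\mathcal S]$ (as in (\ref{eqn5211}) and (\ref{eqn5221})) or a product $[\mathcal S_1][\mathcal S_2]$ of two classes (as in (\ref{eqn1})--(\ref{eqn4}), (\ref{eqn511}), (\ref{eqn512})). In every case, the corresponding entry of Table \ref{dominant monomials in the M-system of type C_n} records that the relevant $q$-character --- namely $\chi_q(\mathcal S)$, respectively the product $\chi_q(\mathcal S_1)\chi_q(\mathcal S_2)$ --- contains a \emph{single} dominant monomial $M$ with coefficient $1$; explicitly $M=S$ in the first case and $M=S_1S_2$ in the second. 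So the whole argument is to convert "the product of $q$-characters has one dominant monomial" into "the second summand is a special, hence simple, module".

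The single-module case is immediate: $\mathscr M(\mathcal S)$ has exactly one dominant monomial, so $\mathcal S$ is special by definition, and special modules are simple. For the product case I would pass to the Grothendieck ring $\rep(U_q\hat{\mathfrak g})$, whose $\mathbb Z$-basis is $\{[L(m)] : m\in\mathcal P^+\}$. Since $\chi_q(\mathcal S_1)\chi_q(\mathcal S_2)=\chi_q(\mathcal S_1\otimes\mathcal S_2)$, and since by Lemma \ref{contains in a larger set} any simple constituent $L(m)$ of $\mathcal S_1\otimes\mathcal S_2$ satisfies $\mathscr M(L(m))\subseteq\mathscr M(\chi_q(\mathcal S_1)\chi_q(\mathcal S_2))$ with $m$ dominant, the decomposition $[\mathcal S_1][\mathcal S_2]=\sum_m c_m[L(m)]$ is governed by the dominant monomials of the product. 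The maximal monomial $M=S_1S_2$ contributes $[L(M)]$ with $c_M=1$, and every further constituent would force an additional dominant monomial $m<M$ into $\mathscr M(\chi_q(\mathcal S_1)\chi_q(\mathcal S_2))$. By Lemma \ref{lemma1} no such $m$ exists, so $[\mathcal S_1][\mathcal S_2]=[L(M)]$; that is, $\mathcal S_1\otimes\mathcal S_2\cong L(M)$ is simple. Then $\chi_q(L(M))=\chi_q(\mathcal S_1)\chi_q(\mathcal S_2)$ has the unique dominant monomial $M$, so $L(M)$ is special, which is exactly the assertion for these equations.

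The only nonroutine point --- and the step I expect to be the main obstacle to state cleanly --- is the passage "unique dominant monomial in the product $\Rightarrow$ $\mathcal S_1\otimes\mathcal S_2=L(M)$ irreducible". This rests on the standard description of $\rep(U_q\hat{\mathfrak g})$ via the basis $\{[L(m)]\}_{m\in\mathcal P^+}$ (see \cite{FR98}), together with the fact that in $[\mathcal S_1][\mathcal S_2]=\sum_m c_m[L(m)]$ the indices $m$ are dominant monomials $\le S_1S_2$. I would also check the multiplicity claim $c_{S_1S_2}=1$: because $S_1S_2$ is the product of the two highest weight monomials, the only pair $(m_1,m_2)\in\mathscr M(\mathcal S_1)\times\mathscr M(\mathcal S_2)$ with $m_1m_2=S_1S_2$ is $(S_1,S_2)$ (using $m_i\le S_i$ and the order (\ref{partial order of monomials})), and each highest weight monomial occurs with coefficient $1$. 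Everything remaining is finite bookkeeping: one verifies that the last column of Table \ref{dominant monomials in the M-system of type C_n} lists a single monomial $M$ for the second summand of each of the equation types (\ref{eqn1})--(\ref{eqn5221}), which Lemma \ref{lemma1} already supplies, so the corollary follows uniformly in all cases.
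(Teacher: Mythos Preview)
Your proposal is correct and follows essentially the same approach as the paper: the paper's entire proof is the single phrase ``By Lemma \ref{lemma1}'', and you have correctly unpacked what that means, namely that Table \ref{dominant monomials in the M-system of type C_n} records a unique dominant monomial $M$ for each second summand, from which speciality (and hence simplicity) follows directly in the single-module case and via the standard ``one dominant monomial in the product $\Rightarrow$ the tensor product is simple'' argument in the two-module case. Your added justification of the step $c_{S_1S_2}=1$ and the decomposition argument in $\rep(U_q\hat{\mathfrak g})$ is more detail than the paper gives, but it is exactly the mechanism the authors are invoking.
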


Therefore in order to prove Theorem \ref{irreducible}, we only need to prove that the modules in the first summand on the right hand side of every equation of the system in Theorem \ref{M system of type Cn} are simple. We will prove that
\begin{align} \label{one case of type An proof irreducible}
\chi_{q}(\widetilde{\mathcal T}^{(s)}_{k_1,\ldots,k_m,0,\ldots,0,k_{n-1},k_{n}-1})\chi_{q}(\widetilde{\mathcal T}^{(s-4)}_{k_1,\ldots,k_m,0,\ldots,0,k_{n-1}-2,k_{n}+1}),
\end{align}
where $k_{n-1}\geq2, ~1\leq m\leq n-2$ and $0\leq k_1+\cdots+k_m\leq2$, is simple. The other cases are similar.

The following is the proof of the case of (\ref{one case of type An proof irreducible}) in type $C_n$. Without loss of generality, suppose that $k_{n-2}=k_{n-3}=1$. We have
\begin{align} \label{the expression of T 1}
\begin{split}
&\widetilde{T}^{(s)}_{0,\ldots,0,1,1,k_{n-1},k_{n}-1} \\
=& n_{s}n_{s+4}\cdots n_{s+4k_{n}-8} (n-1)_{s+4k_{n}-3} (n-1)_{s+4k_{n}-1} \cdots (n-1)_{s+4k_{n}+2k_{n-1}-5}\\ &(n-2)_{s+4k_{n}+2k_{n-1}-2}(n-3)_{s+4k_{n}+2k_{n-1}+1},
\end{split}
\end{align}
\begin{align} \label{the expression of T 2}
\begin{split}
&\widetilde{T}^{(s-4)}_{0,\ldots,0,1,1,k_{n-1}-2,k_{n}+1} \\
=& n_{s-4}n_{s}\cdots n_{s+4k_{n}-8}n_{s+4k_{n}-4}(n-1)_{s+4k_{n}+1} (n-1)_{s+4k_{n}+3}\cdots(n-1)_{s+4k_{n}+2k_{n-1}-5}\\
 &(n-2)_{s+4k_n+2k_{n-1}-2}(n-3)_{s+4k_n+2k_{n-1}+1}.
\end{split}
\end{align}

By Lemma \ref{lemma1}, the dominant monomials in (\ref{one case of type An proof irreducible}) are
\begin{align*}
M_r = M\prod_{0\leq j\leq r}A^{-1}_{n,s+4k_{n}-4j-6},\ -1\leq r\leq k_{n}-2.
\end{align*}
We need to show that $\mathscr{M}(\chi_{q}(M_{r}))\nsubseteq \mathscr{M}(\chi_{q}(M))$ for $0\leq r\leq k_{n}-2$. We will prove the case of $\mathscr{M}(\chi_{q}(M_{0}))\nsubseteq \mathscr{M}(\chi_{q}(M))$. The other cases are similar.
\begin{align*}
M_{0}=&MA^{-1}_{n,s+4k_{n}-6}\\
 =&n^{-1}_{s+4k_{n}-4}(n-1)_{s+4k_{n}-7}(n-1)_{s+4k_{n}-5} n_{s}n_{s+4}\cdots n_{s+4k_{n}-12}\\
&(n-1)_{s+4k_{n}-3}(n-1)_{s+4k_{n}-1}\cdots (n-1)_{s+4k_{n}+2k_{n-1}-5}\\
&n_{s-4}n_{s}\cdots n_{s+4k_{n}-4} (n-1)_{s+4k_{n}+1}\cdots (n-1)_{s+4k_{n}+2k_{n-1}-5}\\ &(n-2)_{s+4k_n+2k_{n-1}-2}(n-2)_{s+4k_n+2k_{n-1}-2}(n-3)_{s+4k_n+2k_{n-1}+1}(n-3)_{s+4k_n+2k_{n-1}+1} \\
=&(n-1)_{s+4k_{n}-7}(n-1)_{s+4k_{n}-5} n_{s}n_{s+4}\cdots n_{s+4k_{n}-12}\\
&(n-1)_{s+4k_{n}-3}(n-1)_{s+4k_{n}-1}\cdots (n-1)_{s+4k_{n}+2k_{n-1}-5}\\
&n_{s-4}n_{s}\cdots n_{s+4k_{n}-8} (n-1)_{s+4k_{n}+1}\cdots (n-1)_{s+4k_{n}+2k_{n-1}-5}\\ &(n-2)_{s+4k_n+2k_{n-1}-2}(n-2)_{s+4k_n+2k_{n-1}-2}(n-3)_{s+4k_n+2k_{n-1}+1}(n-3)_{s+4k_n+2k_{n-1}+1} .
\end{align*}
We will show that $\mathscr{M}(\chi_{q}(M_{0}))\nsubseteq \mathscr{M}(\chi_{q}(M))$. By $U_q\mathfrak{sl}_2$ argument, the monomial
\begin{align*}
n_{1}=&(n-1)_{s+4k_{n}-7}(n-1)_{s+4k_{n}-5} n_{s}n_{s+4}\cdots n_{s+4k_{n}-12}\\
&(n-1)_{s+4k_{n}-3}(n-1)_{s+4k_{n}-1}\cdots (n-1)_{s+4k_{n}+2k_{n-1}-5}\\
&n_{s-4}n_{s}\cdots n_{s+4k_{n}-12}  n^{-1}_{s+4k_{n}-4}(n-1)_{s+4k_{n}-7}(n-1)_{s+4k_{n}-5}\\
&(n-1)_{s+4k_{n}+1}\cdots (n-1)_{s+4k_{n}+2k_{n-1}-5} (n-2)_{s+4k_n+2k_{n-1}-2}\\
&(n-2)_{s+4k_n+2k_{n-1}-2}(n-3)_{s+4k_n+2k_{n-1}+1}(n-3)_{s+4k_n+2k_{n-1}+1}\\
=&(n-1)^{2}_{s+4k_{n}-7}(n-1)^{2}_{s+4k_{n}-5} n_{s}n_{s+4}\cdots n_{s+4k_{n}-12}\\
&(n-1)_{s+4k_{n}-3}(n-1)_{s+4k_{n}-1}\cdots (n-1)_{s+4k_{n}+2k_{n-1}-5}\\
&n_{s-4}n_{s}\cdots n_{s+4k_{n}-12}n^{-1}_{s+4k_{n}-4}(n-1)_{s+4k_{n}+1}\cdots (n-1)_{s+4k_{n}+2k_{n-1}-5}\\ &(n-2)_{s+4k_n+2k_{n-1}-2}(n-2)_{s+4k_n+2k_{n-1}-2}(n-3)_{s+4k_n+2k_{n-1}+1}(n-3)_{s+4k_n+2k_{n-1}+1} \\
=&M_{0}A^{-1}_{n,s+4k_{n}-6}\\
=&MA^{-2}_{n,s+4k_{n}-6}
\end{align*}
is in $\chi_{q}(M_{0})$.

Suppose that $n_{1}\in \chi_{q}(\widetilde{\mathcal T}^{(s)}_{0,\ldots,0,1,1,k_{n-1},k_{n}-1})\chi_{q}(\widetilde{\mathcal T}^{(s-4)}_{0,\ldots,0,1,1,k_{n-1}-2,k_{n}+1})$. Then $n_1=m_{1}m_{2}$, where
\begin{align*}
 m_1 \in \chi_{q}(\widetilde{\mathcal T}^{(s)}_{0,\ldots,0,1,1,k_{n-1},k_{n}-1}), \quad
 m_2 \in \chi_{q}(\widetilde{\mathcal T}^{(s-4)}_{0,\ldots,0,1,1,k_{n-1}-2,k_{n}+1}).
\end{align*}
Since $n_1=MA^{-2}_{n,s+4k_{n}-6}$, by the expressions (\ref{the expression of T 1}) and (\ref{the expression of T 2}) we must have
\begin{align*}
m_1 = \widetilde{T}^{(s)}_{0,\ldots,0,1,1,k_{n-1},k_{n}-1} A^{-1}_{n,s+4k_{n}-6}.
\end{align*}
It follows that $m_{2} = \widetilde{T}^{(s-4)}_{0,\ldots,0,1,1,k_{n-1}-2,k_{n}+1} A^{-1}_{n,s+4k_{n}-6}$. But by the Frenkel-Mukhin algorithm and (\ref{the expression of T 2}), $\widetilde{T}^{(s-4)}_{0,\ldots,0,1,1,k_{n-1}-2,k_{n}+1} A^{-1}_{n,s+4k_{n}-6}$ is not in $\chi_{q}(\mathcal T^{(s-4)}_{0,\ldots,0,1,1,k_{n-1}-2,k_{n}+1})$. This is a contradiction. Hence $\mathscr{M}(\chi_{q}(M_{0}))\nsubseteq \mathscr{M}(\chi_{q}(M))$.

\section*{Acknowledgements}
This research is supported by the National Natural Science Foundation of China (no. 11371177, 11401275), and the Fundamental Research Funds for the Central Universities of China (no. lzujbky-2015-78).

\end{document}